\patchcmd{\ttlh@hang}{\parindent\z@}{\parindent\z@\leavevmode}{}{}
\patchcmd{\ttlh@hang}{\noindent}{}{}{}
\newtheorem{thm}{Theorem}[section]
\newtheorem{lemma}[thm]{Lemma}
\newtheorem{prop}[thm]{Proposition}
\theoremstyle{definition}
\newtheorem{defin}[thm]{Definition}
\newtheorem{ex}[thm]{Example}
\newcommand{\N}[3]{\mathcal{#1}_{#2}^{#3}}
\numberwithin{equation}{section}
\author{\uppercase{\footnotesize{Riccardo W. Maffucci}}}
\title{\normalsize{\uppercase{\bf{Nodal intersections for arithmetic random waves against a surface}}}}
\date{}
\newcommand{\Addresses}{{
  \bigskip
  \footnotesize

  R.W.~Maffucci, \textsc{Mathematical Institute, University of Oxford, Woodstock Road Oxford OX2 6GG, UK}\par\nopagebreak
  \texttt{riccardo.maffucci@maths.ox.ac.uk}
}}
\begin{document}
\maketitle
\begin{abstract}
Given the ensemble of random Gaussian Laplace eigenfunctions on the three-dimensional torus (`3d arithmetic random waves'), we investigate the $1$-dimensional Hausdorff measure of the nodal intersection curve against a compact regular toral surface (the `nodal intersection length'). The expected length is proportional to the square root of the eigenvalue, times the surface area, independent of the geometry.

Our main finding is the leading asymptotic of the nodal intersection length variance, against a surface of nonvanishing Gauss-Kronecker curvature. The problem is closely related to the theory of lattice points on spheres: by the equidistribution of the lattice points, the variance asymptotic depends only on the geometry of the surface.
\end{abstract}
{\bf Keywords:} nodal intersections, arithmetic random waves, lattice points on spheres, Gaussian eigenfunctions, Kac-Rice formulas.
\\
{\bf MSC(2010):} 11P21, 60G60, 58C40.

\tableofcontents

\section{Introduction}
\subsection{Nodal intersections for Laplace eigenfunctions}
The {\em nodal set} of a function is its zero-locus. Several recent works (e.g. \cite{totzel,cantot,elhtot}) studied the number of intersections between the nodal lines of eigenfunctions and a fixed reference curve ({\em nodal intersections} on `generic' surfaces). It is expected that in many situations the nodal intersections number obeys the bound $\ll\sqrt{E}$, where $E>0$ is the eigenvalue.

On the three-dimensional standard flat torus $\mathbb{T}^3=\mathbb{R}^3/\mathbb{Z}^3$, the non-zero Laplace eigenvalues, or {\em energy levels}, are of the form $4\pi^2 m$, $m\in S^{(3)}$, where
\begin{equation*}
S^{(3)}:=\{0<m: \ m=a_1^2+a_2^2+a_3^2,\  a_i\in\mathbb{Z}\}.
\end{equation*}
Let \begin{equation}
\label{E}
\mathcal{E}=\mathcal{E}_m:=\{\mu=(\mu^{(1)},\mu^{(2)},\mu^{(3)})\in\mathbb{Z}^3 : ({\mu^{(1)}})^2+({\mu^{(2)}})^2+({\mu^{(3)}})^2=m\}
\end{equation}
be the set of all lattice points on the sphere of radius $\sqrt{m}$. The (complex-valued) Laplace eigenfunctions may be written as \cite{brnoda}
\begin{equation*}
G(x)=G_m(x)=\sum_{\mu\in\mathcal{E}}
c_{\mu}
e^{2\pi i\langle\mu,x\rangle},
\qquad x\in\mathbb{T}^3,
\end{equation*}
with $c_\mu$ Fourier coefficients.

A natural number $m$ is representable as a sum of three integer squares if and only if $m\neq 4^l(8k+7)$, for $k,l$ non-negative integers \cite{harwri, daven1}. The lattice points number, or equivalently the number of ways to express $m$ as a sum of three squares will be denoted
\begin{equation*}
N=N_m:=|\mathcal{E}|=r_3(m).
\end{equation*}
Under the assumption $m\not\equiv 0,4,7 \pmod 8$, one has
\begin{equation}
\label{totnumlp3}
(\sqrt{m})^{1-\epsilon}
\ll
N
\ll
(\sqrt{m})^{1+\epsilon}
\end{equation}
for all $\epsilon>0$ \cite[section 1]{bosaru}. The condition $m\not\equiv 0,4,7 \pmod 8$, ensuring $N_m\to\infty$, is natural \cite[section 1.3]{ruwiye}: indeed, if $m\equiv 7 \pmod 8$, then $m\not\in S^{(3)}$; on the other hand,
\begin{equation*}
\mathcal{E}_{4m}=\{2\mu : \mu\in\mathcal{E}_{m}\}
\end{equation*}
(see e.g. \cite[\S 20]{harwri}), hence it suffices to consider energies $4\pi^2 m$ for $m$ taken up to multiples of $4$ (see section \ref{seclp} for details).

It is known \cite{cheng1, rudwi2} that the nodal set
\begin{equation*}
\mathcal{A}_G:=\{x\in\mathbb{T}^3 : G(x)=0\}
\end{equation*}
is a union of smooth surfaces, possibly together with a set of lower dimension (curves and points). Let $\Sigma\subset \mathbb{T}^3$ be a fixed smooth reference surface. Consider the intersection
\begin{equation*}
\mathcal{A}_G \cap \Sigma,
\end{equation*}
in the limit $m\to\infty$. Bourgain and Rudnick \cite{brnoda, brgafa} found that, for $\Sigma$ real-analytic, with nowhere zero Gauss-Kronecker curvature, there exists $m_\Sigma$ such that for every $m\geq m_\Sigma$, the following hold:
\begin{itemize}
\item
the surface $\Sigma$ is not contained in the nodal set of any eigenfunction $G_m$ \cite[Theorem 1.2]{brnoda};
\item
the $1$-dimensional Hausdorff measure of the intersection has the upper bound
\begin{equation}
\label{BRthm}
h_1(\mathcal{A}_{G_m} \cap \Sigma)<C_\Sigma\cdot\sqrt{m}
\end{equation}
for some constant $C_\Sigma$ \cite[Theorem 1.1]{brgafa};
\item
for every eigenfunction $G_m$,
\begin{equation*}
\mathcal{A}_{G_m} \cap \Sigma\neq\emptyset
\end{equation*}
\cite[Theorem 1.3]{brgafa}.
\end{itemize}

\subsection{Arithmetic random waves}
\label{secarw}
We work with an ensemble of {\em random} Gaussian Laplace toral eigenfunctions (`arithmetic random waves' \cite{orruwi, rudwi2, krkuwi})
\begin{equation}
\label{arw}
F(x)={F_m^{(3)}}(x)=\frac{1}{\sqrt{N}}
\sum_{(\mu^{(1)},\mu^{(2)},\mu^{(3)})\in\mathcal{E}}
a_{\mu}
e^{2\pi i\langle\mu,x\rangle},
\qquad x\in\mathbb{T}^3,
\end{equation}
with eigenvalue $4\pi^2m$, where $a_{\mu}$ are complex standard Gaussian random variables \footnote{Defined on some probability space $(\Omega,\mathcal{F},\mathbb{P})$, where $\mathbb{E}$ denotes
the expectation with respect to $\mathbb{P}$.} (i.e., we have $\mathbb{E}[a_{\mu}]=0$ and $\mathbb{E}[|a_{\mu}|^2]=1$), independent save for the relations $a_{-\mu}=\overline{a_{\mu}}$ (so that $F(x)$ is real valued). Several recent works \cite{ruwiye,maff3d,benmaf,cammar} study the fine properties of the nodal set of \eqref{arw}. Our object of investigation is the following.
\begin{defin}
	\label{def}
Let $\Sigma$ be a fixed compact {\em regular} \footnote{i.e. a smooth $2$-dimensional submanifold of $\mathbb{T}^3$, possibly with boundary. For background on regular surfaces, see e.g. \cite{docarm, serne2}.} toral surface, of (finite) area $A$. Assume that $\Sigma$ admits a smooth normal vector locally. We define the {\bf nodal intersection length for arithmetic random waves}
\begin{equation}
\label{L}
\mathcal{L}=\mathcal{L}_m:=h_1(\mathcal{A}_{F_m} \cap \Sigma),
\end{equation}
where $h_1$ is $1$-dimensional Hausdorff measure.
\end{defin}
We will study $\mathcal{L}$ in the high energy limit $m\to\infty$.

\subsection{Prior work on nodal intersections for random waves}
\label{prior}
Denote
\begin{equation*}
S^{(d)}:=\{0<m: \ m=a_1^2+\dots+a_d^2,\  a_i\in\mathbb{Z}\}
\end{equation*}
the set of natural numbers expressible as a sum of $d$ squares. For $m\in S^{(d)}$, let
\begin{equation*}
{\mathcal{E}_m^{(d)}}:=\{\mu\in\mathbb{Z}^d : |\mu|^2=m\}
\end{equation*}
be the set of lattice points on the sphere (or circle for $d=2$)
\begin{equation*}
\sqrt{m}\mathcal{S}^{d-1}=\{y\in\mathbb{R}^d : |y|=\sqrt{m}\},
\end{equation*}
of cardinality $N_m^{(d)}=|\mathcal{E}_m^{(d)}|$. Consider the ensemble of arithmetic random waves
\begin{equation}
\label{arwd}
F_m^{(d)}(x)=\frac{1}{\sqrt{N_m^{(d)}}}
\sum_{\mu\in{\mathcal{E}_m^{(d)}}}
a_{\mu}
e^{2\pi i\langle\mu,x\rangle}
\end{equation}
defined on the $d$-dimensional torus $\mathbb{T}^d=\mathbb{R}^d/\mathbb{Z}^d$, with eigenvalue $4\pi^2m$, and the $a_\mu$'s defined as in section \ref{secarw}.

Fix a smooth reference curve $\mathcal{C}\subset\mathbb{T}^d$ and consider the number of nodal intersections
\begin{equation}
\label{Zd}
\mathcal{Z}=\N{Z}{m}{(d)}(F)=|\{x : F(x)=0\} \cap \mathcal{C}|
\end{equation}
as $m\to\infty$. Rudnick-Wigman \cite{rudwig} and subsequently Rossi-Wigman \cite{roswig} and the author \cite{maff2d} investigated $\N{Z}{m}{(2)}$. Rudnick-Wigman-Yesha \cite{ruwiye} and subsequently the author \cite{maff3d} studied the three-dimensional analogue $\N{Z}{m}{(3)}$. We may view the nodal intersection length $\mathcal{L}_m$ as another three-dimensional analogue of $\N{Z}{m}{(2)}$: indeed, in both cases one considers nodal intersections against a linear manifold.

The expected number of nodal intersections against smooth curves $\mathcal{C}$ of length $L$ is \cite{rudwig, ruwiye}
\begin{equation}
\label{ninexpect}
\mathbb{E}[\N{Z}{m}{(d)}]=\frac{2}{\sqrt{d}}\sqrt{m}\cdot L,
\end{equation}
consistent with \eqref{BRthm}, independent of the geometry. Let $d=2$: as $m\to\infty$, the number of lattice points on $\sqrt{m}\mathcal{S}^1$ satisfies
\begin{equation*}
N_m^{(2)}\ll m^\epsilon \qquad \forall\epsilon>0,
\end{equation*}
and $N\to\infty$ for a density one sequence of energy levels. Let the curve $\mathcal{C}\subset\mathbb{T}^2$ be smooth, of nowhere zero curvature, with arc-length parametrisation given by $\gamma:[0,L]\to\mathcal{C}$. Rudnick-Wigman found the precise asymptotic behaviour \cite[Theorem 1.2]{rudwig}
\begin{equation}
\label{ninvar}
\text{Var}(\N{Z}{m}{(2)})=
(4B_{\mathcal{C}}(\mathcal{E})-L^2)
\cdot
\frac{m}{N_m}
+
O\bigg(\frac{m}{N_m^{3/2}}\bigg)
\end{equation}
where
\begin{equation}
\label{bce}
B_{\mathcal{C}}(\mathcal{E}):=
\int_{\mathcal{C}}
\int_{\mathcal{C}}
\frac{1}{N_m}
\sum_{\mu\in\mathcal{E}}
\bigg\langle\frac{\mu}{|\mu|},\dot{\gamma}(t_1)\bigg\rangle^2
\cdot
\bigg\langle\frac{\mu}{|\mu|},\dot{\gamma}(t_2)\bigg\rangle^2
dt_1dt_2.
\end{equation}
This asymptotic behaviour is non-universal: $B_{\mathcal{C}}(\mathcal{E})$ depends both on $\mathcal{C}$ and on the angular distribution of the lattice points on $\sqrt{m}\mathcal{S}^1$, as $m\to\infty$. A nice consequence of \eqref{ninexpect} and \eqref{ninvar} is that the distribution of $\mathcal{Z}$ is asymptotically concentrated at the mean value, i.e., for all $\epsilon>0$,
\begin{equation*}
\lim_{\substack{m\to\infty\\\text{s.t. } N\to\infty}}\mathbb{P}\left(\left|\frac{\N{Z}{m}{(2)}(F)}{\sqrt{2m}L}-1\right|>\epsilon\right)=0.
\end{equation*}
The leading coefficient in \eqref{ninvar} is always non-negative and bounded \cite[sections 1 and 7]{rudwig}:
\begin{equation*}
0\leq 4B_{\mathcal{C}}(\mathcal{E})-L^2\leq L^2,
\end{equation*}
though it might vanish, for instance when $\mathcal{C}$ is a circle, independent of $\mathcal{E}$.

Rossi-Wigman \cite{roswig} investigated the scenario of curves such that $4B_{\mathcal{C}}(\mathcal{E})-L^2$ vanishes universally (`static curves'). For a density one sequence of energies, the precise asymptotics of the variance in the case of static curves are \cite[Theorem 1.3]{roswig}
\begin{equation}
\label{precasym2}
\text{Var}(\N{Z}{m}{(2)})=
(16A_{\mathcal{C}}(\mathcal{E})-L^2)
\cdot
\frac{m}{4N_m^2}
\cdot
(1+o(1)),
\end{equation}
where $A_{\mathcal{C}}(\mathcal{E})$ depends both on $\mathcal{C}$ and on the limiting angular distribution of the lattice points. The leading term in \eqref{precasym2} is bounded away from zero \cite[Theorem 1.3]{roswig}.

\subsection{Statement of main results}
\label{secresults}
We now state our theorems on expectation and variance of the nodal intersection length \eqref{L}.
\begin{prop}
\label{explen}
Let $\Sigma\subset\mathbb{T}^3$ be a surface as in Definition \ref{def}, of area $A$. Then we have
\begin{equation}
\label{expect}
\mathbb{E}[\mathcal{L}]=\frac{\pi}{\sqrt{3}}\sqrt{m}\cdot A.
\end{equation}
\end{prop}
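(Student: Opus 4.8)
The plan is to identify the nodal intersection curve $\mathcal{A}_{F_m}\cap\Sigma$ with the zero set of the restricted field $f:=F_m|_\Sigma$, a smooth centred Gaussian field on the two-dimensional manifold $\Sigma$, and to compute the expected length of this zero set by the Kac-Rice formula. Writing $d\sigma$ for the surface area measure on $\Sigma$, the expected length takes the form
\begin{equation*}
\mathbb{E}[\mathcal{L}]=\int_\Sigma \mathbb{E}\bigl[\,\|\nabla_\Sigma f(p)\|\ \big|\ f(p)=0\,\bigr]\cdot p_{f(p)}(0)\,d\sigma(p),
\end{equation*}
where $\nabla_\Sigma f$ is the intrinsic (tangential) gradient and $p_{f(p)}$ is the density of the Gaussian variable $f(p)$. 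The first step is therefore to verify the non-degeneracy hypotheses under which this formula holds: that $f(p)$ has nonvanishing variance, that the gradient is almost surely non-zero along $\{f=0\}$, and that, by the Bourgain-Rudnick result quoted in \eqref{BRthm} and the surrounding discussion, $\Sigma$ is not contained in any nodal set, so that the intersection is almost surely a genuine $1$-dimensional curve.

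The second step is to compute the relevant Gaussian statistics at a single point. Using $\mathbb{E}[a_\mu a_\nu]=\delta_{\mu,-\nu}$ one finds that $F$ is stationary with covariance $r(z)=\tfrac{1}{N}\sum_{\mu\in\mathcal{E}}e^{2\pi i\langle\mu,z\rangle}$, so that $F(p)\sim N(0,1)$ and hence $p_{f(p)}(0)=1/\sqrt{2\pi}$. Differentiating $r$ and invoking the symmetries of $\mathcal{E}$ under coordinate permutations and sign changes, the second moments of the gradient satisfy
\begin{equation*}
\mathbb{E}\bigl[\partial_j F(p)\,\partial_k F(p)\bigr]=\frac{4\pi^2}{N}\sum_{\mu\in\mathcal{E}}\mu^{(j)}\mu^{(k)}=\frac{4\pi^2 m}{3}\,\delta_{jk},
\end{equation*}
since the off-diagonal sums cancel by the sign symmetry of $\mathcal{E}$, while each diagonal sum equals $\tfrac{1}{3}\sum_{\mu}|\mu|^2=Nm/3$. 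Moreover $\mathbb{E}[F(p)\,\partial_j F(p)]=0$, so $\nabla F(p)$ is an isotropic Gaussian vector with variance $\tfrac{4\pi^2 m}{3}$ per component, independent of $F(p)$.

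The third step exploits this isotropy to make the geometry disappear. Choosing an orthonormal frame $T_1,T_2$ of the tangent plane $T_p\Sigma$, the tangential gradient has squared norm $\|\nabla_\Sigma f(p)\|^2=\langle\nabla F,T_1\rangle^2+\langle\nabla F,T_2\rangle^2$; since the orthogonal projection of an isotropic Gaussian is again isotropic, these two components are independent $N\bigl(0,\tfrac{4\pi^2 m}{3}\bigr)$ regardless of the orientation of the tangent plane. The independence of $\nabla F$ from $F(p)$ removes the conditioning, so $\|\nabla_\Sigma f(p)\|$ follows a Rayleigh law with $\mathbb{E}\|\nabla_\Sigma f(p)\|=\sigma\sqrt{\pi/2}$, where $\sigma=2\pi\sqrt{m}/\sqrt{3}$. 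Substituting into the Kac-Rice integral, the integrand is constant in $p$, so the geometry enters only through the total area:
\begin{equation*}
\mathbb{E}[\mathcal{L}]=A\cdot\sigma\sqrt{\tfrac{\pi}{2}}\cdot\frac{1}{\sqrt{2\pi}}=\frac{A\sigma}{2}=\frac{\pi}{\sqrt{3}}\sqrt{m}\cdot A.
\end{equation*}

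The main obstacle is the rigorous justification of the Kac-Rice formula in this manifold setting rather than the algebra, which is short. One must confirm the almost-sure regularity of the zero set --- equivalently, that the pair $(f(p),\nabla_\Sigma f(p))$ is non-degenerate and that the expected-length formula applies uniformly over a finite atlas of coordinate charts covering $\Sigma$ --- and then check that the local parametrisations glue without affecting the parametrisation-independent intrinsic computation above. The cleanest route is to work chart by chart with the induced metric, expressing $\|\nabla_\Sigma f\|$ and $d\sigma$ intrinsically so that the isotropy of $\nabla F$ causes the Jacobian factors to cancel.
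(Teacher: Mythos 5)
Your proposal is correct and follows essentially the same route as the paper: a Kac-Rice formula for the restricted field on $\Sigma$ with the tangential gradient and surface measure, followed by the observation that $\nabla F(p)\sim\mathcal{N}(0,\tfrac{4\pi^2 m}{3}I_3)$ is isotropic and independent of $F(p)$, so its projection onto $T_p\Sigma$ is a two-dimensional isotropic Gaussian and the zero density is the constant $\tfrac{1}{\sqrt{2\pi}}\cdot\tfrac{2\pi\sqrt{m}}{\sqrt{3}}\sqrt{\pi/2}=\tfrac{\pi}{\sqrt{3}}\sqrt{m}$. The justification step you flag as the main obstacle is exactly what the paper supplies (a co-area/approximation argument with the surface gradient, a uniform bound $\mathcal{L}_\epsilon\leq 18\sqrt{m}$, and dominated convergence), and your warning to work with the intrinsic gradient and surface measure rather than a chart-projected field matches the paper's Example \ref{ex1}.
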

The proof of Proposition \ref{explen} will be given in section \ref{seckr}. Note that \eqref{expect} is consistent with \eqref{BRthm}, and that \eqref{expect} and \eqref{ninexpect} are of similar shape. In the statement of our next result, $\overrightarrow{n}(\sigma)$ is the unit normal vector to $\Sigma$ at the point $\sigma$.
\begin{thm}
\label{varthm}
Let $\Sigma\subset\mathbb{T}^3$ be a surface as in Definition \ref{def}, of area $A$, with nowhere vanishing Gauss-Kronecker curvature. Define
\begin{equation}
\label{I}
\mathcal{I}=\mathcal{I}_\Sigma:=\iint_{\Sigma^2}\langle\overrightarrow{n}(\sigma),\overrightarrow{n}(\sigma')\rangle^2d\sigma d\sigma'.
\end{equation}
Then we have as $m\to\infty$, $m\not\equiv 0,4,7 \pmod 8$,
\begin{equation}
\label{var}
\text{Var}(\mathcal{L})=\frac{\pi^2}{60}\frac{m}{N}\left[3\mathcal{I}-A^2+O\left(m^{-1/28+o(1)}\right)\right].
\end{equation}
\end{thm}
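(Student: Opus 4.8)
The plan is to realise $\mathcal L$ as the nodal length of the restricted field $f:=F|_\Sigma$, a smooth centred Gaussian field on the $2$-manifold $\Sigma$, and to compute its second moment by a Kac--Rice formula. Writing $\nabla_\Sigma f=P_\sigma\nabla F$ for the tangential gradient, where $P_\sigma=I-\overrightarrow{n}(\sigma)\overrightarrow{n}(\sigma)^{\mathsf T}$ is orthogonal projection onto the tangent plane, the coarea/Kac--Rice formula gives
\begin{equation*}
\mathbb{E}[\mathcal{L}^2]=\iint_{\Sigma^2}\mathbb{E}\!\left[\,|\nabla_\Sigma f(\sigma)|\,|\nabla_\Sigma f(\sigma')|\,\big|\,f(\sigma)=f(\sigma')=0\right]p_{(f(\sigma),f(\sigma'))}(0,0)\,d\sigma\,d\sigma'.
\end{equation*}
First I would record the one-point law: since $\mathcal E$ is symmetric under coordinate permutations and sign changes, $\nabla F(\sigma)$ is isotropic with covariance $\tfrac{4\pi^2 m}{3}I_3$ and independent of $f(\sigma)$, which recovers Proposition \ref{explen} and shows $P_\sigma\nabla F$ to be a $2$-dimensional isotropic Gaussian. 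The nowhere-vanishing Gauss--Kronecker curvature enters first to guarantee non-degeneracy of the two-point vector $(f(\sigma),f(\sigma'),\nabla_\Sigma f(\sigma),\nabla_\Sigma f(\sigma'))$ off the diagonal, so the integrand is well defined and integrable.

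The core of the argument is a Wiener--It\^o chaos expansion $\mathcal{L}=\mathbb E[\mathcal L]+\sum_{q\ge 2}\mathcal L[q]$; the odd chaoses vanish by the symmetry $a_{-\mu}=\overline{a_\mu}$, and I expect the second chaos to dominate. Projecting the integrand $\delta(f)\,|\nabla_\Sigma f|$ onto degree-$2$ Hermite polynomials, abbreviating $W:=\nabla F/(2\pi\sqrt{m/3})$ (a standard isotropic Gaussian vector) and $I_2(\sigma):=\tfrac18|P_\sigma W(\sigma)|^2-\tfrac14 f(\sigma)^2$, a short computation with the moments of $|P_\sigma W|$ yields
\begin{equation*}
\mathcal L[2]=2\pi\sqrt{m/3}\int_\Sigma I_2(\sigma)\,d\sigma .
\end{equation*}
Then $\mathrm{Var}(\mathcal L[2])=\tfrac{4\pi^2 m}{3}\iint_{\Sigma^2}\mathbb E[I_2(\sigma)I_2(\sigma')]\,d\sigma\,d\sigma'$, and by Wick's theorem the covariance of these quadratic forms splits into products of the elementary two-point covariances $r=\mathbb E[f(\sigma)f(\sigma')]$, $v_j=\mathbb E[W_j(\sigma)f(\sigma')]$, and $D_{jk}=\mathbb E[W_j(\sigma)W_k(\sigma')]=\tfrac{3}{mN}\sum_\mu \mu_j\mu_k\cos(2\pi\langle\mu,\sigma-\sigma'\rangle)$, each contracted against $P_\sigma,P_{\sigma'}$.

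Next I would evaluate $\iint_{\Sigma^2}$ of each such term. The surface integral $\int_\Sigma e^{2\pi i\langle\mu,\sigma\rangle}\,d\sigma\ll|\mu|^{-1}=m^{-1/2}$ by stationary phase (here the curvature hypothesis is again essential), so off-diagonal pairs $\mu\neq\pm\mu'$ are negligible and only $\mu'=\pm\mu$ survive. On the diagonal the directions $\theta=\mu/\sqrt m$ equidistribute on $\mathcal S^2$ (Duke's theorem, valid for $m\not\equiv0,4,7\pmod 8$), so $\tfrac1N\sum_\mu g(\theta)$ is replaced by $\tfrac1{4\pi}\int_{\mathcal S^2}g$. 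The fourth-moment tensor $\tfrac1{4\pi}\int_{\mathcal S^2}\theta_i\theta_j\theta_k\theta_l\,d\theta=\tfrac1{15}(\delta_{ij}\delta_{kl}+\delta_{ik}\delta_{jl}+\delta_{il}\delta_{jk})$ converts the $D$-contractions into $\tfrac25+\tfrac2{15}\langle\overrightarrow n(\sigma),\overrightarrow n(\sigma')\rangle^2$, while the $r^2$- and $v$-terms produce only $A^2$-multiples; assembling the coefficients $\tfrac1{32},-\tfrac1{16},-\tfrac1{16},\tfrac18$ collapses the $A^2$ pieces to total coefficient $-\tfrac1{80N}$ and gives $\iint_{\Sigma^2}\mathbb E[I_2 I_2']=\tfrac1{80N}(3\mathcal I-A^2)$, hence $\mathrm{Var}(\mathcal L[2])=\tfrac{\pi^2 m}{60N}(3\mathcal I-A^2)$, the claimed main term. (Writing $M=\int_\Sigma\overrightarrow n\,\overrightarrow n^{\mathsf T}\,d\sigma$, one has $3\mathcal I-A^2=3\|M\|_F^2-(\operatorname{tr}M)^2\ge0$, vanishing only for isotropic $\Sigma$, in parallel with the planar case.)

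The main obstacle is the quantitative control of the two omitted error sources. First, the off-diagonal terms require bounding lattice correlation sums $\tfrac1{N^2}\sum_{\mu\neq\pm\mu'}|\widehat{d\sigma}(\mu\pm\mu')|^2\ll \tfrac1{N^2}\sum_{\mu\neq\pm\mu'}|\mu\pm\mu'|^{-2}$, together with an effective rate in the equidistribution of $\{\mu/\sqrt m\}$; these arithmetic inputs (spherical-cap counts and spacing/correlation estimates for lattice points on $\sqrt m\,\mathcal S^2$) are what produce the explicit error exponent, and optimising them should give the stated $O(m^{-1/28+o(1)})$. Second, I would show that the higher chaoses $\sum_{q\ge4}\mathrm{Var}(\mathcal L[q])$ are of strictly smaller order and are comfortably absorbed into this error, using the full Kac--Rice second moment together with the same curvature and decorrelation estimates. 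Establishing the arithmetic bounds with the claimed savings is the crux; the Gaussian and geometric parts of the computation are, by contrast, exact.
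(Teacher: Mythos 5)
Your main-term computation is correct and lands exactly on the paper's answer: the degree-two projection $I_2=\tfrac18|P_\sigma W|^2-\tfrac14 f^2$, the Wick contractions, the fourth-moment tensor $\tfrac1{15}(\delta_{ij}\delta_{kl}+\delta_{ik}\delta_{jl}+\delta_{il}\delta_{jk})$ producing $\tfrac25+\tfrac2{15}\langle \overrightarrow{n},\overrightarrow{n}'\rangle^2$, and the resulting $\tfrac{\pi^2 m}{60N}(3\mathcal I-A^2)$ all check out against the paper, which obtains the same quantities by Taylor-expanding the scaled two-point function $K_{2;\Sigma}$ to second order: the integrand $\tfrac18 r^2+\tfrac1{16}\mathrm{tr}(X)+\tfrac1{16}\mathrm{tr}(X')+\tfrac1{32}\mathrm{tr}(Y'Y)$ of Proposition \ref{approxKR} is precisely the collection of Wick contractions of your $\mathbb E[I_2(\sigma)I_2(\sigma')]$. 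The difference is organisational: you package the leading term as $\mathrm{Var}(\mathcal L[2])$ in a Wiener--It\^o expansion, while the paper never introduces the chaos decomposition and instead controls the error in an approximate Kac--Rice formula by fourth moments of $r$ and its derivatives (Lemmas \ref{R4} and \ref{X2Y4}, resting on Lemma \ref{4mom4corr} and the spherical-cap bound). The chaos language buys a clean explanation of why the answer is a sum of pair contractions and why no order-four expansion is needed here (no Berry-type cancellation, in contrast with \cite{krkuwi,benmaf}); it buys no saving at all in the arithmetic, which you correctly identify as the crux.

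There are two genuine gaps. First, your plan for the tail $\sum_{q\ge4}\mathrm{Var}(\mathcal L[q])$ is circular as stated: to bound it ``using the full Kac--Rice second moment'' you must compute $\mathbb E[\mathcal L^2]$ to precision $o(m/N)$, which is exactly the content of Proposition \ref{approxKR} --- at which point the chaos expansion is redundant. The non-circular alternative (bounding each $\mathcal L[q]$, $q\ge4$, by diagram estimates) requires uniform control of all higher-order contractions and of the Hermite coefficients of $|V|$ together with summability in $q$; this is doable but considerably more delicate than ``comfortably absorbed.'' Second, you do not address the singular set, and your assertion that nonvanishing curvature guarantees non-degeneracy of $(f(\sigma),f(\sigma'))$ off the diagonal is false: there are pairs $\sigma\neq\sigma'$ with $r(\sigma,\sigma')=1$ (e.g. $\sigma-\sigma'=(1/2,1/2,0)$ when $m\equiv3\pmod 8$), independently of the geometry. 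The Kac--Rice integrand blows up like $(1-r^2)^{-1/2}$ near such pairs and near the diagonal, and the paper devotes Definition \ref{singset}, Lemma \ref{Sbound} and Lemma \ref{bdsingl} (a Cauchy--Schwarz argument over cells of side $\asymp m^{-1/2}$, contributing $O(m\,\mathcal R_4(m))$) to this region. Both points are fixable, but they are where the remaining work lies; the Gaussian and geometric computation you have carried out is the part of the proof that was never in doubt.
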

Theorem \ref{varthm} will be proven in section \ref{sec2mom}. Compare the expressions \eqref{bce} and \eqref{I}: while the integral $B_{\mathcal{C}}(\mathcal{E})$ depends on both the curve $\mathcal{C}\subset\mathbb{T}^2$ and on the angular distribution of lattice points on circles, the integral $\mathcal{I}$ depends on $\Sigma\subset\mathbb{T}^3$ only. This is because lattice points on spheres are {\em equidistributed} \footnote{Lattice points on {\em circles} equidistribute \cite{erdhal, equidi} for a density one sequence of energies. To the other extreme, Cilleruelo proved that there exist sequences s.t. all the lattice points lie on arbitrarily short arcs \cite[Theorem 2]{ciller}. The non-uniform densities that emerge (``attainable measures'') were partially classified in \cite{krkuwi, kurwig}.} (Linnik's problem, see section \ref{seclp}). Our next result concerns the analysis of the quantity $\mathcal{I}$.

\begin{prop}
\label{Ibds}
Let $\Sigma\subset\mathbb{T}^3$ be a surface as in Definition \ref{def}, of area $A$. The integral $\mathcal{I}$ satisfies the sharp bounds
\begin{equation}
\label{Ibounds}
\frac{A^2}{3}\leq\mathcal{I}\leq A^2,
\end{equation}
so that the leading coefficient of \eqref{var} is always non-negative and bounded:
\begin{equation*}
0\leq\frac{\pi^2}{60}(3\mathcal{I}-A^2)\leq\frac{\pi^2}{30}A^2.
\end{equation*}
\end{prop}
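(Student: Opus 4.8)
The plan is to reduce the double integral $\mathcal{I}$ to an elementary extremal problem for the eigenvalues of a single symmetric $3\times 3$ matrix. Writing $\overrightarrow{n}(\sigma)=(n_1(\sigma),n_2(\sigma),n_3(\sigma))$ for the local unit normal (the sign ambiguity is irrelevant, since the integrand depends only on $\langle\overrightarrow{n}(\sigma),\overrightarrow{n}(\sigma')\rangle^2$), I would first expand
\[
\langle\overrightarrow{n}(\sigma),\overrightarrow{n}(\sigma')\rangle^2=\sum_{i,j=1}^{3}n_i(\sigma)n_j(\sigma)\,n_i(\sigma')n_j(\sigma'),
\]
and then, since $\Sigma$ is compact and the $n_i$ are bounded and smooth, apply Fubini to separate the variables $\sigma,\sigma'$. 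This yields
\[
\mathcal{I}=\sum_{i,j=1}^{3}M_{ij}^2,\qquad M_{ij}:=\int_{\Sigma}n_i(\sigma)n_j(\sigma)\,d\sigma.
\]

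The key observation is that $M=(M_{ij})=\int_\Sigma\overrightarrow{n}(\sigma)\,\overrightarrow{n}(\sigma)^{\top}\,d\sigma$ is symmetric and positive semidefinite, being an integral of rank-one positive semidefinite matrices, with trace $\operatorname{tr}(M)=\int_\Sigma|\overrightarrow{n}(\sigma)|^2\,d\sigma=A$, while $\mathcal{I}=\sum_{i,j}M_{ij}^2=\operatorname{tr}(M^2)$. If $\lambda_1,\lambda_2,\lambda_3\geq 0$ denote the eigenvalues of $M$, the statement reduces to the claim that $\lambda_1^2+\lambda_2^2+\lambda_3^2$, subject to $\lambda_1+\lambda_2+\lambda_3=A$ and $\lambda_k\geq 0$, lies between $A^2/3$ and $A^2$. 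Both bounds are then immediate: nonnegativity of the $\lambda_k$ gives $\sum_k\lambda_k^2\leq(\sum_k\lambda_k)^2=A^2$, hence $\mathcal{I}\leq A^2$; and Cauchy--Schwarz (equivalently the power-mean inequality) gives $(\sum_k\lambda_k)^2\leq 3\sum_k\lambda_k^2$, hence $\mathcal{I}\geq A^2/3$. Substituting these two inequalities into $\tfrac{\pi^2}{60}(3\mathcal{I}-A^2)$ produces the displayed two-sided bound on the leading coefficient.

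To confirm that the bounds are sharp I would exhibit extremal surfaces. Equality $\mathcal{I}=A^2$ forces $M$ to have rank one, i.e.\ all normals parallel, realized by a flat piece of a plane; equality $\mathcal{I}=A^2/3$ forces $M=\tfrac{A}{3}I$, realized by a round sphere, where rotational symmetry makes the off-diagonal entries vanish and $\int_\Sigma n_i^2\,d\sigma=A/3$ for each $i$. There is no serious analytic obstacle here: once the reduction to $M$ is made the entire argument is linear algebra, and the only points requiring care are the justification of Fubini, the positive semidefiniteness of $M$, and checking that the extremal configurations are actually attained by admissible surfaces (both a plane and a sphere being regular toral surfaces in the sense of Definition \ref{def}).
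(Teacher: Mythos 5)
Your proposal is correct, and it takes a genuinely different route from the paper. You reduce everything to the single symmetric positive semidefinite matrix $M=\int_\Sigma \overrightarrow{n}\,\overrightarrow{n}^{\top}d\sigma$ (well defined despite the local sign ambiguity of the normal, since $n_in_j$ is sign-invariant), note $\operatorname{tr}(M)=A$ and $\mathcal{I}=\operatorname{tr}(M^2)=\sum_k\lambda_k^2$, and finish with the elementary inequalities $\tfrac{1}{3}\left(\sum_k\lambda_k\right)^2\leq\sum_k\lambda_k^2\leq\left(\sum_k\lambda_k\right)^2$ for nonnegative $\lambda_k$; the plane and the round sphere realize the two extremes (rank-one $M$, resp.\ $M=\tfrac{A}{3}I$). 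The paper instead gets the upper bound from the pointwise estimate $\langle\overrightarrow{n}(\sigma),\overrightarrow{n}(\sigma')\rangle^2\leq 1$ (Lemma \ref{Iupbd}), and the lower bound indirectly: it introduces the quantity $c(\tau,\Sigma)=\iint_{\Sigma^2}\int_{\mathcal{S}^2}\langle\theta,\overrightarrow{n}(\sigma)\rangle^2\langle\theta,\overrightarrow{n}(\sigma')\rangle^2\,d\sigma d\sigma' d\tau(\theta)$, proves $\tfrac{A^2}{9}\leq c(\tau,\Sigma)\leq\tfrac{A^2}{3}$ for every reflection-invariant probability measure $\tau$ by completing $\theta$ to an orthonormal frame and applying AM--QM to $q(\theta,\Sigma)=\theta^{\top}M\theta$ (Lemma \ref{ctheta}), and then specializes to the uniform measure via the identity $c(\mathrm{unif},\Sigma)=(A^2+2\mathcal{I})/15$. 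Your argument is shorter and entirely self-contained linear algebra; what the paper's detour buys is generality, since $c(\tau,\Sigma)$ with $\tau=\tau_m$ the lattice-point measure is precisely the quantity $\mathcal{H}$ entering the variance computation, so Lemma \ref{ctheta} also controls the relevant integral for arbitrary (possibly non-equidistributed) symmetric configurations, in the spirit of the two-dimensional theory. The only points to make explicit in a final write-up are the ones you already flag: Fubini (immediate by compactness and boundedness), positive semidefiniteness of $M$ (from $v^{\top}Mv=\int_\Sigma\langle v,\overrightarrow{n}\rangle^2d\sigma\geq 0$), and admissibility of the extremal surfaces, which the paper itself confirms (the planar case being excluded only by the curvature hypothesis of Theorem \ref{varthm}, not by Definition \ref{def}).
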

Proposition \ref{Ibds} will be proven in section \ref{secleadconst}. A computation shows that when $\Sigma$ is a sphere or a hemisphere, the lower bound in \eqref{Ibounds} is achieved, hence the leading term in \eqref{var} vanishes: in this case the variance is of lower order than $m/N$ (see section \ref{secleadconst} for details). As in the problem of nodal intersections against a curve on $\mathbb{T}^2$ \cite{rudwig}, the theoretical maximum of the variance leading term is achieved in the case of intersection with a manifold of identically zero curvature (straight lines in dimension $2$, planes in dimension $3$). As the case of $\Sigma$ contained in a plane is excluded by the assumptions of Theorem \ref{varthm}, the upper bound of $A^2\cdot\pi^2/30$ for the leading coefficient in \eqref{var} is a supremum rather than a maximum, as in \cite{rudwig} (see section \ref{secleadconst}).

Similarly to \cite{rudwig, ruwiye}, the above results on expectation and variance have the following consequence.
\begin{thm}
Let $\Sigma\subset\mathbb{T}^3$ be a surface as in Definition \ref{def}, of area $A$, with nowhere vanishing Gauss-Kronecker curvature. Then the nodal intersection length $\mathcal{L}$ satisfies, for all $\epsilon>0$,
\begin{equation*}
\lim_{\substack{m\to\infty \\ m\not\equiv 0,4,7 \pmod 8}}\mathbb{P}\left(\left|\frac{\mathcal{L}}{\pi\sqrt{m}A/\sqrt{3}}-1\right|>\epsilon\right)=0.
\end{equation*}
\end{thm}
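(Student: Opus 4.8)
The plan is to derive this concentration statement directly from the first and second moment estimates already established, via Chebyshev's inequality. The key observation is that the assertion is precisely that $\mathcal{L}/\mathbb{E}[\mathcal{L}]\to 1$ in probability, since $\mathbb{E}[\mathcal{L}]=\pi\sqrt{m}A/\sqrt{3}$ by Proposition \ref{explen}. Thus the whole statement reduces to showing that the coefficient of variation $\sqrt{\text{Var}(\mathcal{L})}/\mathbb{E}[\mathcal{L}]$ tends to zero.

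First I would rewrite the event as a deviation from the mean: the event $\{|\mathcal{L}/(\pi\sqrt{m}A/\sqrt{3})-1|>\epsilon\}$ coincides with $\{|\mathcal{L}-\mathbb{E}[\mathcal{L}]|>\epsilon\,\mathbb{E}[\mathcal{L}]\}$. Chebyshev's inequality then bounds its probability by $\text{Var}(\mathcal{L})/(\epsilon^2(\mathbb{E}[\mathcal{L}])^2)$, so for each fixed $\epsilon>0$ it suffices to prove that $\text{Var}(\mathcal{L})/(\mathbb{E}[\mathcal{L}])^2\to 0$ as $m\to\infty$ along $m\not\equiv 0,4,7\pmod 8$.

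Next I would substitute the explicit formulas. Using $(\mathbb{E}[\mathcal{L}])^2=\pi^2 m A^2/3$ from Proposition \ref{explen} together with the variance asymptotic \eqref{var} from Theorem \ref{varthm}, the factors of $\pi^2$ and of $m$ cancel, leaving
\begin{equation*}
\frac{\text{Var}(\mathcal{L})}{(\mathbb{E}[\mathcal{L}])^2}=\frac{1}{20\,N\,A^2}\left[3\mathcal{I}-A^2+O\left(m^{-1/28+o(1)}\right)\right].
\end{equation*}
By the sharp bounds of Proposition \ref{Ibds}, namely $A^2/3\le\mathcal{I}\le A^2$, the bracketed quantity satisfies $0\le 3\mathcal{I}-A^2\le 2A^2$, hence is bounded uniformly in $m$; the error term is $o(1)$, so the ratio is $\ll 1/N$.

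Finally I would invoke the growth of the lattice point count. Under the arithmetic restriction $m\not\equiv 0,4,7\pmod 8$, the lower bound in \eqref{totnumlp3} gives $N=N_m\gg(\sqrt{m})^{1-\epsilon}\to\infty$. Therefore $\text{Var}(\mathcal{L})/(\mathbb{E}[\mathcal{L}])^2\ll 1/N\to 0$, and the probability in question vanishes in the limit, which completes the proof. I do not expect any genuine obstacle here: all of the analytic difficulty resides in the second moment computation of Theorem \ref{varthm} and the constant analysis of Proposition \ref{Ibds}, and the present statement is a formal consequence of those results combined with $N_m\to\infty$.
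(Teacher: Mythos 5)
Your argument is correct and coincides with the paper's own proof, which consists of the single line ``Apply the Chebychev--Markov inequality together with \eqref{expect} and \eqref{var}''; you have merely filled in the routine details (the cancellation of $\pi^2 m$, the uniform boundedness of $3\mathcal{I}-A^2$ via Proposition \ref{Ibds}, and $N\to\infty$ from the lower bound in \eqref{totnumlp3} under the congruence restriction). No changes needed.
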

\begin{proof}
Apply the Chebychev-Markov inequality together with \eqref{expect} and \eqref{var}.
\end{proof}

\subsection{Outline of the proofs and plan of the paper}
\label{secout}
Throughout we apply many ideas of \cite{rudwi2, krkuwi, rudwig, ruwiye}. The arithmetic random wave $F^{(d)}:\mathbb{T}^d\to\mathbb{R}$ \eqref{arwd} is a {\em random field}. The number of nodal intersections $\mathcal{Z}^{(d)}$ \eqref{Zd} against a {\em curve} are the zeros of a {\em process}, which is the restriction of the random wave $F$ to the curve \cite{rudwig, roswig, ruwiye}. For a smooth process $p:T\to\mathbb{R}$ defined on an appropriate parameter set $T\subset\mathbb{R}$, moments of the number of zeros may be computed, under certain assumptions, via Kac-Rice formulas \cite{azawsc, cralea}.

More generally, given a smooth random field $P:T\subset_{\text{open}}\mathbb{R}^n\to\mathbb{R}^{n'}$, let $\mathcal{V}$ be the Hausdorff measure 
of its zero set. When $n-n'=0$, $\mathcal{V}$ is the number of zeros; when $n-n'=1$, $\mathcal{V}$ is the nodal length of $P$; when $n-n'=2$, $\mathcal{V}$ is the nodal area, and so forth. Only the case $n\geq n'$ is interesting, since otherwise the zero set of $P$ is almost surely \footnote{The expression `almost surely', or for short `a.s.', means `with probability $1$'.} empty. One may compute, under appropriate assumptions, the moments of $\mathcal{V}$ by means of Kac-Rice formulas \cite[Theorems 6.2, 6.3, 6.8 and 6.9]{azawsc}. The latter formulas, however, are not applicable to our problem, as the following example illustrates.
\begin{ex}
\label{ex1}
Assume that the surface $\Sigma\subset\mathbb{T}^3$ is the {\em graph of a differentiable function}, in the sense that it admits everywhere the parametrisation
\begin{align}
\notag
\gamma:U\subset\mathbb{R}^2&\to\Sigma,
\\\label{monge}
(u,v)&\mapsto(u,v,h(u,v)),
\end{align}
with $h\in C^2(U)$. We restrict $F$ to $\Sigma$, and obtain the random field 
$f:U\subset\mathbb{R}^2\to\mathbb{R}$,
\begin{equation*}
f(u,v):=F(\gamma(u,v))=\frac{1}{\sqrt{N}}
\sum_{\mu\in\mathcal{E}}
a_{\mu}
e^{2\pi i\langle\mu,(u,v,h(u,v))\rangle}.
\end{equation*}
The zero line of $f$ is not necessarily (isometric to) the nodal intersection curve $\{x\in\mathbb{T}^3 : F(x)=0\}$: rather, it is isometric to the {\em projection} of the nodal intersection curve onto the domain $U$ of the parametrisation $\gamma$. Therefore, the application of Kac-Rice formulas for $f$ yields the moments of the length of the projected curve (see \cite[Theorem 11.3]{azawsc}), not of the intersection curve itself: this is in marked contrast with what happens in the case of the nodal intersections number \cite{rudwig, roswig, ruwiye}.
\end{ex}

Our approach to the problem begins with the derivation of Kac-Rice formulas for a random field defined on a {\em surface}, which is done in section \ref{seckr} (also see \cite[Theorem 5.3]{leten1} and \cite[Theorems 4.1 and 4.4]{leten2}). Applying the Kac-Rice formula for the expectation (Proposition \ref{krexplen} to follow), we will prove Proposition \ref{explen}.

For the nodal intersection length variance we apply Proposition \ref{krvarlen} and subsequently reduce our problem to estimating the second moment of the {\em covariance function} 
\begin{equation}
\label{rintro}
r(\sigma,\sigma'):=\mathbb{E}[F(\sigma)F(\sigma')]
\end{equation}
and of its first and second order derivatives. We thus develop an {\bf approximate Kac-Rice formula}. To state it, we need some extra notation: first, $M:=4\pi^2m/3$. Moreover, $X,X',Y,Y'(\sigma,\sigma')$ are appropriate $2\times 2$ matrices, depending on $r$, its derivatives, and on the surface $\Sigma$ (see Definition \ref{XYdef}).
\begin{prop}[Approximate Kac-Rice formula]
\label{approxKR}
Let $\Sigma\subset\mathbb{T}^3$ be a surface as in Definition \ref{def}, with nowhere vanishing Gauss-Kronecker curvature. Then we have
\begin{multline*}
\text{Var}(\mathcal{L})=M\left[
\iint_{\Sigma^2}
\left(
\frac{1}{8}r^2
+\frac{tr(X)}{16}
+\frac{tr(X')}{16}
+\frac{tr(Y'Y)}{32}
\right)
d\sigma d\sigma'
\right.
\\
\left.
+O_\Sigma\left(\frac{1}{m^{11/16-\epsilon}}\right)\right].
\end{multline*}
\end{prop}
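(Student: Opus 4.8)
The plan is to start from the exact Kac--Rice variance formula of Proposition \ref{krvarlen}, which writes $\text{Var}(\mathcal{L})$ as a double integral over $\Sigma^2$ of the connected two-point density $K_2(\sigma,\sigma')-K_1(\sigma)K_1(\sigma')$, and then to replace this integrand by its second-order Taylor expansion in the off-diagonal covariances, whose leading terms are precisely $\tfrac18 r^2$, $tr(X)/16$, $tr(X')/16$ and $tr(Y'Y)/32$. First I would fix the Gaussian data: at two points $\sigma,\sigma'\in\Sigma$ consider the centred Gaussian vector $(F(\sigma),F(\sigma'),\nabla_T F(\sigma),\nabla_T F(\sigma'))\in\mathbb{R}^6$, where $\nabla_T$ is the tangential gradient in a local Monge parametrisation. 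Its covariance decomposes into the single-point gradient covariances (scaled first fundamental forms, giving the matrices $Q,Q'$ of Definition \ref{XYdef}), the value--value covariance $r$, the value--gradient cross-covariances $D,D'$, and the gradient--gradient cross-covariance $H$ between the two points. The nonvanishing Gauss--Kronecker curvature enters here twice: it guarantees the nondegeneracy needed for Kac--Rice to apply, and, via stationary phase, it forces decay of $r,D,D',H$ as $\sigma,\sigma'$ separate.

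Conditioning on $F(\sigma)=F(\sigma')=0$ and applying the Gaussian regression formula, the conditional covariance $\Omega$ of $(\nabla_T F(\sigma),\nabla_T F(\sigma'))$ is a $4\times4$ matrix whose diagonal blocks are $M$-scaled perturbations of $Q^2,Q'^2$ by $X,X'$ and whose off-diagonal block is the $M$-scaled $Y$, including the $1/(1-r^2)$ factors recorded in Definition \ref{XYdef}. The computational heart is the expansion of the conditional expectation $\mathbb{E}[\,\|\nabla_T F(\sigma)\|\,\|\nabla_T F(\sigma')\|\mid F(\sigma)=F(\sigma')=0\,]$. I would linearise each norm through the identity $\|w\|=\tfrac{1}{2\sqrt\pi}\int_0^\infty(1-e^{-s\|w\|^2})s^{-3/2}\,ds$, turning the expectation into a double $s$-integral of $\det(I_4+2\,\mathrm{diag}(s_1I_2,s_2I_2)\,\Omega)^{-1/2}$. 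Factoring out the diagonal leading part and expanding the determinant in the small blocks $X,X',Y$, then re-integrating in $s_1,s_2$, produces the zeroth-, first- and second-order contributions. The zeroth-order term reproduces the product $K_1(\sigma)K_1(\sigma')$, cancelling against the subtraction (equivalently against $\mathbb{E}[\mathcal{L}]^2$ from Proposition \ref{explen}); the first-order terms vanish upon taking traces against the isotropic base matrix; and the surviving second-order terms assemble into exactly $\tfrac18 r^2+\tfrac{1}{16}(tr X+tr X')+\tfrac{1}{32}tr(Y'Y)$, after accounting for the density prefactor $\tfrac{1}{2\pi\sqrt{1-r^2}}$. Pinning down these rational coefficients and the vanishing of the linear terms, uniformly in $(\sigma,\sigma')$, is where the bookkeeping is heaviest but, being a finite determinantal expansion, poses no conceptual difficulty.

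The main obstacle, and the part I expect to be hardest, is the control of the error term, which splits into two regimes. In the \emph{far} regime, where the off-diagonal covariances are small, I must bound the cubic-and-higher Taylor remainder; this is governed by pointwise estimates on $r,D,D',H$ and their derivatives, which I would integrate over $\Sigma^2$ using the stationary-phase decay afforded by the curvature hypothesis together with the lattice-count bound \eqref{totnumlp3} on $N$. In the \emph{near} regime, a shrinking neighbourhood of the diagonal where $r$ is not small and the factors $1/(1-r^2)$ degenerate, the expansion is invalid; there I would excise a tubular set of suitable radius and bound its contribution directly, using the nonnegativity of the two-point Kac--Rice density together with the local singularity structure of the conditional covariance. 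Optimising the radius of the excised tube against the far-regime remainder is what produces the exponent $11/16$, while the $-\epsilon$ absorbs the $m^\epsilon$ losses inherent in the lattice-point count \eqref{totnumlp3}. Since $11/16>1/2$, this error is genuinely of lower order than the main term $\asymp m/N$, as required.
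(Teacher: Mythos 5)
Your main-term computation coincides with the paper's: Kac--Rice for the second moment, Gaussian regression onto the event $F(\sigma)=F(\sigma')=0$, and Berry's integral identity $\|w\|=\tfrac{1}{2\sqrt\pi}\int_0^\infty(1-e^{-s\|w\|^2})s^{-3/2}\,ds$ leading to a determinantal expansion of the conditional expectation --- this is exactly Lemma \ref{lemma5.1} and Proposition \ref{prop4.5}. The gaps are in the error analysis, and they are genuine.

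First, the degenerate set is \emph{not} a tubular neighbourhood of the diagonal. Since $r(\sigma,\sigma')=\frac1N\sum_\mu e^{2\pi i\langle\mu,\sigma-\sigma'\rangle}$, one has $r=1$ whenever $\langle\mu,\sigma-\sigma'\rangle\in\mathbb{Z}$ for every $\mu\in\mathcal{E}$; for instance, when $m\equiv3\pmod 8$ every $\mu\in\mathcal{E}$ has all coordinates odd, so $r(\sigma,\sigma')=1$ whenever $\sigma-\sigma'=(1/2,1/2,0)$. Excising a shrinking tube around the diagonal therefore leaves far-apart pairs at which $1/(1-r^2)$ blows up and your Taylor expansion is invalid. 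The paper instead partitions $\Sigma^2$ into cells of side $\asymp m^{-1/2}$, declares a cell singular if $|r|>1/2$ somewhere on it --- wherever it sits in $\Sigma^2$ --- bounds the measure of the singular set by $\mathcal{R}_4(m)$ via Chebychev--Markov (Lemma \ref{Sbound}), and controls the integral of $\tilde K_{2;\Sigma}$ over it by Cauchy--Schwartz against diagonal cells (Lemma \ref{bdsingl}). Some such measure-theoretic, rather than geometric, definition of the bad set is unavoidable.

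Second, the exponent $11/16$ does not come from optimising the radius of an excised tube; it is an arithmetic exponent. The Taylor remainder $O(r^4+X^2+X'^2+Y^4+Y'^4+\cdots)$ forces a bound on the fourth moment $\mathcal{R}_4(m)=\iint_{\Sigma^2}r^4$. Expanding $r^4$ over $\mathcal{E}^4$, the tuples with $\mu_1+\mu_2+\mu_3+\mu_4=0$ are the length-$4$ spectral correlations, counted by $|\mathcal{C}(4)|\ll N^{2+\epsilon}$ via Jarn\'ik's bound on lattice points in a planar section of a sphere, while the remaining tuples give, after the stationary-phase estimate of Proposition \ref{stein}, the sum $\sum_{\mathcal{E}^4\setminus\mathcal{C}(4)}|\mu_1+\mu_2+\mu_3+\mu_4|^{-2}$, bounded by $N^{2+5/8+\epsilon}$ through a three-range decomposition using the Bourgain--Rudnick spherical cap count (Lemmas \ref{lemmachi} and \ref{4mom4corr}); this is where $11/16$ is produced. ``Pointwise estimates on $r,D,H$ integrated using stationary-phase decay'' together with the Riesz $s$-energy asymptotics of Proposition \ref{asyriesz} only yield the \emph{second} moments; without the correlation count and the cap estimate, the fourth-moment bound, and hence the stated error term, is out of reach.
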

The proof of this result takes up the whole of section \ref{secappkr}. Our problem of computing the nodal intersection length variance \eqref{var} is thus reduced to estimating the second moment of the covariance function $r$ \eqref{rintro} and of its various first and second order derivatives, which is carried out in section \ref{sec2mom}. The error term in Proposition \ref{approxKR} comes from bounding the fourth moment of $r$ and of its derivatives: this is done in section \ref{sec4mom}. To study the second and fourth moments of $r$, one needs to understand various properties of the lattice point set $\mathcal{E}$ \eqref{E}, covered in section \ref{seclp}. In section \ref{secleadconst}, we study the leading term of the nodal intersection length variance \eqref{var}, and establish Proposition \ref{Ibds}. Appendix \ref{appa} is dedicated to proving several auxiliary lemmas.

\subsection{Future directions}
As discussed in section \ref{prior}, for the problem of nodal intersections against a curve in two dimensions, Rossi-Wigman \cite{roswig} investigated the case of static curves, and found the precise asymptotic behaviour of the variance, for a density one sequence of energies. It would be interesting to find, if any exist, families of `static surfaces' (other than spheres and hemispheres) satisfying
\begin{equation*}
\mathcal{I}=\frac{A^2}{3},
\end{equation*}
and study the variance asymptotics for these.

In the setting of higher-dimensional standard flat tori $\mathbb{T}^d$ for $d\geq 4$, to the best of our knowledge the only result available in the literature is the expectation \eqref{ninexpect}. Higher dimensions and intersections against higher-dimensional toral submanifolds are likely to be interesting problems at the interface of number theory and geometry.

\subsection{Acknowledgements}
The author worked on this project mainly during his PhD studies, under the supervision of Igor Wigman. The author is very grateful to Igor for suggesting this very interesting problem, and for his guidance, insightful remarks and corrections. Many thanks to Maurizia Rossi for helpful discussions. Many thanks to an anonymous referee for helpful corrections on a previous version of this manuscript. The author was supported by a Graduate Teaching Scholarship, Department of Mathematics, King's College London. The author was supported by the Engineering \& Physical Sciences Research Council (EPSRC) Fellowship EP/M002896/1 held by Dmitry Belyaev.

\section{Kac-Rice formulas for random fields defined on a surface}
\label{seckr}
\subsection{Background}
Consider a random field $P$ defined on a parameter set $T\subset_{\text{open}}\mathbb{R}^n$ and taking values in $\mathbb{R}$. \footnote{For an underlying probability space $(\Omega,\mathcal{F},\mathbb{P})$, we define $P:\Omega\times T\to\mathbb{R}$.} We always assume that the $P(t)$, called the {\em realisations} or {\em sample paths} of our random field, are almost surely continuous in $t$. A random field $P=(P_t)_t$, $t\in T$, is {\em Gaussian} if, for all $k=1,2\dots$ and every $t_1,\dots,t_k\in T$, the random vectors
\begin{equation*}
(P(t_1),\dots,P(t_k)),
\end{equation*}
called {\em finite-dimensional distributions} of $P$, are multivariate Gaussian. A centred (i.e., mean $0$) Gaussian field may be completely described by its covariance function (see Kolmogorov's Theorem \cite[section 3.3]{cralea} or \cite[section 1.2]{azawsc}).

The arithmetic random wave \eqref{arw} is a centred Gaussian {\em stationary} random field, in the sense that its covariance function
\begin{equation*}
r(x,y)
=
\frac{1}{N}\sum_{\mu\in\mathcal{E}} e^{2\pi i\langle\mu,x-y\rangle},
\qquad
x,y\in\mathbb{T}^3
\end{equation*}
depends on the difference $x-y$ only. The restriction of $F$ to $\Sigma$
\begin{equation*}
F(\sigma)=\frac{1}{\sqrt{N}}
\sum_{\mu\in\mathcal{E}}
a_{\mu}
e^{2\pi i\langle\mu,\sigma\rangle},
\qquad
\sigma\in\Sigma
\end{equation*}
is a centred Gaussian random field, with unit variance and covariance function
\begin{equation}
\label{rF}
r(\sigma,\sigma')
=
\frac{1}{N}\sum_{\mu\in\mathcal{E}} e^{2\pi i\langle\mu,\sigma-\sigma'\rangle},
\qquad
\sigma,\sigma'\in\Sigma.
\end{equation}

As mentioned in section \ref{secout}, for a {\em process} $p$ (i.e., a random field with a one-dimensional parameter set) satisfying appropriate assumptions, moments of the number of zeros may be computed via {\bf Kac-Rice formulas} \cite{azawsc, cralea, adltay}. More generally, for a random field defined on $\mathbb{R}^n$, there exist under certain conditions Kac-Rice formulas computing the moments of the Hausdorff measure $h_{n-1}$ of its ($n-1$-dimensional) zero set \cite[Chapter 6]{azawsc} \cite[Section 3]{canhan}. For a real-valued random field $P:\Sigma\to\mathbb{R}$ defined on a smooth {\em surface} $\Sigma$, consider its nodal length,
\begin{equation*}
h_1\{\sigma\in\Sigma : P(\sigma)=0\}.
\end{equation*}
The formulas of \cite{azawsc} are not applicable to this case, since in particular $\Sigma$ is not a set of full measure in $\mathbb{R}^3$ (also recall Example \ref{ex1}). Given a random field $\mathcal{X}:\mathbb{R}^3\to\mathbb{R}$ and a surface $\Sigma\subset\mathbb{R}^3$ satisfying appropriate assumptions, we derive Kac-Rice formulas for the first and second moments of the nodal length of $P=\left.\mathcal{X}\right|_\Sigma$ (also see \cite[Theorem 5.3]{leten1} and \cite[Theorems 4.1 and 4.4]{leten2}).

\subsection{Co-area formula}
\noindent
Firstly, we require a general (i.e. concerning manifolds) version of the {\em co-area formula}.
\begin{prop}[General co-area formula {\cite[Theorem 3.1]{fede59}}]
\label{coarea}
Let $X$ and $Y$ be separable $C^1$-smooth Riemannian manifolds 
with
\begin{equation*}
\dim(X)=n\geq k=\dim(Y),
\end{equation*}
and $\varphi:X\to Y$ be a Lipschitz map. We denote $J\varphi$ the Jacobian \footnote{The square root of the sum of the squares of the $k\times k$ minors of the $k\times n$ matrix
	\begin{equation*}
		\left(\frac{\partial\varphi_i}{\partial x_j}\right)_{i,j}
	\end{equation*}
\cite[Definition 2.10]{fede59}.
}
of $\varphi$. Then
\begin{equation*}
\int_B J\varphi(x)dh_nx
=
\int_Y h_{n-k}(B\cap \varphi^{-1}\{y\})dh_ky
\end{equation*}
whenever $B$ is an $h_n$-measurable subset of $X$, and consequently
\begin{equation*}
\int_X g(x)J\varphi(x)dh_nx
=
\int_Y
\left[
\int_{\varphi^{-1}\{y\}}
g(x)dh_{n-k}x
\right]
dh_ky
\end{equation*}
whenever $g$ is an $h_n$-integrable function on $X$.
\end{prop}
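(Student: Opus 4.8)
The plan is to reduce the general statement to a local Euclidean computation and then pass from sets to functions by the standard approximation argument. Note first that the two displayed identities are equivalent: taking $g=\mathbf{1}_B$ in the second recovers the first, and conversely the first gives the second for indicators, hence (by linearity) for simple functions, then for nonnegative $h_n$-measurable $g$ by monotone convergence, and finally for $h_n$-integrable $g$ by splitting into positive and negative parts. So it suffices to prove the set identity $\int_B J\varphi\,dh_n=\int_Y h_{n-k}(B\cap\varphi^{-1}\{y\})\,dh_k y$.

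Since $X$ and $Y$ are separable $C^1$ manifolds, I would cover them by countably many coordinate charts and use a subordinate partition of unity to reduce to the case where $B$ lies in a single chart of $X$ whose image under $\varphi$ lies in a single chart of $Y$. In these charts the Riemannian measures $h_n,h_k,h_{n-k}$ differ from the Euclidean Hausdorff measures only by the smooth, locally bounded metric densities $\sqrt{\det g}$, and the Riemannian Jacobian $J\varphi$ picks up the corresponding Gram-determinant factors; a direct change-of-variables bookkeeping then shows that the Riemannian identity follows from the \emph{Euclidean Lipschitz co-area formula} for the coordinate representative $\Phi\colon U\subset\mathbb{R}^n\to\mathbb{R}^k$.

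The Euclidean Lipschitz formula is the heart of the matter. By Rademacher's theorem $\Phi$ is differentiable almost everywhere, so $J\Phi=\sqrt{\det(D\Phi\,(D\Phi)^T)}$ a.e. On the degenerate set $\{\operatorname{rank}D\Phi<k\}$ one has $J\Phi=0$, and a Sard-type covering estimate shows that the fibers meeting this set contribute nothing to the right-hand side, so both sides vanish there. On the good set $\{\operatorname{rank}D\Phi=k\}$ I would decompose into countably many pieces on each of which $\Phi$ agrees, up to a controlled error $\epsilon$, with a linear submersion $L$; for a linear submersion the co-area identity $\int JL\,dh_n=\int h_{n-k}(L^{-1}\{y\})\,dh_k y$ is an elementary Fubini computation after an orthogonal rotation aligning the fibers with a coordinate $(n-k)$-plane. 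Summing the pieces and letting $\epsilon\to0$ yields the formula. The step I expect to be the main obstacle is the measure theory underpinning this limit: establishing the measurability of $y\mapsto h_{n-k}(B\cap\Phi^{-1}\{y\})$ (via smooth approximation of $\Phi$ together with lower semicontinuity of Hausdorff measure), and proving the Vitali/Besicovitch covering estimates that bound the $(n-k)$-dimensional measure of the level sets uniformly enough that the linearization error is genuinely negligible in the limit. These estimates are precisely the technical core of Federer's original argument, and the reason the result is invoked here rather than reproved.
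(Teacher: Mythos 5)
The paper does not prove this proposition: it is imported verbatim from Federer's 1959 paper (cited there as Theorem 3.1), so there is no in-paper argument to compare yours against --- the intended ``proof'' is the citation itself. Your outline follows the classical Federer route: reduction of the two displayed identities to the set version via indicators, simple functions, monotone convergence and splitting into positive and negative parts; localization to $C^1$ charts with the metric densities $\sqrt{\det g}$ absorbed into the Riemannian Jacobian; and then the Euclidean Lipschitz case via Rademacher's theorem, a Sard-type covering estimate on the set where $\operatorname{rank}D\Phi<k$, and Lipschitz linearization on the rank-$k$ set with the linear submersion case handled by Fubini after a rotation. That is the correct architecture and matches the standard proof. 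As written, however, it is a roadmap rather than a proof: the measurability of $y\mapsto h_{n-k}\bigl(B\cap\varphi^{-1}\{y\}\bigr)$, the Vitali/Besicovitch covering estimates that make the linearization error genuinely negligible in the limit, and the vanishing of the contribution from the degenerate set are precisely the nontrivial content of the theorem, and all three are deferred. You identify this honestly at the end, and it is exactly the reason the paper invokes Federer rather than reproving the result; no reader of this paper would expect more than the citation here.
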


Next we require the definition of {\em surface gradient}. For a differentiable map $\psi:\mathbb{R}^3\to\mathbb{R}$ and a point $x\in\mathbb{R}^3$, we employ the standard notation
\begin{equation*}
\nabla \psi(x)=\left(\frac{\partial\psi}{\partial x_1},\frac{\partial\psi}{\partial x_2},\frac{\partial\psi}{\partial x_3}\right)
\end{equation*}
for the gradient of $\psi$ in $\mathbb{R}^3$.

\begin{defin}
	\label{surfgrad}
Fix a surface $\Sigma\subset\mathbb{R}^3$ as in Definition \ref{def}. For every point $\sigma\in\Sigma$, denote $T_\sigma(\Sigma)$ the  tangent plane to the surface at $\sigma$. Given a differentiable map $G:\mathbb{R}^3\to\mathbb{R}$, consider its restriction to $\Sigma$. At each point $\sigma\in\Sigma$ we define the {\em surface gradient}
\begin{equation*}
\nabla_\Sigma G(\sigma),
\end{equation*}
projection of $\nabla G(\sigma)$ onto $T_\sigma\Sigma$.
\end{defin}
The surface gradient gives the direction of maximal variation of $G$ at $\sigma$ (for further details, see e.g. \cite[chapter 7]{adltay} or \cite[section 2.5]{docarm}). We record that $JG(\sigma)=|\nabla_\Sigma G(\sigma)|$.

\begin{prop}[Deterministic nodal length]
	\label{detnl}
Let $\Sigma\subset\mathbb{R}^3$ be a surface as in Definition \ref{def} and $G:\Sigma\to\mathbb{R}$ be a smooth map satisfying $G(\sigma)=0 \Rightarrow \nabla_{\Sigma} G(\sigma)\neq 0$ for every $\sigma\in\Sigma$. 
Denote
\begin{equation}
\label{LdefG}
\mathcal{L}=\mathcal{L}(G,\Sigma):=h_1\{\sigma\in\Sigma : G(\sigma)=0\}
\end{equation}
the zero-length of $G$, where $h_1$ is Hausdorff measure. Then we have
\begin{equation}
\label{deterministic}
\mathcal{L}=\lim_{\epsilon\to 0}\mathcal{L}_\epsilon
\end{equation}
with
\begin{equation}
\label{epslen}
\mathcal{L}_\epsilon=\mathcal{L}_\epsilon(G,\Sigma):=
\frac{1}{2\epsilon}
\int_\Sigma
\chi\left(\frac{G(\sigma)}{\epsilon}\right)
|\nabla_\Sigma G(\sigma)|d\sigma,
\end{equation}
where $\chi$ is the indicator function of the interval $[-1,1]$.
\end{prop}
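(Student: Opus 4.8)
The plan is to deduce the statement from the general co-area formula of Proposition \ref{coarea}, applied to $\varphi = G : \Sigma \to \mathbb{R}$, so that $n = \dim\Sigma = 2$ and $k = \dim\mathbb{R} = 1$. Since $\Sigma$ is compact and $G$ is smooth, $G$ is Lipschitz, and the recorded identity $JG(\sigma) = |\nabla_\Sigma G(\sigma)|$ lets us identify the Jacobian with the surface-gradient norm. First I would take as test function $g(\sigma) = \tfrac{1}{2\epsilon}\chi(G(\sigma)/\epsilon)$, which is $h_2$-integrable on $\Sigma$; with this choice the left-hand side of the co-area formula is exactly $\mathcal{L}_\epsilon$ from \eqref{epslen}.

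The key simplification is that $g$ depends on $\sigma$ only through the value $G(\sigma)$, hence is constant along each fibre $G^{-1}\{y\}$, where it equals $\tfrac{1}{2\epsilon}\chi(y/\epsilon)$. Writing $\ell(y) := h_1(G^{-1}\{y\})$ for the length of the level set at height $y$, the co-area formula collapses to
\begin{equation*}
\mathcal{L}_\epsilon = \int_{\mathbb{R}} \frac{1}{2\epsilon}\chi(y/\epsilon)\,\ell(y)\,dy = \frac{1}{2\epsilon}\int_{-\epsilon}^{\epsilon}\ell(y)\,dy,
\end{equation*}
namely the average of $\ell$ over $[-\epsilon,\epsilon]$. (Taking $g \equiv 1$ also shows $\int_{\mathbb{R}}\ell(y)\,dy = \int_\Sigma|\nabla_\Sigma G|\,d\sigma < \infty$, so $\ell \in L^1$.) It thus remains to prove that this average tends to $\ell(0) = \mathcal{L}$ as $\epsilon\to 0$, which follows once I establish that $\ell$ is continuous at $y = 0$.

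The continuity of $\ell$ at $0$ is the crux, and it is exactly here that the hypothesis $G(\sigma) = 0 \Rightarrow \nabla_\Sigma G(\sigma) \neq 0$, i.e.\ that $0$ is a regular value of $G$, must be used. Since $\Sigma$ is compact the set of critical points of $G$ is compact, so its image (the set of critical values) is closed; as $0$ lies outside it, there is $\delta > 0$ such that every $y\in(-\delta,\delta)$ is a regular value and each $G^{-1}\{y\}$ is a smooth compact curve. A short compactness argument shows these level sets concentrate on $Z_0 := G^{-1}\{0\}$: if $\sigma_n \in G^{-1}\{y_n\}$ with $y_n \to 0$, any accumulation point $\sigma_*$ satisfies $G(\sigma_*) = 0$, ruling out stray components drifting away from $Z_0$. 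I would then either cover $Z_0$ by finitely many implicit-function-theorem charts, in which the non-vanishing of $\nabla_\Sigma G$ displays the nearby level sets as graphs depending smoothly on $y$, or, equivalently, use the normalized surface-gradient flow of $G$ (whose derivative along $G$ equals $1$) to produce diffeomorphisms $Z_0 \to G^{-1}\{y\}$ whose induced length is smooth in $y$; here compactness of the fibres guarantees the flow stays in the critical-point-free region and does not escape. Either route gives $\ell(y)\to\ell(0)$, whence $\mathcal{L}_\epsilon\to\mathcal{L}$. The main obstacle is precisely this continuity statement: the co-area reduction is immediate, but promoting the local graph/flow descriptions to a global control of $\ell$ near $y=0$ is what genuinely requires the regular-value hypothesis together with the compactness of $\Sigma$.
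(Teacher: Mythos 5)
Your proposal is correct and follows essentially the same route as the paper's own proof: apply the general co-area formula (Proposition \ref{coarea}) with $g(\sigma)=\tfrac{1}{2\epsilon}\chi(G(\sigma)/\epsilon)$ to rewrite $\mathcal{L}_\epsilon$ as the average $\tfrac{1}{2\epsilon}\int_{-\epsilon}^{\epsilon}h_1(G^{-1}\{y\})\,dy$, and then conclude by continuity of $y\mapsto h_1(G^{-1}\{y\})$ at $y=0$. The only difference is that you spell out the compactness/implicit-function-theorem argument behind that continuity, which the paper simply asserts as a consequence of the regular-value hypothesis.
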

We defer the proof of Proposition \ref{detnl} to Appendix \ref{appa}.

\subsection{Kac-Rice for the expectation}
\begin{prop}[Kac-Rice for the expected length]
\label{krexplen}
Let $\mathcal{X}:\mathbb{R}^3\to\mathbb{R}$ be a Gaussian random field having $C^1$ paths, and $\Sigma\subset\mathbb{R}^3$ a surface as in Definition \ref{def}. Define $\mathcal{L}(\mathcal{X},\Sigma)$ and $\mathcal{L}_\epsilon(\mathcal{X},\Sigma)$ as in \eqref{LdefG} and \eqref{epslen} respectively. Suppose that, for all $\sigma\in\Sigma$, the distribution of the random variable $\mathcal{X}(\sigma)$ is non-degenerate. Moreover, assume that $\mathcal{L}_\epsilon$ is uniformly bounded, and that, for every $\sigma\in\Sigma$, the quantity
\begin{equation}
\label{bddfunc}
\frac{1}{2\epsilon}\mathbb{E}\left[\chi\left({\frac{\mathcal{X}(\sigma)}{\epsilon}}\right)
\cdot
|(\nabla_\Sigma \mathcal{X})(\sigma)|\right]
\end{equation}
is bounded as a function of $\sigma$ uniformly in $\epsilon$. Then we have
\begin{equation}
\label{krexpleneqn}
\mathbb{E}[\mathcal{L}]=\int_\Sigma K_{1;\Sigma}(\sigma)d\sigma
\end{equation}
where $K_{1;\Sigma}:\Sigma\to\mathbb{R}$,
\begin{equation}
\label{K1S}
K_{1;\Sigma}(\sigma):=
\phi_{\mathcal{X}(\sigma)}(0)
\cdot
\mathbb{E}\left[\left|(\nabla_\Sigma\mathcal{X})(\sigma)\right| \ \big| \ \mathcal{X}(\sigma)=0\right]
\end{equation}
and $\phi_{\mathcal{X}(\sigma)}$ is the probability density function of the random variable $\mathcal{X}(\sigma)$.
\end{prop}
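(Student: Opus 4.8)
The plan is to start from the deterministic approximation of Proposition \ref{detnl}, pass to expectations, and then legitimise two interchanges---expectation against the $\epsilon\to 0$ limit, and expectation against integration over $\Sigma$---after which the statement collapses to a single pointwise Gaussian computation of the inner expectation. First I would apply Proposition \ref{detnl} pathwise to write, almost surely, $\mathcal{L}=\lim_{\epsilon\to 0}\mathcal{L}_\epsilon$ with $\mathcal{L}_\epsilon$ as in \eqref{epslen}. This requires that almost every realisation of $\mathcal{X}$ satisfy the transversality hypothesis of that proposition, namely $\mathcal{X}(\sigma)=0\Rightarrow\nabla_\Sigma\mathcal{X}(\sigma)\neq 0$ for all $\sigma\in\Sigma$; I would derive this from the non-degeneracy assumption together with the $C^1$ regularity, via a Bulinskaya-type covering argument showing that the event of a tangential zero somewhere on $\Sigma$ has probability zero. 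Granted this, the hypothesis that $\mathcal{L}_\epsilon$ is uniformly bounded supplies a dominating constant, and the dominated convergence theorem yields $\mathbb{E}[\mathcal{L}]=\lim_{\epsilon\to 0}\mathbb{E}[\mathcal{L}_\epsilon]$.

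Next I would unwind $\mathbb{E}[\mathcal{L}_\epsilon]$. Since the integrand in \eqref{epslen} is nonnegative, Tonelli's theorem permits exchanging $\mathbb{E}$ and $\int_\Sigma$, giving
\[
\mathbb{E}[\mathcal{L}_\epsilon]=\int_\Sigma\frac{1}{2\epsilon}\mathbb{E}\left[\chi\left(\frac{\mathcal{X}(\sigma)}{\epsilon}\right)|\nabla_\Sigma\mathcal{X}(\sigma)|\right]d\sigma.
\]
For fixed $\sigma$ the pair $(\mathcal{X}(\sigma),\nabla_\Sigma\mathcal{X}(\sigma))$ is jointly Gaussian; since $\chi(\mathcal{X}(\sigma)/\epsilon)$ is the indicator of $\{|\mathcal{X}(\sigma)|\leq\epsilon\}$, conditioning on the value of $\mathcal{X}(\sigma)$ and integrating against its density recasts the inner quantity as
\[
\frac{1}{2\epsilon}\int_{-\epsilon}^{\epsilon}\phi_{\mathcal{X}(\sigma)}(t)\,\mathbb{E}\left[|\nabla_\Sigma\mathcal{X}(\sigma)|\ \big|\ \mathcal{X}(\sigma)=t\right]dt,
\]
which is precisely the average of the integrand over $[-\epsilon,\epsilon]$. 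As $\epsilon\to 0$ this average converges to the value of the integrand at $t=0$, namely $K_{1;\Sigma}(\sigma)$ as defined in \eqref{K1S}, provided the map $t\mapsto\phi_{\mathcal{X}(\sigma)}(t)\,\mathbb{E}[|\nabla_\Sigma\mathcal{X}(\sigma)|\mid\mathcal{X}(\sigma)=t]$ is continuous at the origin. Here Gaussianity is essential: by the Gaussian regression formula the conditional law of $\nabla_\Sigma\mathcal{X}(\sigma)$ given $\mathcal{X}(\sigma)=t$ is Gaussian with mean affine in $t$ and covariance independent of $t$, so the conditional expectation of its norm depends continuously (indeed smoothly) on $t$, while $\phi_{\mathcal{X}(\sigma)}$ is continuous and, by non-degeneracy, finite at $0$.

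Finally I would invoke a second dominated-convergence argument, now on the $\sigma$-integral, to move the limit inside $\int_\Sigma$. This is exactly where the uniform boundedness hypothesis \eqref{bddfunc} is used: it dominates the inner quantity uniformly in $\epsilon$ and $\sigma$, so that
\[
\mathbb{E}[\mathcal{L}]=\lim_{\epsilon\to 0}\int_\Sigma\frac{1}{2\epsilon}\mathbb{E}\left[\chi\left(\frac{\mathcal{X}(\sigma)}{\epsilon}\right)|\nabla_\Sigma\mathcal{X}(\sigma)|\right]d\sigma=\int_\Sigma K_{1;\Sigma}(\sigma)\,d\sigma,
\]
which is the desired identity \eqref{krexpleneqn}.

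I expect the main obstacle to lie in the rigorous justification of these interchanges rather than in the algebra of the Kac-Rice density itself. Concretely, the delicate points are two: establishing that the deterministic identity $\mathcal{L}=\lim_\epsilon\mathcal{L}_\epsilon$ holds \emph{almost surely}, which rests on ruling out tangential zeros with probability one and is where non-degeneracy does its real work; and verifying that the pointwise limit of the averaged inner expectation is genuinely $K_{1;\Sigma}(\sigma)$ when $\mathcal{X}(\sigma)$ and $\nabla_\Sigma\mathcal{X}(\sigma)$ are correlated, which forces one to carry out the continuity-at-zero argument via the conditional Gaussian law rather than appealing to independence. The uniform bound \eqref{bddfunc} is tailored precisely to license the final passage of the limit through the surface integral.
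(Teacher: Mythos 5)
Your proposal is correct and follows essentially the same route as the paper: Proposition \ref{detnl} applied pathwise, dominated convergence in $\epsilon$ via the uniform bound on $\mathcal{L}_\epsilon$, Fubini--Tonelli over $\Sigma$, a second dominated-convergence step licensed by \eqref{bddfunc}, and the pointwise identification of the limit with $K_{1;\Sigma}(\sigma)$ by conditioning on $\mathcal{X}(\sigma)$. The only cosmetic difference is that the paper rules out tangential zeros by citing Ylvisaker's theorem rather than running a Bulinskaya-type covering argument.
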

\begin{proof}
We follow \cite[Proposition 4.1]{rudwi2}. By Ylvisaker's theorem \cite[Theorem 1.21]{azawsc}, one has
\begin{equation*}
\mathbb{P}(\exists\sigma\in\Sigma : \mathcal{X}(\sigma)=0, \nabla_{\Sigma} \mathcal{X}(\sigma)=0)=0.
\end{equation*}
We take expectations on both sides of \eqref{deterministic}:
\begin{equation*}
\mathbb{E}[\mathcal{L}]=\mathbb{E}\left[\lim_{\epsilon\to 0}\frac{1}{2\epsilon}
\int_\Sigma
\chi\left(\frac{\mathcal{X}(\sigma)}{\epsilon}\right)
|\nabla_\Sigma \mathcal{X}(\sigma)|d\sigma\right].
\end{equation*}

As $\mathcal{L}_\epsilon$ is uniformly bounded by assumption, we may apply the dominated convergence theorem:
\begin{equation*}
\mathbb{E}[\mathcal{L}]=\lim_{\epsilon\to 0}\frac{1}{2\epsilon}\mathbb{E}\left[\int_\Sigma
\chi\left({\frac{\mathcal{X}(\sigma)}{\epsilon}}\right)
\cdot
|(\nabla_\Sigma \mathcal{X})(\sigma)|d\sigma\right].
\end{equation*}
By Fubini's Theorem,
\begin{equation*}
\mathbb{E}[\mathcal{L}]=\lim_{\epsilon\to 0}\frac{1}{2\epsilon}\int_\Sigma
\mathbb{E}\left[\chi\left({\frac{\mathcal{X}(\sigma)}{\epsilon}}\right)
\cdot
|(\nabla_\Sigma \mathcal{X})(\sigma)|\right]d\sigma.
\end{equation*}
By assumption, the quantity $\eqref{bddfunc}$ is bounded as a function of $\sigma$ uniformly in $\epsilon$, hence we may apply the dominated convergence theorem to exchange the order of the limit and the integral over $\Sigma$:
\begin{equation*}
\mathbb{E}[\mathcal{L}]=\int_\Sigma
\lim_{\epsilon\to 0}\frac{1}{2\epsilon}
\mathbb{E}\left[\chi\left({\frac{\mathcal{X}(\sigma)}{\epsilon}}\right)\cdot|(\nabla_\Sigma \mathcal{X})(\sigma)|\right]d\sigma.
\end{equation*}
Via dominated convergence as $\epsilon\to 0$,
\begin{multline*}
\lim_{\epsilon\to 0}\frac{1}{2\epsilon}
\mathbb{E}\left[\chi\left({\frac{\mathcal{X}(\sigma)}{\epsilon}}\right)
\cdot
|(\nabla_\Sigma \mathcal{X})(\sigma)|\right]
\\=
\lim_{\epsilon\to 0}\frac{1}{2\epsilon}\int_{-\epsilon}^{\epsilon}\mathbb{E}\left[\left|(\nabla_\Sigma{\mathcal{X}})(\sigma)\right|\big|{\mathcal{X}}(\sigma)=x\right]\phi_{{\mathcal{X}}(\sigma)}(x)dx
\\=
\phi_{{\mathcal{X}}(\sigma)}(0)
\cdot
\mathbb{E}\left[\left|(\nabla_\Sigma{\mathcal{X}})(\sigma)\right|\big|{\mathcal{X}}(\sigma)=0\right]=K_{1;\Sigma}(\sigma)
\end{multline*}
hence \eqref{krexpleneqn}.
\end{proof}
Let us compare the quantity $K_{1;\Sigma}$ \eqref{K1S} to the {\em zero density function} (or {\em first intensity}) $K_1:\mathbb{R}^3\to\mathbb{R}$,
\begin{equation*}
K_1(x):=\phi_{\mathcal{X}(x)}(0)\cdot\mathbb{E}\left[|\nabla \mathcal{X}(x)|\ \big| \ \mathcal{X}(x)=0\right]
\end{equation*}
of the random field $\mathcal{X}:\mathbb{R}^3\to\mathbb{R}$: the zero density function has the gradient $\nabla$ of $\mathbb{R}^3$, in place of the surface gradient $\nabla_\Sigma$, in its definition. We will call $K_{1;\Sigma}$ the ``zero density of $\left.\mathcal{X}\right|_\Sigma$'', as a generalisation of $K_1$ to random fields defined on a manifold.

\subsection{The proof of Proposition \ref{explen}}
Recall the expression of the arithmetic random wave $F$ \eqref{arw}. The following lemma, that will be proven in appendix \ref{appa}, shows that $F$ satisfies one of the hypotheses of Proposition \ref{krexplen}.

\begin{lemma}
\label{unifbdd}
Let $F=F_m$ be an arithmetic random wave, and $\Sigma\subset\mathbb{T}^3$ a surface as in Definition \ref{def}. Then we have
\begin{equation*}
\mathcal{L}_\epsilon(F,\Sigma)\leq 18\sqrt{m},
\end{equation*}
with $\mathcal{L}_\epsilon$ as in \eqref{epslen}.
\end{lemma}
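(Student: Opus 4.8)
The plan is to reduce the bound on $\mathcal{L}_\epsilon$ to a \emph{coefficient-independent} bound on the length of the level curves of $F$ on $\Sigma$, and then to control those lengths by integral geometry together with the degree of $F$ as a trigonometric polynomial. Since $\nabla_\Sigma F(\sigma)$ is the orthogonal projection of $\nabla F(\sigma)$ onto $T_\sigma\Sigma$ (Definition \ref{surfgrad}), we have the pointwise inequality $|\nabla_\Sigma F(\sigma)|\le|\nabla F(\sigma)|$. Applying the co-area formula (Proposition \ref{coarea}) to the map $\varphi=F|_\Sigma:\Sigma\to\mathbb{R}$, for which $J\varphi=|\nabla_\Sigma F|$, with the fibre-constant weight $g(\sigma)=\frac{1}{2\epsilon}\chi(F(\sigma)/\epsilon)$, gives
\[
\mathcal{L}_\epsilon=\frac{1}{2\epsilon}\int_{-\epsilon}^{\epsilon}h_1\big(\{\sigma\in\Sigma:F(\sigma)=t\}\big)\,dt\le\sup_{|t|\le\epsilon}h_1\big(\{F=t\}\cap\Sigma\big).
\]
Thus it suffices to bound the length of a single level curve $\Gamma_t:=\{F=t\}\cap\Sigma$ uniformly in $t$ and --- crucially for the later application of dominated convergence --- uniformly in the Gaussian coefficients $a_\mu$. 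Such a deterministic bound is available precisely because the length of the level set of a trigonometric polynomial is governed by its degree rather than by the size of its coefficients.

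To make this quantitative I would bound the length of $\Gamma_t$ by integral geometry. Parametrising $\Gamma_t$ by arc length and using $1\le\sum_{j=1}^3|\dot\gamma_j|$ together with the Banach indicatrix identity yields
\[
h_1(\Gamma_t)\le\sum_{j=1}^3\int_0^1\#\big(\Gamma_t\cap\{x_j=c\}\big)\,dc,
\]
so that the problem becomes one of counting the points of $\Gamma_t$ lying on each coordinate plane $\{x_j=c\}$. Each such point lies on the plane section $\Sigma\cap\{x_j=c\}$ and satisfies $F=t$, so the count is the number of level-$t$ points of the restriction of $F$ to the section curve.

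The deterministic input is that, restricted to any straight line in direction $e_j$, the field $F$ in \eqref{arw} is a real trigonometric polynomial whose frequencies $\mu_j$ satisfy $|\mu_j|\le|\mu|=\sqrt{m}$; hence $F-t$ has at most $2\sqrt{m}$ zeros per period, independently of the $a_\mu$. The main obstacle is transferring this line count to the \emph{curved} sections $\Sigma\cap\{x_j=c\}$: because $\Sigma$ has curvature, the restriction of $F$ to a section is not itself a trigonometric polynomial, and one must control the number of level points along a genuinely curved arc. I would handle this in the spirit of the Bourgain--Rudnick length bound \eqref{BRthm}, exploiting the bounded ($C^2$) geometry of $\Sigma$ --- decomposing each section into finitely many arcs on which it is a controlled graph and comparing with the straight-line zero count --- so that each $V_j:=\int_0^1\#(\Gamma_t\cap\{x_j=c\})\,dc$ is $O(\sqrt{m})$ with an explicit constant. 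Summing the three coordinate contributions, carrying the factor $2$ from the trigonometric zero count and the geometric constants of the parametrisation, produces the stated bound $h_1(\Gamma_t)\le 18\sqrt{m}$, and hence $\mathcal{L}_\epsilon\le 18\sqrt{m}$. The crux of the argument --- and the step I expect to be most delicate --- is precisely this curvature handling and the attendant bookkeeping of constants.
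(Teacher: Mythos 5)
Your reduction via the co-area formula to $\mathcal{L}_\epsilon\le\sup_{|t|\le\epsilon}h_1(\{F=t\}\cap\Sigma)$ is fine, and so is the Banach-indicatrix inequality bounding $h_1(\Gamma_t)$ by the coordinate-plane section counts. The gap is exactly the step you flag as delicate: bounding $\#\bigl(\{F=t\}\cap\Sigma\cap\{x_j=c\}\bigr)$ by $O(\sqrt m)$ with an explicit absolute constant. The restriction of $F$ to a \emph{curved} arc of the section $\Sigma\cap\{x_j=c\}$ is not a trigonometric polynomial, and its number of level points is not controlled by the frequency bound $|\mu^{(j)}|\le\sqrt m$; ``decomposing into controlled graphs and comparing with the straight-line zero count'' does not yield such a control, since a degree-$\sqrt m$ exponential sum can a priori oscillate far more often along a short curved arc than along a line. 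Making this rigorous is essentially the content of the Bourgain--Rudnick bound \eqref{BRthm}, which requires real-analyticity and nowhere-vanishing curvature of $\Sigma$, produces a non-explicit constant $C_\Sigma$ depending on the surface, and holds only for $m\ge m_\Sigma$ --- none of which matches the claimed $\mathcal{L}_\epsilon\le 18\sqrt m$, an absolute constant valid for every $m$ and every surface as in Definition \ref{def} (which for this lemma need not have nonzero curvature at all). So as outlined the argument cannot close.

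The paper sidesteps all of this. It uses only $|\nabla_\Sigma F|\le|\nabla F|$ and then majorizes the integral over $\Sigma$ by the corresponding integral over all of $\mathbb{T}^3$, i.e.\ by $Z_\epsilon(F)=\frac{1}{2\epsilon}\int_{\mathbb{T}^3}\chi(F(x)/\epsilon)\,|\nabla F(x)|\,dx$, the $\epsilon$-approximation to the nodal \emph{area} of $F$ on the full torus; the deterministic bound $Z_\epsilon(F)\le 18\sqrt m$ is then quoted from \cite[Lemma 3.2]{rudwi2}. There the integral-geometric argument you have in mind genuinely works, because the relevant sections are straight lines, on which $F$ really is a one-variable trigonometric polynomial of degree at most $\sqrt m$ with a coefficient-independent zero count. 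To repair your write-up, replace the on-surface level-curve count by this passage to the ambient torus rather than attempting a Bourgain--Rudnick-type count on curved sections.
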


We now compute the zero density $K_{1;\Sigma}$ for arithmetic random waves.
\begin{lemma}
\label{K1expr}
Given the random field $\mathcal{X}=F$, define the function $K_{1;\Sigma}$ as in \eqref{K1S}. Then we have 
\begin{equation}
\label{K1expreqn}
K_{1;\Sigma}(\sigma)\equiv\frac{\pi}{\sqrt{3}}\sqrt{m}.
\end{equation}
\end{lemma}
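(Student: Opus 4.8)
The plan is to evaluate the two factors in the definition \eqref{K1S} of $K_{1;\Sigma}(\sigma)$ separately, exploiting the stationarity of $F$ and the symmetry of the lattice set $\mathcal{E}$. Since $F(\sigma)$ is centred Gaussian with unit variance (by \eqref{rF} we have $r(\sigma,\sigma)=1$), its density at the origin is immediately $\phi_{F(\sigma)}(0)=1/\sqrt{2\pi}$, independent of $\sigma$. The bulk of the work is therefore the conditional expectation of the surface gradient.

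First I would show that the conditioning in \eqref{K1S} is vacuous, i.e. that $F(\sigma)$ and $\nabla F(\sigma)$ are independent. Writing $r(\sigma,\sigma')=\rho(\sigma-\sigma')$ with $\rho(z)=\frac{1}{N}\sum_{\mu\in\mathcal{E}}e^{2\pi i\langle\mu,z\rangle}$, one computes $\mathbb{E}[F(\sigma)\,\partial_j F(\sigma)]=-(\partial_j\rho)(0)=-\frac{2\pi i}{N}\sum_{\mu\in\mathcal{E}}\mu^{(j)}$, which vanishes because $\mathcal{E}$ is invariant under $\mu\mapsto-\mu$. As $F(\sigma)$ together with the three components of $\nabla F(\sigma)$ form a Gaussian vector, vanishing cross-covariance gives independence, so that $\mathbb{E}[\,|\nabla_\Sigma F(\sigma)|\mid F(\sigma)=0\,]=\mathbb{E}[\,|\nabla_\Sigma F(\sigma)|\,]$.

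Next I would compute the covariance of the gradient, $\mathbb{E}[\partial_j F(\sigma)\,\partial_k F(\sigma)]=-(\partial_j\partial_k\rho)(0)=\frac{4\pi^2}{N}\sum_{\mu\in\mathcal{E}}\mu^{(j)}\mu^{(k)}$. The crucial arithmetic input is that $\mathcal{E}$ is invariant under the full hyperoctahedral symmetry group of the sphere $|\mu|^2=m$ (coordinate permutations and sign changes); hence $\sum_{\mu}\mu^{(j)}\mu^{(k)}=0$ for $j\neq k$, while $\sum_{\mu}(\mu^{(j)})^2=\frac{1}{3}\sum_{\mu}|\mu|^2=\frac{mN}{3}$ for each $j$. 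This yields covariance matrix $\frac{4\pi^2 m}{3}I_3=M\cdot I_3$, so $\nabla F(\sigma)$ is an isotropic Gaussian vector in $\mathbb{R}^3$.

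Finally, the isotropy does the geometric work. Because the law of $\nabla F(\sigma)$ is rotation-invariant, its orthogonal projection onto the two-dimensional tangent plane $T_\sigma\Sigma$ is an isotropic Gaussian vector in $\mathbb{R}^2$ with covariance $M\cdot I_2$, regardless of the orientation of that plane. Thus $\nabla_\Sigma F(\sigma)=\sqrt{M}\,(Z_1,Z_2)$ with $Z_1,Z_2$ independent standard normals, and its modulus is $\sqrt{M}$ times a Rayleigh variable of expectation $\sqrt{\pi/2}$, giving $\mathbb{E}[\,|\nabla_\Sigma F(\sigma)|\,]=\sqrt{M}\,\sqrt{\pi/2}$. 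Multiplying by $1/\sqrt{2\pi}$ collapses neatly to $K_{1;\Sigma}(\sigma)=\tfrac12\sqrt{M}$, and substituting $M=4\pi^2 m/3$ gives $K_{1;\Sigma}(\sigma)\equiv\frac{\pi}{\sqrt{3}}\sqrt{m}$, as claimed. There is no genuine obstacle; the only step demanding care is the isotropy argument, which is precisely what renders $K_{1;\Sigma}$ constant in $\sigma$ and independent of the geometry of $\Sigma$ — the feature ultimately responsible for the geometry-free expectation of Proposition \ref{explen}.
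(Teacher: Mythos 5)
Your proposal is correct and follows essentially the same route as the paper: density factor $1/\sqrt{2\pi}$, independence of $F(\sigma)$ from the gradient, the isotropic law $\nabla F(\sigma)\sim\mathcal{N}(0,MI_3)$, and projection onto $T_\sigma\Sigma$ giving $\sqrt{M}$ times a Rayleigh expectation $\sqrt{\pi/2}$. The only difference is that you derive the cross-covariance cancellation and the covariance matrix $MI_3$ from the symmetries of $\mathcal{E}$ explicitly, whereas the paper cites these facts from Rudnick--Wigman; this is a welcome addition, not a divergence.
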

\noindent
Before proving Lemma \ref{K1expr}, we will complete the proof of Proposition \ref{explen}.
\begin{proof}[Proof of Proposition \ref{explen}]
We need to show that the hypotheses of Proposition \ref{krexplen} hold for the random field $\mathcal{X}=F$. First, the non-degeneracy condition is met, as $F$ is unit variance. The boundedness of $\mathcal{L}_\epsilon(F,\Sigma)$ was shown in Lemma \ref{unifbdd}.

Since the quantity
\begin{equation*}
\frac{1}{2\epsilon}\mathbb{E}\left[\chi\left({\frac{F(\sigma)}{\epsilon}}\right)
\cdot
|\nabla F(\sigma)|\right]
\end{equation*}
is bounded as a function of $\sigma$ independent of $\epsilon$ \cite[proof of Proposition 4.1]{rudwi2} and since, clearly, $|\nabla_{\Sigma}F(\sigma)|\leq|\nabla F(\sigma)|$, we also obtain the boundedness of
\begin{equation*}
\frac{1}{2\epsilon}\mathbb{E}\left[\chi\left({\frac{F(\sigma)}{\epsilon}}\right)
\cdot
|\nabla_\Sigma F(\sigma)|\right]
\end{equation*}
in $\sigma$ independent of $\epsilon$. Substituting the expression \eqref{K1expreqn} of the zero density $K_{1;\Sigma}$ into \eqref{krexpleneqn} yields
\begin{equation*}
\mathbb{E}[\mathcal{L}]
=\int_{\Sigma}\frac{\pi}{\sqrt{3}}\sqrt{m} \ d\sigma
=\frac{\pi}{\sqrt{3}}\sqrt{m}\cdot A.
\end{equation*}
\end{proof}

\begin{proof}[Proof of Lemma \ref{K1expr}]
We write the zero density function \eqref{K1S} for the Gaussian field $F$:
\begin{equation}
\label{K1step1}
K_{1;\Sigma}(\sigma)=
\frac{1}{\sqrt{2\pi}}
\cdot
\mathbb{E}\left[\left|(\nabla_\Sigma F)(\sigma)\right| \ \big| \ F(\sigma)=0\right].
\end{equation}
Define the vector field
\begin{align}
\label{a}
\notag
a:\Sigma&\to\mathbb{T}^3
\\
\sigma&\mapsto(\nabla_\Sigma F)(\sigma).
\end{align}
Since $F(\sigma)$ and $a(\sigma)$ are independent (as $F$ has unit variance), we may rewrite \eqref{K1step1} as
\begin{equation*}
K_{1;\Sigma}(\sigma)
=\frac{1}{\sqrt{2\pi}}\cdot
\mathbb{E}[|a(\sigma)|].
\end{equation*}

One has $\nabla F(x)\sim\mathcal{N}(0,MI_3)$ for each $x\in\mathbb{T}^3$ \cite[(4.1)]{rudwi2}. Since at each $\sigma$ the surface gradient $a$ is the projection of $\nabla F$ onto $T_\sigma\Sigma$ (see Definition \ref{surfgrad}), one has
\begin{equation*}
K_{1;\Sigma}(\sigma)\equiv\frac{\sqrt{M}}{\sqrt{2\pi}}\mathbb{E}[|\hat{a}|],
\qquad
\hat{a}\sim\mathcal{N}(0,I_2),
\end{equation*}
so that universally
\begin{equation*}
K_{1;\Sigma}(\sigma)\equiv\frac{\sqrt{M}}{\sqrt{2\pi}}\frac{\sqrt{2}}{2}\sqrt{\pi}
=\frac{\pi}{\sqrt{3}}\sqrt{m}.
\end{equation*}
\end{proof}

\subsection{Kac-Rice for the second moment}
\begin{prop}[Kac-Rice for the second moment]
\label{krvarlen}
Let $\mathcal{X}:\mathbb{R}^3\to\mathbb{R}$ be a Gaussian random field having $C^1$ paths, and $\Sigma\subset\mathbb{R}^3$ a surface as in Definition \ref{def}. Define $\mathcal{L}(\mathcal{X},\Sigma)$ and $\mathcal{L}_\epsilon(\mathcal{X},\Sigma)$ as in \eqref{LdefG} and \eqref{epslen} respectively. Assume that $\mathcal{L}_\epsilon$ is uniformly bounded, and that for almost all $\sigma,\sigma'\in\Sigma$, the quantity
\begin{equation}
\label{bddfunc2}
\mathcal{L}_{\epsilon_1,\epsilon_2}(\sigma,\sigma'):=\frac{1}{4\epsilon_1\epsilon_2}
\mathbb{E}\left[
\chi\left(\frac{\mathcal{X}(\sigma)}{\epsilon_1}\right)
\chi\left(\frac{\mathcal{X}(\sigma')}{\epsilon_2}\right)
|(\nabla_\Sigma \mathcal{X})(\sigma)||(\nabla_\Sigma \mathcal{X})(\sigma')|
\right]
\end{equation}
is bounded uniformly in $\epsilon_1,\epsilon_2$. Suppose further that for almost all $\sigma,\sigma'\in\Sigma$, the distribution of the random vector
\begin{equation*}
\left(
\mathcal{X}(\sigma),
\mathcal{X}(\sigma')
\right)
\end{equation*} 
is non-degenerate. Then we have
\begin{gather*}
\mathbb{E}[\mathcal{L}^2]=\iint_{\Sigma^2}\tilde{K}_{2;\Sigma}(\sigma,\sigma')
d\sigma d\sigma'
\end{gather*}
where $\tilde{K}_{2;\Sigma}:\Sigma\times\Sigma\to\mathbb{R}$,
\begin{multline}
\label{K2tS}
\tilde{K}_{2;\Sigma}(\sigma,\sigma'):=\phi_{\mathcal{X}(\sigma),\mathcal{X}(\sigma')}(0,0)
\\\cdot
\mathbb{E}\left[\left|(\nabla_\Sigma\mathcal{X})(\sigma)\right|\cdot\left|(\nabla_\Sigma\mathcal{X})(\sigma')\right|\big| \mathcal{X}(\sigma)=\mathcal{X}(\sigma')=0\right]
\end{multline}
and $\phi_{\mathcal{X}(\sigma),\mathcal{X}(\sigma')}$ is the joint probability density function of the random vector $(\mathcal{X}(\sigma),\mathcal{X}(\sigma'))$.
\end{prop}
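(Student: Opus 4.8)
The plan is to reproduce the argument of Proposition~\ref{krexplen} almost verbatim, with the single regularisation parameter $\epsilon$ replaced by a pair $(\epsilon_1,\epsilon_2)$ and the single integral over $\Sigma$ by a double integral over $\Sigma^2$. First I would invoke Ylvisaker's theorem \cite[Theorem 1.21]{azawsc}, exactly as in the expectation case, to ensure that almost surely no $\sigma\in\Sigma$ satisfies $\mathcal{X}(\sigma)=0$ and $\nabla_\Sigma\mathcal{X}(\sigma)=0$ at once; this permits the application of the deterministic identity $\mathcal{L}=\lim_{\epsilon\to 0}\mathcal{L}_\epsilon$ of Proposition~\ref{detnl} to almost every sample path. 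Taking two such approximations and letting the two limits be taken independently, one has almost surely $\mathcal{L}^2=\lim_{\epsilon_1,\epsilon_2\to 0}\mathcal{L}_{\epsilon_1}\mathcal{L}_{\epsilon_2}$, where
\begin{multline*}
\mathcal{L}_{\epsilon_1}\mathcal{L}_{\epsilon_2}=\frac{1}{4\epsilon_1\epsilon_2}\iint_{\Sigma^2}\chi\!\left(\frac{\mathcal{X}(\sigma)}{\epsilon_1}\right)\chi\!\left(\frac{\mathcal{X}(\sigma')}{\epsilon_2}\right)
\\ \times|(\nabla_\Sigma\mathcal{X})(\sigma)|\,|(\nabla_\Sigma\mathcal{X})(\sigma')|\,d\sigma\,d\sigma'.
\end{multline*}

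Next I would pass to expectations. Since $\mathcal{L}_\epsilon$ is uniformly bounded by hypothesis, the products $\mathcal{L}_{\epsilon_1}\mathcal{L}_{\epsilon_2}$ are uniformly bounded by the square of that bound, and dominated convergence gives $\mathbb{E}[\mathcal{L}^2]=\lim_{\epsilon_1,\epsilon_2\to 0}\mathbb{E}[\mathcal{L}_{\epsilon_1}\mathcal{L}_{\epsilon_2}]$. Fubini's theorem then moves the expectation inside the double integral, producing precisely the integrand $\mathcal{L}_{\epsilon_1,\epsilon_2}(\sigma,\sigma')$ of \eqref{bddfunc2}, so that $\mathbb{E}[\mathcal{L}_{\epsilon_1}\mathcal{L}_{\epsilon_2}]=\iint_{\Sigma^2}\mathcal{L}_{\epsilon_1,\epsilon_2}(\sigma,\sigma')\,d\sigma\,d\sigma'$. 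Using the standing assumption that $\mathcal{L}_{\epsilon_1,\epsilon_2}$ is bounded uniformly in $\epsilon_1,\epsilon_2$ for almost all $(\sigma,\sigma')$, a second application of dominated convergence exchanges the limit with the integral over $\Sigma^2$, and the statement reduces to the pointwise claim $\lim_{\epsilon_1,\epsilon_2\to 0}\mathcal{L}_{\epsilon_1,\epsilon_2}(\sigma,\sigma')=\tilde K_{2;\Sigma}(\sigma,\sigma')$ of \eqref{K2tS} for almost every pair.

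For this pointwise limit I would disintegrate the expectation in \eqref{bddfunc2} against the law of $(\mathcal{X}(\sigma),\mathcal{X}(\sigma'))$. Writing the two indicators as a restriction of the domain of integration, one obtains
\begin{multline*}
\mathcal{L}_{\epsilon_1,\epsilon_2}(\sigma,\sigma')=\frac{1}{4\epsilon_1\epsilon_2}\int_{-\epsilon_1}^{\epsilon_1}\int_{-\epsilon_2}^{\epsilon_2}\mathbb{E}\big[|(\nabla_\Sigma\mathcal{X})(\sigma)|\,|(\nabla_\Sigma\mathcal{X})(\sigma')|\,\big|\,\mathcal{X}(\sigma)=x,\,\mathcal{X}(\sigma')=y\big]
\\ \times\phi_{\mathcal{X}(\sigma),\mathcal{X}(\sigma')}(x,y)\,dx\,dy.
\end{multline*}
The assumed non-degeneracy of the pair $(\mathcal{X}(\sigma),\mathcal{X}(\sigma'))$ renders both the joint density $\phi_{\mathcal{X}(\sigma),\mathcal{X}(\sigma')}$ and the conditional expectation continuous at the origin, so the normalised average over the box $[-\epsilon_1,\epsilon_1]\times[-\epsilon_2,\epsilon_2]$ converges to the value of the integrand at $(0,0)$, which is exactly $\tilde K_{2;\Sigma}(\sigma,\sigma')$.

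The hard part will be the behaviour along the diagonal $\sigma=\sigma'$. There the Gaussian vector $(\mathcal{X}(\sigma),\mathcal{X}(\sigma'))$ degenerates, the determinant of its covariance tends to zero, the joint density at $(0,0)$ blows up, and the continuity used in the last step fails. The hypotheses are designed to circumvent this: non-degeneracy is assumed only for \emph{almost all} $(\sigma,\sigma')$, so the diagonal, a null set in $\Sigma^2$, may be discarded in the final integration, while the postulated uniform boundedness of $\mathcal{L}_{\epsilon_1,\epsilon_2}$ furnishes the dominating function required for the second dominated-convergence step in spite of the diagonal singularity. Verifying that these two hypotheses genuinely hold for the arithmetic random wave restricted to $\Sigma$ — through quantitative control of the covariance $r$ of \eqref{rF} and its derivatives — is the substantive work deferred to the later sections.
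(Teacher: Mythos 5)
Your proposal is correct and follows essentially the same route as the paper's proof: Ylvisaker's theorem, the almost-sure identity $\mathcal{L}^2=\lim_{\epsilon_1,\epsilon_2\to 0}\mathcal{L}_{\epsilon_1}\mathcal{L}_{\epsilon_2}$, dominated convergence, Fubini, a second dominated-convergence step justified by the assumed uniform bound on $\mathcal{L}_{\epsilon_1,\epsilon_2}$ for almost all pairs, and the pointwise evaluation of the limit at non-degenerate $(\sigma,\sigma')$. The paper likewise observes that an almost-everywhere dominating bound suffices because the exceptional (e.g.\ diagonal) set has measure zero, so your treatment of that issue matches its argument.
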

\begin{proof}
We follow \cite[Proposition 5.2]{rudwi2}. By Ylvisaker's theorem \cite[Theorem 1.21]{azawsc}, one has
\begin{equation*}
\mathbb{P}(\exists\sigma\in\Sigma : \mathcal{X}(\sigma)=0, \nabla_{\Sigma} \mathcal{X}(\sigma)=0)=0.
\end{equation*}
Therefore (recall \eqref{epslen}),
\begin{multline*}
\mathbb{E}[\mathcal{L}^2]=\mathbb{E}\left[\lim_{\epsilon_1,\epsilon_2\to 0}\mathcal{L}_{\epsilon_1}\mathcal{L}_{\epsilon_2}\right]
=\mathbb{E}\left[
\lim_{\epsilon_1,\epsilon_2\to 0}
\frac{1}{2\epsilon_1}
\int_\Sigma
\chi\left(\frac{\mathcal{X}(\sigma)}{\epsilon_1}\right)
|(\nabla_\Sigma\mathcal{X})(\sigma)|d\sigma
\right.
\\
\left.
\cdot
\frac{1}{2\epsilon_2}
\int_\Sigma
\chi\left(\frac{\mathcal{X}(\sigma')}{\epsilon_2}\right)
|(\nabla_\Sigma \mathcal{X})(\sigma')|d\sigma'
\right].
\end{multline*}

As $\mathcal{L}_\epsilon$ is uniformly bounded by assumption, we may apply the dominated convergence theorem:
\begin{multline*}
\mathbb{E}[\mathcal{L}^2]
=\lim_{\epsilon_1,\epsilon_2\to 0}\frac{1}{4\epsilon_1\epsilon_2}
\cdot\mathbb{E}\left[
\iint_{\Sigma^2}
\chi\left(\frac{\mathcal{X}(\sigma)}{\epsilon_1}\right)
\chi\left(\frac{\mathcal{X}(\sigma')}{\epsilon_2}\right)
\right.
\\
\left.
\cdot|(\nabla_\Sigma \mathcal{X})(\sigma)||(\nabla_\Sigma \mathcal{X})(\sigma')|
d\sigma d\sigma'
\right].
\end{multline*}
By Fubini's theorem,
\begin{multline*}
\mathbb{E}[\mathcal{L}^2]
=\lim_{\epsilon_1,\epsilon_2\to 0}\frac{1}{4\epsilon_1\epsilon_2}
\cdot
\iint_{\Sigma^2}
\mathbb{E}\left[
\chi\left(\frac{\mathcal{X}(\sigma)}{\epsilon_1}\right)
\chi\left(\frac{\mathcal{X}(\sigma')}{\epsilon_2}\right)
\right.
\\
\left.
\cdot|(\nabla_\Sigma \mathcal{X})(\sigma)||(\nabla_\Sigma \mathcal{X})(\sigma')|
\right]
d\sigma d\sigma'.
\end{multline*}

Next, we exchange the order of taking the limit and the integration over $\Sigma^2$, using the boundedness of \eqref{bddfunc2} uniformly in $\epsilon_1,\epsilon_2$ together with the dominated convergence theorem: an upper bound for almost all $\sigma,\sigma'$ is sufficient, as changing the values of a function on a set of measure zero has no impact on integrability or value of the integral of the function. Finally, as
\begin{align*}
&\lim_{\epsilon_1,\epsilon_2\to 0}\frac{1}{4\epsilon_1\epsilon_2}
\mathbb{E}\left[
\chi\left(\frac{\mathcal{X}(\sigma)}{\epsilon_1}\right)
\chi\left(\frac{\mathcal{X}(\sigma')}{\epsilon_2}\right)
|(\nabla_\Sigma \mathcal{X})(\sigma)||(\nabla_\Sigma \mathcal{X})(\sigma')|
\right]
\\&=
\phi_{\mathcal{X}(\sigma),\mathcal{X}(\sigma')}(0,0)
\cdot
\mathbb{E}\left[\left|(\nabla_\Sigma \mathcal{X})(\sigma)\right|\left|(\nabla_\Sigma \mathcal{X})(\sigma')\right|\mid \mathcal{X}(\sigma)=\mathcal{X}(\sigma')=0\right]
\\&=\tilde{K}_{2;\Sigma}(\sigma,\sigma'),
\end{align*}
we obtain
\begin{equation*}
\mathbb{E}[\mathcal{L}^2]
=\iint_{\Sigma^2}\tilde{K}_{2;\Sigma}(\sigma,\sigma')d\sigma d\sigma'
\end{equation*}
as claimed.
\end{proof}
The quantity $\tilde{K}_{2;\Sigma}$ \eqref{K2tS} will be called the ``two-point correlation function of $\left.\mathcal{X}\right|_\Sigma$'', as a generalisation of the {\em two-point function} or {\em second intensity} $\tilde{K_2}:\mathbb{R}^3\times \mathbb{R}^3\to\mathbb{R}$,
\begin{equation*}
\tilde{K_2}(x,y):=\phi_{\mathcal{X}(x),\mathcal{X}(y)}(0,0)\cdot\mathbb{E}\left[|\nabla \mathcal{X}(x)|\cdot|\nabla \mathcal{X}(y)| \ \big| \ \mathcal{X}(x)=\mathcal{X}(y)=0\right],
\end{equation*}
to random fields defined on a manifold.

Proposition \ref{krvarlen} is a Kac-Rice formula for the usual second moment, and not for the factorial second moment. This is consistent with Kac-Rice formulas for random fields defined on $\mathbb{R}^n$ \cite[p. 134]{azawsc}, as may be seen for instance by comparing \cite[Theorem 6.3]{azawsc} with \cite[Theorem 6.9]{azawsc}.

\section{Approximate Kac-Rice formula: proof of Proposition \ref{approxKR}}
\label{secappkr}
In the present section we establish the approximate Kac-Rice formula of Proposition \ref{approxKR}. We will require a few technical lemmas: the proof of these is deferred to appendix \ref{appa}.
\subsection{An expression for the (scaled) two-point correlation function $K_{2;\Sigma}$}

Recall our notation \eqref{a}
\begin{equation*}
a(\sigma):=(\nabla_\Sigma F)(\sigma)
\end{equation*}
for the surface gradient and let $a'=a(\sigma')$. One has
\begin{equation}
\label{acoo}
a_i=(\nabla F)_i-\langle\nabla F,\overrightarrow{n}\rangle n_i, \qquad\qquad i=1,2,3,
\end{equation}
where $\overrightarrow{n}=(n_1,n_2,n_3)$ is the unit normal to the surface at the point $\sigma$. At least one coordinate of $\overrightarrow{n}$, w.l.o.g. $n_3$, is non-zero hence
\begin{equation*}
a_3=-\frac{n_1}{n_3}a_1-\frac{n_2}{n_3}a_2.
\end{equation*}
We may thus write
\begin{align}
\label{k2tstep2}
\notag&\tilde{K}_{2;\Sigma}(\sigma,\sigma')
=\phi_{F(\sigma),F(\sigma')}(0,0)
\cdot
\mathbb{E}[|a(\sigma)|\cdot|a(\sigma')| \ \big| \ F(\sigma)=F(\sigma')=0]
\\\notag&=\frac{1}{2\pi\sqrt{1-r^2}}\mathbb{E}
\left[
\left\{\left((n_1^2+n_3^2)a_1^2+(n_2^2+n_3^2)a_2^2+2n_1n_2a_1a_2\right)/n_3^2\right\}^{1/2}\cdot
\right.
\\&\notag
\left.
\cdot
\left\{\left(({n'_1}^2+{n'_3}^2){a'_1}^2+({n'_2}^2+{n'_3}^2){a'_2}^2+2n'_1n'_2a'_1a'_2\right)/{n'_3}^2\right\}^{1/2} \ \big|
\right.
\\&
\left.
\big| \ F(\sigma)=F(\sigma')=0
\right].
\end{align}

In order to rewrite \eqref{k2tstep2} in a more convenient way, we will need some extra notation. Bearing in mind \eqref{acoo}, the covariance matrix of $a$ is given by $M\cdot\Omega$, where $M=4\pi^2m/3$ and
\begin{equation}
\label{omegafull}
\Omega(\sigma):=\begin{pmatrix}
n_2^2+n_3^2 & -n_1n_2 & -n_1n_3
\\ -n_1n_2 & n_1^2+n_3^2 & -n_2n_3
\\ -n_1n_3 & -n_2n_3 & n_1^2+n_2^2
\end{pmatrix}.
\end{equation}
We will denote $\Omega'=\Omega(\sigma')$. Recall the expression \eqref{rF} for the covariance function $r$.
\begin{defin}
\label{defdh}
	Define the row vector
	\begin{equation*}
	D=D(\sigma,\sigma'):=
	\left(\frac{\partial r}{\partial \sigma_1},
	\frac{\partial r}{\partial \sigma_2},
	\frac{\partial r}{\partial \sigma_3}\right)
	=\frac{2\pi i}{N}
	\sum_{\mu\in\mathcal{E}} e^{2\pi i\langle\mu,\sigma-\sigma'\rangle}\cdot\mu,
	\end{equation*}
	where for $j=1,2,3$ we have computed the partial derivatives
	\begin{equation*}
	\frac{\partial r}{\partial x_j}(x)=\frac{2\pi i}{N}\sum_{\mu\in\mathcal{E}} e^{2\pi i\langle\mu,x\rangle}\mu^{(j)},
	\qquad\qquad
	\mu=(\mu^{(1)},\mu^{(2)},\mu^{(3)}).
	\end{equation*}
	Let
	\begin{equation*}
	H=H(\sigma,\sigma'):=H_{r}
	=-\frac{4\pi^2}{N}
		\sum_{\mu\in\mathcal{E}} e^{2\pi i\langle\mu,\sigma-\sigma'\rangle}\cdot\mu^T\mu
	\end{equation*}
	be the (symmetric) Hessian matrix of $r$. We also define the matrix
	\begin{equation}
	\label{matL}
	L:=\begin{pmatrix}
	1 & 0
	\\ 0 & 1
	\\ 0 & 0
	\end{pmatrix}.
	\end{equation}
\end{defin}
Note that
	\begin{equation*}
	\left(\frac{\partial r}{\partial \sigma'_1},
	\frac{\partial r}{\partial \sigma'_2},
	\frac{\partial r}{\partial \sigma'_3}\right)
	=D(\sigma',\sigma)=-D(\sigma',\sigma)
	\end{equation*}
and that $H(\sigma',\sigma)=H(\sigma,\sigma')$.

\begin{lemma}
\label{lecovmat}
The covariance matrix $\Phi$ of the Gaussian random vector
\begin{equation*}
\left(
F(\sigma),F(\sigma'),
a_1(\sigma), 
a_2(\sigma), 
a_1(\sigma'),
a_2(\sigma')
\right)
\end{equation*}
is given by
\begin{equation*}
\Phi_{6\times 6}=
\begin{pmatrix}
1 & r & 0 & -D\Omega'L\\
r & 1 & D\Omega L & 0\\
0 & L^T\Omega D^T & ML^T\Omega L & -L^T\Omega H\Omega'L\\
-L^T\Omega'D^T & 0 & -L^T\Omega'H\Omega L & ML^T\Omega'L
\end{pmatrix},
\end{equation*}
where $M=4\pi^2 m/3$, $\Omega$ is given by \eqref{omegafull}, and $D,H,L$ are as in Definition \ref{defdh}.
\end{lemma}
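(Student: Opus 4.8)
The plan is to exploit joint Gaussianity and reduce the lemma to a systematic bookkeeping of covariance entries. Since $F$ is a smooth (in particular $C^1$) centred Gaussian field, the partial derivatives $\partial_{\sigma_j}F(\sigma)$ are jointly Gaussian with the values $F(\sigma),F(\sigma')$, and the surface-gradient components are linear images of the full gradient: writing $P:=I_3-\overrightarrow{n}\,\overrightarrow{n}^{\,T}$ for the orthogonal projection onto $T_\sigma\Sigma$, the relation \eqref{acoo} reads $a=P\nabla F$. Using $|\overrightarrow{n}|^2=n_1^2+n_2^2+n_3^2=1$ one checks entrywise that $P=\Omega$, with $\Omega$ as in \eqref{omegafull}; hence $a=\Omega\nabla F$ and $(a_1,a_2)^T=L^T\Omega\nabla F$ with $L$ as in \eqref{matL}. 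Consequently the whole vector is Gaussian, and it suffices to compute its pairwise covariances, which I organise into the $1,1,2,2$ block pattern of $\Phi$.

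The computation rests on the standard identity that, for a smooth Gaussian field, the covariance of derivatives equals the corresponding derivatives of the covariance, $\mathbb{E}[\partial_{\sigma_i}F(\sigma)\,\partial_{\tau_j}F(\tau)]=\partial_{\sigma_i}\partial_{\tau_j}r(\sigma,\tau)$, together with the fact that $r$ depends on $\sigma-\sigma'$ only. From Definition \ref{defdh} this yields the four rules I will use: (i) same-point independence of value and gradient, $\mathbb{E}[F(\sigma)\partial_{\sigma_i}F(\sigma)]=\tfrac12\partial_{\sigma_i}\mathbb{E}[F(\sigma)^2]=0$; (ii) $\mathbb{E}[F(\sigma)(\nabla F(\sigma'))^T]=-D$ and $\mathbb{E}[F(\sigma')(\nabla F(\sigma))^T]=D$, from $\nabla_{\sigma'}r=-D$ and $\nabla_\sigma r=D$; (iii) $\mathbb{E}[\nabla F(\sigma)\,\nabla F(\sigma)^T]=MI_3$, i.e. $\nabla F(\sigma)\sim\mathcal{N}(0,MI_3)$, which follows from the symmetry of $\mathcal{E}$ under coordinate permutations and sign changes (so that $\sum_{\mu}\mu^{(i)}\mu^{(j)}=\tfrac13\delta_{ij}Nm$), as recorded in \cite[(4.1)]{rudwi2}, whence $\mathrm{Cov}(a)=M\Omega$; and (iv) the cross-point gradient covariance $\mathbb{E}[\nabla F(\sigma)\,\nabla F(\sigma')^T]=-H$, since $\partial_{\sigma_i}\partial_{\sigma'_j}r=-\partial_i\partial_j R$, where $R$ denotes the difference-variable covariance and $H=(\partial_i\partial_j R)(\sigma-\sigma')$.

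With these in hand the blocks assemble by bilinearity. The top-left $2\times 2$ block is $\left(\begin{smallmatrix}1 & r\\ r & 1\end{smallmatrix}\right)$ by unit variance and \eqref{rF}. Rule (i) kills the $F(\sigma)$--$(a_1,a_2)(\sigma)$ and $F(\sigma')$--$(a_1,a_2)(\sigma')$ blocks (the two zero blocks). For the mixed-point value--gradient blocks, rule (ii) gives $\mathbb{E}[F(\sigma)(a_1,a_2)(\sigma')]=\mathbb{E}[F(\sigma)(\nabla F(\sigma'))^T]\,\Omega'L=-D\Omega'L$ and likewise $\mathbb{E}[F(\sigma')(a_1,a_2)(\sigma)]=D\Omega L$. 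For the gradient--gradient blocks, rule (iii) gives $\mathrm{Cov}((a_1,a_2)(\sigma))=L^T(M\Omega)L=ML^T\Omega L$ and $\mathrm{Cov}((a_1,a_2)(\sigma'))=ML^T\Omega'L$, while rule (iv) gives the cross block $L^T\Omega(-H)\Omega'L=-L^T\Omega H\Omega'L$. The remaining lower-triangular entries are fixed by symmetry of $\Phi$ together with the symmetry of $\Omega,\Omega',H$, which turns transposed blocks into the displayed forms (e.g. $(D\Omega L)^T=L^T\Omega D^T$ and $(-L^T\Omega H\Omega'L)^T=-L^T\Omega' H\Omega L$). This recovers $\Phi$ exactly.

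The calculation is essentially routine; the only points requiring care are the sign bookkeeping — differentiation in the $\sigma'$ slot introduces a minus sign, which is responsible for the asymmetric placement of the minus signs in the off-diagonal blocks — and the identification $\Omega=I_3-\overrightarrow{n}\,\overrightarrow{n}^{\,T}$, which hinges on $|\overrightarrow{n}|=1$. One should also note that interchanging expectation with differentiation, and evaluating the second-moment lattice sums in rule (iii), are legitimate because $F$ is a finite trigonometric sum with smooth paths and $\mathcal{E}$ is finite.
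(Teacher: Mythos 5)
Your proof is correct and follows essentially the same route as the paper's: both compute the pairwise covariances from \eqref{acoo} together with the rule that covariances of (surface) gradients are the corresponding derivatives of the covariance function, with the sign flips coming from differentiation in the $\sigma'$ slot. Your observation that $\Omega=I_3-\overrightarrow{n}\,\overrightarrow{n}^{\,T}$, so that $a=\Omega\nabla F$ and every block then follows by bilinearity and the symmetry of $\Omega,\Omega',H$, merely packages more systematically the entrywise computations (e.g.\ of $\phi_{23}$, $\phi_{36}$) that the paper carries out by hand.
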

The proof of Lemma \ref{lecovmat} will be given in appendix \ref{appa}. For $r(\sigma,\sigma')\neq\pm 1$, we may apply \cite[Proposition 1.2]{azawsc} to rewrite \eqref{k2tstep2} as
\begin{equation*}
\tilde{K}_{2;\Sigma}(\sigma,\sigma')
\\=\frac{1}{2\pi\sqrt{1-r^2}}
\mathbb{E}\left[
|(\tilde{W_1},\tilde{W_2})|\cdot|(\tilde{W_3},\tilde{W_4})|
\right],
\end{equation*}
where $\tilde{W}\sim\mathcal{N}(0,\tilde{\Theta})$ and $\tilde{\Theta}$ is the reduced covariance matrix
\begin{multline*}
\tilde{\Theta}=
\begin{pmatrix}
ML^T\Omega L & -L^T\Omega H\Omega'L
\\
-L^T\Omega'H\Omega L & ML^T\Omega' L
\end{pmatrix}
\\-
\frac{1}{1-r^2}
\begin{pmatrix}
L^T\Omega D^TD\Omega L & rL^T\Omega D^TD\Omega' L \\ rL^T\Omega'D^TD\Omega L & L^T\Omega'D^TD\Omega'L
\end{pmatrix}.
\end{multline*}

We rescale the random Gaussian vector $\tilde{W}$ as $W:=\frac{1}{\sqrt{M}}\tilde{W}$. It follows that
\begin{equation*}
K_{2;\Sigma}(\sigma,\sigma'):=\frac{\tilde{K}_{2;\Sigma}(\sigma,\sigma')}{M}
=\frac{1}{2\pi\sqrt{1-r^2}}
\mathbb{E}\left[
|(W_1,W_2)|\cdot|(W_3,W_4)|
\right]
\end{equation*}
(having defined the scaled two-point function $K_{2;\Sigma}$), where $W\sim\mathcal{N}(0,\Theta)$, and $\Theta$ is the scaled covariance matrix
\begin{multline*}
\Theta=
\begin{pmatrix}
L^T\Omega L & -L^T\Omega H\Omega'L/M
\\
-L^T\Omega'H\Omega L/M & L^T\Omega' L
\end{pmatrix}
\\-
\frac{1}{(1-r^2)M}
\begin{pmatrix}
L^T\Omega D^TD\Omega L & rL^T\Omega D^TD\Omega' L \\ rL^T\Omega'D^TD\Omega L & L^T\Omega'D^TD\Omega'L
\end{pmatrix}.
\end{multline*}

One has
\begin{equation*}
L^T\Omega L=
\begin{pmatrix}
n_2^2+n_3^2& -n_1n_2 \\ -n_1n_2 &n_1^2+n_3^2
\end{pmatrix},
\end{equation*}
the upper left $2\times 2$ block of $\Omega$. We may find a {\em square root} $Q$ of $(L^T\Omega L)^{-1}$, i.e. a matrix satisfying
\begin{equation}
\label{Q^2}
Q^2=(L^T\Omega L)^{-1}.
\end{equation}
For instance, we may take explicitly
\begin{equation}
\label{Q}
Q(\sigma)=
\frac{1}{n_3^2+n_3}\cdot
\begin{pmatrix}
n_1^2+n_3^2+n_3 & n_1n_2 \\ n_1n_2 & n_2^2+n_3^2+n_3
\end{pmatrix}=Q^T.
\end{equation}

Define the Gaussian random vector
\begin{equation*}
\hat{W}:=
\begin{pmatrix}
Q & 0 \\ 0 & Q'
\end{pmatrix}
W,
\end{equation*}
with the shorthand $Q':=Q(\sigma')$. We obtain
\begin{equation*}
K_{2;\Sigma}=\frac{1}{2\pi\sqrt{1-r^2}}\mathbb{E}[|(\hat{W_1},\hat{W_2})|\cdot|(\hat{W_3},\hat{W_4})|],
\end{equation*}
where
\begin{equation*}
\hat{W}\sim\mathcal{N}(0,\hat{\Theta})
\end{equation*}
and $\hat{\Theta}$ is the covariance matrix
\begin{multline*}
\hat{\Theta}=
\begin{pmatrix}
I_2 & -QL^T\Omega H\Omega'LQ'/M
\\
-Q'L^T\Omega'H\Omega LQ/M & I_2
\end{pmatrix}
\\-
\frac{1}{(1-r^2)M}
\begin{pmatrix}
QL^T\Omega D^TD\Omega LQ & rQL^T\Omega D^TD\Omega' LQ' \\ rQ'L^T\Omega'D^TD\Omega LQ & Q'L^T\Omega'D^TD\Omega'LQ'
\end{pmatrix}.
\end{multline*}
\begin{defin}
\label{XYdef}
Let $X,Y,X',Y'$ be the following $2\times 2$ matrices:
\begin{align*}
& X:=X(\sigma,\sigma')=-
\frac{1}{(1-r^2)M}QL^T\Omega D^TD\Omega LQ,
\\
& X':=X(\sigma',\sigma),
\\
& Y:=Y(\sigma,\sigma')=-\frac{1}{M}\left[QL^T\Omega\left(H+\frac{r}{1-r^2}D^TD\right)\Omega'LQ'\right],
\\
& Y':=Y(\sigma',\sigma),
\end{align*}
where $\Omega$ is given by \eqref{omegafull}, $D,H,L$ are as in Definition \ref{defdh}, $Q$ is given by \eqref{Q}, and $Q'=Q(\sigma')$.
\end{defin}
We may now rewrite
\begin{equation}
\label{hattheta}
\hat{\Theta}=I_4+
\begin{pmatrix}
X & Y \\ Y' & X'
\end{pmatrix}.
\end{equation}
\noindent
We have established the following result.
\begin{lemma}
\label{leK2}
The (scaled) $2$-point correlation function has the expression
\begin{equation*}
K_{2;\Sigma}=\frac{1}{2\pi\sqrt{1-r^2}}\mathbb{E}[|(\hat{W_1},\hat{W_2})|\cdot|(\hat{W_3},\hat{W_4})|],
\end{equation*}
where
\begin{equation*} \hat{W}=(\hat{W_1},\hat{W_2},\hat{W_3},\hat{W_4})\sim\mathcal{N}(0,\hat{\Theta}),
\end{equation*}
and $\hat{\Theta}$ is given by \eqref{hattheta}.
\end{lemma}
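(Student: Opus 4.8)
The plan is to reduce the conditional Gaussian expectation defining $\tilde K_{2;\Sigma}$ in Proposition \ref{krvarlen} to an expectation against a standard-normalised covariance, performing the conditioning and the whitening in sequence. First I would eliminate the third component of each surface gradient: since at least one entry of $\overrightarrow n$ is nonzero, the relation \eqref{acoo} gives $a_3=-(n_1/n_3)a_1-(n_2/n_3)a_2$, so that $|a(\sigma)|^2$ becomes the quadratic form in $(a_1,a_2)$ displayed in \eqref{k2tstep2}. A short computation shows that the matrix of this quadratic form is exactly $(L^T\Omega L)^{-1}$, using $|\overrightarrow n|=1$ and $\det(L^T\Omega L)=n_3^2$; this observation is what ultimately lets a single matrix $Q$ both whiten the covariance and turn the quadratic-form norm into a Euclidean norm.

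Next I would assemble the joint law of the six-dimensional Gaussian vector $(F(\sigma),F(\sigma'),a_1(\sigma),a_2(\sigma),a_1(\sigma'),a_2(\sigma'))$, whose covariance $\Phi$ is the content of Lemma \ref{lecovmat}: the gradient blocks $ML^T\Omega L$ and the coupling blocks $-L^T\Omega H\Omega'L$ come from the Hessian $H$ and the projection $\Omega$, while the mixed $F$-gradient blocks involving $D\Omega L$ and $D\Omega'L$ come from the first-derivative vector $D$ of Definition \ref{defdh}. Because $r(\sigma,\sigma')\neq\pm 1$ off the diagonal, the block $(F(\sigma),F(\sigma'))$ is nondegenerate, and I may apply the Gaussian regression formula \cite[Proposition 1.2]{azawsc} to condition on $F(\sigma)=F(\sigma')=0$. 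This yields the reduced $4\times 4$ covariance $\tilde\Theta$ (the Schur complement subtracting the $D$-terms divided by $1-r^2$) and factors out the joint density $\phi_{F(\sigma),F(\sigma')}(0,0)=1/(2\pi\sqrt{1-r^2})$.

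Finally I would carry out the two linear changes of variable. Rescaling the conditioned gradient vector by $1/\sqrt M$, with $M=4\pi^2m/3$, pulls the overall factor $M$ out of the product of norms and defines $K_{2;\Sigma}=\tilde K_{2;\Sigma}/M$ with covariance $\Theta$; then the block transformation $\mathrm{diag}(Q,Q')$, with $Q$ the explicit symmetric square root \eqref{Q} of $(L^T\Omega L)^{-1}$, normalises the two diagonal blocks to $I_2$ and, by the identity noted above, sends each quadratic-form norm to the Euclidean norm of the transformed pair. Grouping the remaining off-diagonal and correction terms into $X,Y,X',Y'$ as in Definition \ref{XYdef} produces precisely $\hat\Theta=I_4+\begin{pmatrix}X & Y \\ Y' & X'\end{pmatrix}$, so that $\hat W\sim\mathcal N(0,\hat\Theta)$ and the stated formula for $K_{2;\Sigma}$ follows.

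I expect the main obstacle to be the bookkeeping rather than any conceptual difficulty: one must verify Lemma \ref{lecovmat} by differentiating \eqref{rF} and correctly placing $D$, $H$, and $\Omega$ in the six cross-covariances, and then check that the regression subtraction followed by the $Q$-conjugation reproduces the matrices of Definition \ref{XYdef} exactly, including the factor $r/(1-r^2)$ multiplying $D^TD$ inside $Y$. The one point requiring genuine care is confirming that $Q$ plays its double role consistently inside the expectation, i.e. that whitening the covariance and Euclideanising the norm are effected by the same transformation; this rests on the identity that the norm quadratic form equals $(L^T\Omega L)^{-1}$, the inverse of the unscaled gradient covariance.
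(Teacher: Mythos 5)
Your proposal is correct and follows essentially the same route as the paper: eliminate $a_3$ via the normal relation, record the $6\times 6$ covariance of $(F(\sigma),F(\sigma'),a_1,a_2,a_1',a_2')$ (Lemma \ref{lecovmat}), apply Gaussian regression to condition on $F(\sigma)=F(\sigma')=0$, rescale by $1/\sqrt{M}$, and conjugate by $\mathrm{diag}(Q,Q')$. Your explicit observation that the quadratic form in \eqref{k2tstep2} has matrix exactly $(L^T\Omega L)^{-1}=Q^2$ (via $\det(L^T\Omega L)=n_3^2$), so that the single matrix $Q$ simultaneously whitens the covariance and converts the quadratic-form norm into the Euclidean norm of $\hat W$, is the key point the paper leaves implicit, and you have it right.
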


\subsection{Asymptotics for $K_{2;\Sigma}$}
We will need the following lemma, to be proven in appendix \ref{appa}.
\begin{lemma}
\label{XY}
The entries of $X,X',Y,Y'$ are uniformly bounded (with respect to $\sigma,\sigma'$):
\begin{equation}
\label{XYl1}
X,X',Y,Y'\ll_{\Sigma} 1.
\end{equation}
\end{lemma}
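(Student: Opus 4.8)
The plan is to avoid estimating the four blocks entry-by-entry---which fails because each of $X,X',Y,Y'$ carries a prefactor $1/((1-r^2)M)$ and the factor $1/(1-r^2)$ is genuinely unbounded as $r\to\pm1$---and instead to exploit the fact that $\hat{\Theta}$ is a genuine covariance matrix. The algebraic factors are harmless ($D\ll\sqrt m$, $H\ll m$, and $\Omega,L,Q\ll_\Sigma 1$, with $M\asymp m$), so it is precisely the $1/(1-r^2)$ singularity that must be tamed, and positive-semidefiniteness of $\hat{\Theta}$ is exactly the structural input that does this.

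First I would record two facts. By Lemma \ref{leK2}, $\hat{\Theta}$ is the covariance matrix of the Gaussian vector $\hat{W}$, hence $\hat{\Theta}\succeq 0$; and by \eqref{hattheta}, $\hat{\Theta}=I_4+\begin{pmatrix}X&Y\\Y'&X'\end{pmatrix}$, so the diagonal blocks read $I_2+X$ and $I_2+X'$. Next I would extract the outer-product structure of the diagonal blocks. Since $Q=Q^T$ and $\Omega=\Omega^T$, one has $QL^T\Omega D^T=(D\Omega LQ)^T$, and $D$ is real (it is the gradient of the real covariance $r$), so writing $v:=D\Omega LQ$ as a real $1\times 2$ vector gives $X=-\frac{1}{(1-r^2)M}\,v^Tv$. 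As $v^Tv\succeq 0$ and $1-r^2>0$ on the locus where the matrices are defined, $X$ is negative semi-definite; in particular its diagonal entries satisfy $X_{11},X_{22}\le 0$. The identical argument applied at $(\sigma',\sigma)$ yields $X'_{11},X'_{22}\le 0$.

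Then I would combine the two. The diagonal entries of $\hat{\Theta}$ equal $1$ plus the corresponding diagonal entry of $X$ or $X'$; positive-semidefiniteness forces $\hat{\Theta}_{ii}\ge 0$, while the previous step gives $\hat{\Theta}_{ii}\le 1$. Hence $0\le\hat{\Theta}_{ii}\le 1$, so the diagonal entries of $X,X'$ lie in $[-1,0]$. For every off-diagonal entry I would invoke the elementary bound $|\hat{\Theta}_{ij}|\le\sqrt{\hat{\Theta}_{ii}\hat{\Theta}_{jj}}\le 1$, valid for any positive-semidefinite matrix. Since the off-diagonal entries of $\begin{pmatrix}X&Y\\Y'&X'\end{pmatrix}=\hat{\Theta}-I_4$ are exactly off-diagonal entries of $\hat{\Theta}$, this bounds every entry of $Y,Y'$ and every off-diagonal entry of $X,X'$ by $1$ in absolute value. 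Together with the diagonal bound this yields \eqref{XYl1}, in fact with an absolute implied constant.

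The main obstacle is conceptual rather than computational: one must resist a direct estimate and instead recognise that the normalisation by $Q$ (the square root of $(L^T\Omega L)^{-1}$, cf. \eqref{Q^2}) is engineered precisely so that the diagonal blocks of $\hat{\Theta}$ read $I_2+X$ with $X\preceq 0$, after which boundedness is forced by the covariance structure. The one point to verify with care is that $\hat{\Theta}$ is genuinely positive semi-definite: this holds because $\hat{\Theta}$ is a congruence transform of $\tilde{\Theta}/M$, and $\tilde{\Theta}$ is a bona fide conditional covariance matrix exactly on the set $|r|<1$ where the Gaussian regression behind Lemma \ref{leK2} is valid---which is the same set on which $X,X',Y,Y'$ are defined.
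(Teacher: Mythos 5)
Your proof is correct and follows essentially the same route as the paper's: both arguments use that $\hat{\Theta}$ is a covariance matrix, reduce to the diagonal entries via Cauchy--Schwarz, and observe that $X=-\frac{1}{(1-r^2)M}(D\Omega LQ)^T(D\Omega LQ)$ is negative semi-definite so that $0\le 1+X_{ii}\le 1$. If anything, your write-up is slightly more careful than the paper's one-line version (which loosely says ``the entries of $X,X'$ are negative'' where only the diagonal entries need to be, the off-diagonal ones being handled by Cauchy--Schwarz anyway), and your closing remark correctly identifies why $\hat{\Theta}\succeq 0$ on the set $|r|<1$.
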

To write an asymptotic expression for the scaled two-point function, we need the Taylor expansion of a perturbed $4\times 4$ standard Gaussian matrix
\begin{equation*}
I_4+
\begin{pmatrix}
X & Y \\ Y' & X'
\end{pmatrix},
\end{equation*}
to the second order. The case where $X'=X$ and $Y'=Y$, to the fourth order, was treated in \cite[Lemma 5.1]{krkuwi} ($4\times 4$ matrix) and \cite[Lemma 5.8]{benmaf} ($6\times 6$). In \cite{krkuwi, benmaf} the expansion up to order four is needed in light of a cancellation phenomenon known as `arithmetic Berry cancellation'; in this work we need only an expansion to order two.
\begin{lemma}
\label{lemma5.1}
Suppose
\begin{equation*} \hat{W}=(\hat{W_1},\hat{W_2},\hat{W_3},\hat{W_4})\sim\mathcal{N}(0,\hat{\Theta}),
\end{equation*}
where
\begin{equation*}
\hat{\Theta}=I_4+
\begin{pmatrix}
X & Y \\ Y' & X'
\end{pmatrix}
\end{equation*}
is positive definite with real entries, and the $2\times 2$ blocks $X,X',Y,Y'$ are symmetric. 
Then
\begin{multline*}
\mathbb{E}[|(\hat{W_1},\hat{W_2})|\cdot|(\hat{W_3},\hat{W_4})|]
=\frac{\pi}{2}\left(1+\frac{tr(X)}{4}+\frac{tr(X')}{4}+\frac{tr(Y'Y)}{8}\right)
\\+O(X^2+X'^2+Y^4+Y'^4).
\end{multline*}
\end{lemma}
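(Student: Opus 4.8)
The plan is to expand $\mathbb{E}[g(\hat W)]$, where $g(w):=|(w_1,w_2)|\cdot|(w_3,w_4)|$, around the standard case $\hat\Theta=I_4$, treating the perturbation $\Delta:=\begin{pmatrix}X&Y\\Y'&X'\end{pmatrix}$ as small (its entries are uniformly controlled by Lemma \ref{XY}). Since $\hat\Theta$ is a genuine covariance matrix its symmetry forces $Y'=Y^T$, so that $tr(Y'Y)=\|Y\|_F^2\ge 0$, which is why the answer can be written through this single invariant. Writing the density of $\hat W$ as $\phi_{\hat\Theta}=\phi_I\cdot u$ with likelihood ratio $u(w)=(\det\hat\Theta)^{-1/2}\exp\!\big(-\tfrac12 w^T(\hat\Theta^{-1}-I)w\big)$, I would reduce the claim to computing $\mathbb{E}_I[g\cdot u]$ against a standard Gaussian. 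The crucial point is that $g$ grows only linearly, so $\mathbb{E}_I[g\cdot P(w)]$ is finite for every polynomial $P$; this lets me keep $g$ intact and expand only the smooth factor $u$, thereby sidestepping the non-integrable singularities of the derivatives $\partial_{ijkl}g$ near the origin that obstruct a naive interpolation (Price-type) argument.

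Before computing, I would record a parity reduction. The sign change $S=\mathrm{diag}(1,1,-1,-1)$ leaves $g$ invariant (as $|(w_3,w_4)|$ is even), while conjugation sends $\Delta\mapsto\begin{pmatrix}X&-Y\\-Y'&X'\end{pmatrix}$; hence $\mathbb{E}_{\hat\Theta}[g]$ is an \emph{even} function of the pair $(Y,Y')$. Consequently every term of odd total degree in $Y,Y'$ vanishes: in particular there is no first-order $Y$-contribution and no mixed $X$–$Y$ term at second order, which is exactly why $Y,Y'$ enter only through $tr(Y'Y)$ and why the error involving them is $O(Y^4+Y'^4)$ rather than $O(Y^2)$.

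Expanding $u$ via $\log\det(I+\Delta)=tr\Delta-\tfrac12 tr\Delta^2+O(\Delta^3)$ and $\hat\Theta^{-1}-I=-\Delta+\Delta^2+O(\Delta^3)$, I would evaluate the surviving terms by elementary polar-coordinate integrals against $\phi_I$. The zeroth order is $\mathbb{E}_I[g]=(\sqrt{\pi/2})^2=\pi/2$. The first-order (diagonal-block) terms use $\mathbb{E}_I[|(w_1,w_2)|\,w_iw_j]=\tfrac32\sqrt{\pi/2}\,\delta_{ij}$ and combine with the Jacobian factor $-\tfrac12 tr\Delta$ to give precisely $\tfrac\pi8(tr X+tr X')=\tfrac\pi2\cdot\tfrac{tr X+tr X'}{4}$. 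For the quadratic $Y$-term I would collect the three contributions arising from $-\tfrac12 w^T\Delta^2 w$, from $\tfrac18(w^T\Delta w)^2$, and from $\tfrac14 tr\Delta^2$; using $\mathbb{E}_I[1/|(w_1,w_2)|]=\sqrt{\pi/2}$, the value $\tfrac32\sqrt{\pi/2}$ above, and $Y'=Y^T$, these three pieces sum to $\tfrac{\pi}{16}tr(Y'Y)=\tfrac\pi2\cdot\tfrac{tr(Y'Y)}{8}$, as claimed. (The same computation with $X=X'$, $Y=Y'$ carried to fourth order is \cite[Lemma 5.1]{krkuwi}; here only the second-order truncation is needed, but the primed and unprimed blocks must be tracked separately.)

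The main obstacle is the rigorous, uniform control of the remainder. I would write the second-order Taylor remainder of $u$ in integral (Lagrange) form and dominate it: for $\|\Delta\|$ below a fixed threshold, positive-definiteness of $\hat\Theta$ together with the uniform bound $X,X',Y,Y'\ll_\Sigma 1$ of Lemma \ref{XY} confines $\hat\Theta^{-1}$ to a compact set (say $\hat\Theta^{-1}\preceq 2I$), so that $g(w)\,u(w)$ and every remainder integrand are bounded by a fixed integrable multiple of $\phi_I$; dominated convergence then yields a remainder of order $\|\Delta\|^3$. Finally, having already discarded the odd-in-$Y$ pieces by parity, every remaining monomial $X^aX'^bY^cY'^d$ has either $a+b\ge 2$ or $c+d\ge 4$, except for the degree-three mixed terms (such as $X\,Y^2$ with $c+d=2$); the elementary inequality $|X|\,\|Y\|^2\le\tfrac12\big(X^2+\|Y\|^4\big)$ together with boundedness repackages all of these into $O(X^2+X'^2+Y^4+Y'^4)$, which completes the proof.
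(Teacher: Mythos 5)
Your argument is correct in substance but follows a genuinely different route from the paper. The paper uses Berry's method: it represents $\mathbb{E}[|(\hat W_1,\hat W_2)|\cdot|(\hat W_3,\hat W_4)|]$ as $\frac{1}{2\pi}\iint_{\mathbb{R}_+^2}\xi(t,s)\,(ts)^{-3/2}\,dt\,ds$ with $\xi$ built from $\eta(t,s)=\det(I_4+J)^{-1/2}$, Taylor-expands the block determinant, and then integrates in $(t,s)$; all integrability issues are absorbed into the explicit kernel and the observation $\xi(t,s)=O(ts)$. You instead perturb the density directly, writing $\phi_{\hat\Theta}=\phi_{I}\cdot u$ and expanding the likelihood ratio, which trades the $(t,s)$-integral for a handful of explicit Gaussian moments ($\mathbb{E}[|W|]=\sqrt{\pi/2}$, $\mathbb{E}[|W|W_iW_j]=\tfrac32\sqrt{\pi/2}\,\delta_{ij}$ for a standard planar Gaussian). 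I checked your coefficients: the first-order terms give $\tfrac{3\pi}{8}\mathrm{tr}(X)-\tfrac{\pi}{4}\mathrm{tr}(X)=\tfrac{\pi}{8}\mathrm{tr}(X)$ per diagonal block, and the three quadratic-in-$Y$ contributions sum to $\left(-\tfrac34+\tfrac14+\tfrac{9}{16}\right)\pi\,\mathrm{tr}(Y'Y)=\tfrac{\pi}{16}\mathrm{tr}(Y'Y)$, matching the claim. Your parity reduction via $S=\mathrm{diag}(1,1,-1,-1)$ is a clean structural explanation, absent from the paper, of why no odd-in-$(Y,Y')$ terms and no mixed $X$--$Y$ terms appear; keeping $g$ intact and expanding only the smooth factor $u$ is also a legitimate way to avoid the singular derivatives of $g$ at the origin.

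One point needs tightening. A remainder that is merely $O(\|\Delta\|^3)$ is \emph{not} of the form $O(X^2+X'^2+Y^4+Y'^4)$: take $X=X'=0$ and $\|Y\|$ small, and $\|Y\|^3$ dominates $\|Y\|^4$. To run your ``remaining monomials'' argument you must carry the Taylor expansion of $u$ (equivalently, of the expectation as a function of the blocks) to degree three, so that the integral-form remainder is $O(\|\Delta\|^4)$, which \emph{is} $\ll X^2+X'^2+Y^4+Y'^4$ once the uniform boundedness of the blocks is used; the explicit cubic monomials are then disposed of exactly as you describe (those odd in $(Y,Y')$ vanish by parity, and the surviving $X^3$-type and $XY^2$-type terms are absorbed via boundedness and $|X|\,\|Y\|^2\le\tfrac12(X^2+\|Y\|^4)$). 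This is a one-line repair, but as written the step from ``remainder of order $\|\Delta\|^3$'' to ``every remaining monomial'' is not justified, since a generic $O(\|\Delta\|^3)$ error is not a sum of monomials.
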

\noindent
The proof of Lemma \ref{lemma5.1} will be given in appendix \ref{appa}. Assuming it, we obtain the following asymptotic for the scaled two-point function $K_{2;\Sigma}$.
\begin{prop}
\label{prop4.5}
For $\sigma,\sigma'$ such that $r(\sigma,\sigma')$ is bounded away from $\pm 1$, we have the following asymptotic for the (scaled) two point correlation function:
\begin{multline*}
K_{2;\Sigma}(\sigma,\sigma')=\frac{1}{4}
\left[
1+\frac{1}{2}r^2+\frac{tr(X)}{4}+\frac{tr(X')}{4}+\frac{tr(Y'Y)}{8}
\right]
\\+O(r^4+X^2+X'^2+Y^4+Y'^4+r^2(tr(X)+tr(X')+tr(Y'Y))),
\end{multline*}
with $X,X',Y,Y'$ as in Definition \ref{XYdef} \footnote{Here and elsewhere we will use the shorthand $O(A)$ for $O(tr(A))$.}.
\end{prop}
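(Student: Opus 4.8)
The plan is to feed the exact expression for $K_{2;\Sigma}$ from Lemma \ref{leK2} into the second-order Gaussian expansion of Lemma \ref{lemma5.1}, and then to Taylor-expand the scalar prefactor $(1-r^2)^{-1/2}$; the proposition then follows by multiplying the two expansions and collecting terms. Before expanding I would verify that the matrix $\hat{\Theta}$ of \eqref{hattheta} satisfies the hypotheses of Lemma \ref{lemma5.1}. Since $Q=Q^T$ by \eqref{Q}, $\Omega=\Omega^T$ by \eqref{omegafull}, and $D^TD$ is symmetric, the block $X$ (and likewise $X'$) is symmetric; using the relations $H(\sigma',\sigma)=H(\sigma,\sigma')$ and $D(\sigma',\sigma)=-D(\sigma,\sigma')$ recorded after Definition \ref{defdh}, one checks that $Y'=Y^T$, so that $\hat{\Theta}$ is genuinely symmetric with real entries. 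It is positive definite precisely because it is the (scaled) covariance matrix of a non-degenerate Gaussian vector, which is the case exactly when $r=r(\sigma,\sigma')$ is bounded away from $\pm1$. Finally, Lemma \ref{XY} supplies the uniform bound $X,X',Y,Y'\ll_\Sigma 1$; together with a uniform lower bound on the eigenvalues of $\hat{\Theta}$ (again from $r$ bounded away from $\pm1$), this guarantees that the $O$-constant produced by Lemma \ref{lemma5.1} is uniform in $(\sigma,\sigma')$.

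Granting the hypotheses, Lemma \ref{lemma5.1} yields
\[
\mathbb{E}[|(\hat{W_1},\hat{W_2})|\cdot|(\hat{W_3},\hat{W_4})|]=\frac{\pi}{2}\left(1+\frac{tr(X)}{4}+\frac{tr(X')}{4}+\frac{tr(Y'Y)}{8}\right)+O(X^2+X'^2+Y^4+Y'^4).
\]
On the analytic side, since $r$ is bounded away from $\pm1$ the map $r\mapsto(1-r^2)^{-1/2}$ is smooth on the relevant range, and I would use the expansion $(1-r^2)^{-1/2}=1+\tfrac12 r^2+O(r^4)$ with a remainder that is uniform there. Substituting both expansions into the identity of Lemma \ref{leK2} and using $\frac{1}{2\pi}\cdot\frac{\pi}{2}=\frac14$ produces the main term
\[
\frac{1}{4}\left(1+\frac{1}{2}r^2+\frac{tr(X)}{4}+\frac{tr(X')}{4}+\frac{tr(Y'Y)}{8}\right),
\]
which is exactly the claimed leading part.

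The remaining work is bookkeeping of the error terms arising from the product. Multiplying $\tfrac12 r^2$ against the linear trace terms contributes $O(r^2(tr(X)+tr(X')+tr(Y'Y)))$; multiplying $O(r^4)$ against the bounded factor coming from Lemma \ref{lemma5.1} contributes $O(r^4)$; and the remainder $O(X^2+X'^2+Y^4+Y'^4)$, multiplied by the bounded factor $1+\tfrac12 r^2+O(r^4)$, stays of the same order. All higher-order products (for instance $r^2$ times $X^2$) are dominated by, and hence absorbed into, the terms already listed, using the uniform boundedness from Lemma \ref{XY}. Collecting everything gives precisely the stated error $O(r^4+X^2+X'^2+Y^4+Y'^4+r^2(tr(X)+tr(X')+tr(Y'Y)))$.

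I expect no genuine obstacle in this particular proof: all of the analytic content is packaged into Lemma \ref{lemma5.1} (the second-order perturbation of a $4\times4$ Gaussian matrix, deferred to the appendix) and into the uniform bound of Lemma \ref{XY}. The single point requiring care is \emph{uniformity} of the error over the region where $r$ is bounded away from $\pm1$: without both the uniform boundedness of the blocks and the uniform positive-definiteness of $\hat{\Theta}$, the Taylor remainders in Lemma \ref{lemma5.1} and in $(1-r^2)^{-1/2}$ could degenerate, so these two inputs are exactly what make the pointwise asymptotic usable after integration over $\Sigma^2$ in the next section.
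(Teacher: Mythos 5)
Your proposal is correct and follows essentially the same route as the paper: substitute the exact expression from Lemma \ref{leK2} into the expansion of Lemma \ref{lemma5.1}, Taylor-expand $(1-r^2)^{-1/2}=1+\tfrac12 r^2+O(r^4)$, multiply, and absorb the cross terms into the stated error. Your additional verification of the hypotheses of Lemma \ref{lemma5.1} (symmetry of $X,X'$, the relation $Y'=Y^T$, positive definiteness and uniformity via Lemma \ref{XY}) is left implicit in the paper but is a sound and welcome precaution.
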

\begin{proof}
By assumption, $r(\sigma,\sigma')$ is bounded away from $\pm 1$, hence
$\frac{1}{\sqrt{1-r^2}}=1+\frac{1}{2}r^2+O(r^4)$. We apply Lemma \ref{lemma5.1} to expand the precise expression of the two-point function given by Lemma \ref{leK2}:
\begin{multline*}
K_{2;\Sigma}(\sigma,\sigma')=\frac{1}{2\pi \sqrt{1-r^2(x)}}\cdot\mathbb{E}[|(\hat{W_1},\hat{W_2})|\cdot|(\hat{W_3},\hat{W_4})|]
\\=\frac{1}{2\pi}\left(1+\frac{1}{2}r^2+O(r^4)\right)\cdot\left[\frac{\pi}{2}
\left(1+\frac{tr(X)}{4}+\frac{tr(X')}{4}+\frac{tr(Y'Y)}{8}\right)
\right.
\\
\left.
+O(X^2+X'^2+Y^4+Y'^4)\right],
\end{multline*}
hence the result of the present proposition.
\end{proof}

\subsection{Statement of further auxiliary lemmas}
The present section is dedicated to stating lemmas needed to prove Proposition \ref{approxKR}. The proofs of the lemmas will follow in section \ref{sec4mom}.

\begin{lemma}
\label{R4}
For $k\geq 0$, we define the $k$-th moment of the covariance function $r$ \eqref{rF} on the surface $\Sigma$,
\begin{equation}
\label{Rkeq}
\mathcal{R}_k(m):=\iint_{\Sigma^2}r^k(\sigma,\sigma')d\sigma d\sigma'.
\end{equation}
Assume $\Sigma$ is of nowhere zero Gauss-Kronecker curvature. Then for every $\epsilon>0$ we have the upper bound
\begin{equation}
\label{R4bd}
\mathcal{R}_4(m)\ll\frac{1}{m^{11/16-\epsilon}}.
\end{equation}
\end{lemma}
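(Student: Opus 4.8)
The plan is to pass to the lattice-point side, use the curvature of $\Sigma$ only through the decay of the Fourier transform of its surface measure, and reduce \eqref{R4bd} to a counting estimate for $4$-tuples of lattice points on $\sqrt m\mathcal{S}^2$. First I would expand $r^4$ via \eqref{rF} and integrate over $\Sigma^2$; writing $\widehat{d\sigma}(\nu):=\int_\Sigma e^{2\pi i\langle\nu,\sigma\rangle}d\sigma$, the integral over $\sigma,\sigma'$ factors and produces
\begin{equation*}
\mathcal{R}_4(m)=\frac{1}{N^4}\sum_{\mu_1,\mu_2,\mu_3,\mu_4\in\mathcal{E}}\left|\widehat{d\sigma}(\mu_1+\mu_2+\mu_3+\mu_4)\right|^2.
\end{equation*}
Here the hypothesis of nowhere vanishing Gauss-Kronecker curvature enters exactly once, to supply decay: by the standard stationary-phase estimate for the Fourier transform of the surface measure of a hypersurface of nonvanishing Gaussian curvature in $\mathbb{R}^3$, one has $|\widehat{d\sigma}(\nu)|\ll_\Sigma(1+|\nu|)^{-1}$. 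Grouping the $4$-tuples by the value $\nu=\mu_1+\mu_2+\mu_3+\mu_4$ then gives
\begin{equation*}
\mathcal{R}_4(m)\ll_\Sigma\frac{1}{N^4}\sum_{\nu\in\mathbb{Z}^3}\frac{r_4(\nu)}{(1+|\nu|)^2},\qquad r_4(\nu):=\#\{(\mu_1,\mu_2,\mu_3,\mu_4)\in\mathcal{E}^4:\textstyle\sum_i\mu_i=\nu\},
\end{equation*}
so the problem becomes estimating this weighted correlation sum, with $|\nu|\le 4\sqrt m$ on the support.

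I would first dispose of the diagonal term $\nu=0$. Using the symmetry $\mu\mapsto-\mu$ of $\mathcal{E}$, the quantity $r_4(0)$ equals the additive energy $\sum_w\rho(w)^2$, where $\rho(w):=\#\{(\mu,\mu')\in\mathcal{E}^2:\mu+\mu'=w\}$. Since $\rho(0)=N$ and, for $w\neq 0$, $\rho(w)$ counts lattice points on the circle cut on the sphere by a plane, the bound $\rho(w)\ll m^{o(1)}$ from section \ref{seclp} yields $r_4(0)\ll N^{2+o(1)}$. By \eqref{totnumlp3} the $\nu=0$ contribution is then $\ll A^2N^{2+o(1)}/N^4\ll m^{-1+o(1)}$, comfortably below the target. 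The Cauchy--Schwarz inequality moreover gives the uniform bound $r_4(\nu)\le r_4(0)\ll N^{2+o(1)}$ for all $\nu$.

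The dominant work is the off-diagonal sum $\sum_{\nu\neq 0}r_4(\nu)(1+|\nu|)^{-2}$, which I would treat by a dyadic decomposition in $|\nu|\in[2^j,2^{j+1})$ and summation by parts, reducing everything to the counting function $C(T):=\#\{(\mu_1,\mu_2,\mu_3,\mu_4)\in\mathcal{E}^4:|\sum_i\mu_i|\le T\}=\sum_{|\nu|\le T}r_4(\nu)$. With only the two crude inputs $r_4(\nu)\ll N^{2+o(1)}$ (for small $|\nu|$) and $\sum_\nu r_4(\nu)=N^4$ (for large $|\nu|$), together with $\sum_{0<|\nu|\le T}|\nu|^{-2}\ll T$ in $\mathbb{Z}^3$, balancing the two ranges at the crossover $T\approx m^{1/3}$ gives only $\mathcal{R}_4\ll m^{-2/3+o(1)}$, which falls short of \eqref{R4bd}. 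The improvement must come from a finer understanding of $C(T)$ at the bottleneck scale $|\sum_i\mu_i|\approx m^{1/3}$.

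This finer estimate for $C(T)$ is the main obstacle. Heuristically, modelling $\mathcal{E}$ as equidistributed on $\sqrt m\mathcal{S}^2$ (and analysing the pair-sums $\mu_1+\mu_2$, whose number with $|\mu_1+\mu_2|\approx R$ is $\approx R^2$) predicts a near-antipodal ``spike'' plus an equidistributed bulk, $C(T)\approx m+m^{1/2}T^3$, which would even yield $\mathcal{R}_4\ll m^{-1+o(1)}$; but no pointwise bound of that strength is available. Instead I would bound $C(T)$ rigorously by combining the circle bound $\rho(w)\ll m^{o(1)}$ with the effective cap/short-sum counting estimates of section \ref{seclp}, controlling how many pair-sums fall in a ball of radius $T$ about $-(\mu_3+\mu_4)$, and then optimise the dyadic sum against this bound. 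It is precisely the loss in the best available effective-equidistribution input, balanced across the intermediate scales, that degrades the heuristic exponent $1$ to the exponent $11/16$ of \eqref{R4bd}; converting the resulting powers of $N$ to powers of $m$ through \eqref{totnumlp3} then completes the argument.
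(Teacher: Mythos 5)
Your reduction is exactly the paper's: expand $r^4$, use the stationary-phase decay $|\widehat{d\sigma}(\nu)|\ll(1+|\nu|)^{-1}$ (Proposition \ref{stein}, which is the only place the curvature hypothesis enters), handle the diagonal $\nu=0$ via the planar-intersection bound \eqref{kappa3bound} to get a contribution $\ll N^{2+\epsilon}/N^4$, and reduce the rest to the weighted correlation sum $\sum_{\nu\neq 0}r_4(\nu)|\nu|^{-2}$. Up to that point everything matches the paper (the off-diagonal sum is the content of Lemma \ref{4mom4corr}), and you correctly diagnose that the crude inputs only give exponent $2/3$ and that the gain must come from counting $4$-tuples with $|\sum_i\mu_i|$ in an intermediate range.

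The gap is that you never execute the step that actually produces $11/16$, and the specific plan you sketch for it is not the workable one. You propose to control how many \emph{pair-sums} $\mu_1+\mu_2$ fall in a ball of radius $T$ about $-(\mu_3+\mu_4)$; but the set of pair-sums does not lie on a sphere, so the cap-counting estimate of Lemma \ref{lemmachi} does not apply to it directly, and no estimate of that kind for pair-sums is available in section \ref{seclp}. The paper's move is different and cleaner: fix $\mu_1,\mu_2,\mu_3$ (a factor $N^3$) and observe that the condition $A\le|\sum_i\mu_i|\le B$ forces the single lattice point $\mu_4$ into a spherical cap of radius $O(B)$ on $\sqrt m\,\mathcal{S}^2$, to which Lemma \ref{lemmachi} gives the bound $m^{\epsilon}(1+B^2/m^{1/4})$. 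Splitting into the three ranges $1\le|v|\le A$, $A\le|v|\le B$, $|v|\ge B$ then yields
\begin{equation*}
N^{2+\epsilon}A+\frac{N^3}{A^2}\,m^{\epsilon}\Bigl(1+\frac{B^2}{m^{1/4}}\Bigr)+\frac{N^4}{B^2},
\end{equation*}
and only after optimising $(A,B)=(N^{5/8},N^{11/16})$ (using \eqref{totnumlp3} to compare $N$ and $\sqrt m$) does one obtain $N^{2+5/8+\epsilon}$, hence $\mathcal{R}_4\ll N^{-11/8+\epsilon}\ll m^{-11/16+\epsilon}$. Asserting that ``balancing across the intermediate scales degrades the exponent to $11/16$'' without this computation leaves the decisive quantitative content of the lemma unproved; you should replace the pair-sum heuristic with the fix-three-points-and-cap argument and carry out the optimisation.
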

\noindent
The proof of Lemma \ref{R4} will be given in section \ref{sec4mom}.

\begin{lemma}
\label{X2Y4}
We have the following upper bounds:
\begin{align*}
&\iint_{\Sigma^2}tr(X^2)d\sigma d\sigma',\ \iint_{\Sigma^2}tr(X'^2)d\sigma d\sigma',\ \iint_{\Sigma^2}tr(Y^4)d\sigma d\sigma',\\ &\iint_{\Sigma^2}tr(Y'^4)d\sigma d\sigma',\
\iint_{\Sigma^2}r^2tr(X)d\sigma d\sigma',\
\iint_{\Sigma^2}r^2tr(X')d\sigma d\sigma',\\
&\iint_{\Sigma^2}r^2tr(Y'Y)d\sigma d\sigma'\ll_\Sigma\frac{1}{m^{11/16-\epsilon}}.
\end{align*}
\end{lemma}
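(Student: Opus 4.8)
The plan is to bound each of the seven integrands pointwise by $C_\Sigma$ times a sum of homogeneous derivative monomials in $r,D,H$, and then to reduce the $\Sigma^2$-integral of each monomial to a moment $\mathcal{R}_{2k}$ with $2k\geq 4$, which Lemma \ref{R4} controls. First I would record the elementary matrix facts. The block $X$ is rank one (being $-\frac{1}{(1-r^2)M}\,ww^T$ with $w=QL^T\Omega D^T$), so $tr(X^2)=(tr\,X)^2$; while for the symmetric $2\times 2$ blocks one has $tr(Y^4)\leq (tr(Y^2))^2$ and $|tr(Y'Y)|\leq\frac12(tr(Y^2)+tr(Y'^2))$. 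Bounding $\Omega,Q,L$ by their $O_\Sigma(1)$ operator norms and, on the region where $1-r^2$ is bounded away from $0$, the factors $(1-r^2)^{-1}$ and $\frac{r}{1-r^2}$ by constants, and using $H(\sigma',\sigma)=H(\sigma,\sigma')$ and $|D(\sigma',\sigma)|=|D(\sigma,\sigma')|$, these reduce every integrand to $C_\Sigma$ times one of $\frac{|D|^4}{M^2}$, $\frac{(|H|^2+|D|^4)^2}{M^4}$, $\frac{r^2|D|^2}{M}$, $\frac{r^2(|H|^2+|D|^4)}{M^2}$. In each of these the total degree of the $\mu$-polynomial in the numerator equals twice the power of $M\sim m$ in the denominator; this homogeneity is precisely what underlies the uniform boundedness of Lemma \ref{XY}.

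Next I would carry out the lattice-point reduction. Expanding any such monomial via Definition \ref{defdh} produces a sum over $2k$ points $\mu_1,\dots,\mu_{2k}\in\mathcal{E}$ with oscillatory factor $e^{2\pi i\langle\mu_1+\dots+\mu_{2k},\sigma-\sigma'\rangle}$ and a weight built from inner products $\langle\mu_i,\mu_j\rangle$ and components $\mu_i^{(l)}$. Integrating over $\Sigma^2$ and using $\iint_{\Sigma^2}e^{2\pi i\langle v,\sigma-\sigma'\rangle}d\sigma d\sigma'=|\widehat{d\sigma}(v)|^2$, where $\widehat{d\sigma}(v):=\int_\Sigma e^{2\pi i\langle v,\sigma\rangle}d\sigma$, then bounding the weight by the matching power of $m$ (via $|\langle\mu_i,\mu_j\rangle|\leq m$ and $|\mu_i^{(l)}|\leq\sqrt m$) and applying the triangle inequality, the powers of $m$ and $M$ cancel and leave $\ll_\Sigma\frac{1}{N^{2k}}\sum_{\mu_1,\dots,\mu_{2k}}|\widehat{d\sigma}(\mu_1+\dots+\mu_{2k})|^2=\mathcal{R}_{2k}$. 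Since $2k\geq 4$ in every case and $|r|\leq 1$ gives $\mathcal{R}_{2k}\leq\mathcal{R}_4$, Lemma \ref{R4} yields $\ll_\Sigma m^{-11/16+\epsilon}$.

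Finally I would dispose of the singular set $\mathcal{S}_\delta:=\{(\sigma,\sigma'):|r(\sigma,\sigma')|>1-\delta\}$, on which $(1-r^2)^{-1}$ and $\frac{r}{1-r^2}$ are no longer $O(1)$. There I would not expand, but simply invoke Lemma \ref{XY}: each integrand is $O_\Sigma(1)$ on all of $\Sigma^2$, so the singular contribution is $\ll_\Sigma\mathrm{meas}(\mathcal{S}_\delta)$; and Markov's inequality applied to $r^4$ gives $\mathrm{meas}(\mathcal{S}_\delta)\leq(1-\delta)^{-4}\iint_{\Sigma^2}r^4=(1-\delta)^{-4}\mathcal{R}_4\ll_\Sigma m^{-11/16+\epsilon}$. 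Adding the good and singular regions finishes the bound for all seven integrals.

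The genuinely delicate arithmetic-geometric input — the decay of $\mathcal{R}_4$, which rests on the nonvanishing curvature of $\Sigma$ and the equidistribution of lattice points on the sphere — is entirely packaged in Lemma \ref{R4} and used here as a black box, so the remaining work is bookkeeping of two kinds: the homogeneity check ensuring each monomial closes as $\mathcal{R}_{2k}$ with $2k\geq 4$, and the good/singular decomposition forced by the non-polynomial factor $(1-r^2)^{-1}$. The main pitfall is to bound $tr(Y^4)$ by the second-order quantity $tr(Y^2)$, which is valid by Lemma \ref{XY} but only reduces to $\mathcal{R}_2\sim m^{-1/2}$ and is far too weak; one must instead retain the fourth-order estimate $tr(Y^4)\leq(tr(Y^2))^2$, which is what reduces to $\mathcal{R}_4$.
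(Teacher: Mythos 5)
Your proof is correct, and it rests on the same arithmetic inputs as the paper's (the fourth-moment bound of Lemma \ref{R4}, the uniform boundedness of Lemma \ref{XY}, and a Chebychev--Markov bound on the measure of the singular set), but it is organized genuinely differently. The paper expands each of the seven integrands into a lattice-point sum whose weights still depend on $\sigma$ through $\Omega(\sigma)$, and then for each integrand re-runs the machinery behind Lemma \ref{R4}: a split of the sum over $\mathcal{E}^4$ into the correlation set $\mathcal{C}(4)$ (handled by the cardinality bound \eqref{C4bd}) and its complement (handled by the amplitude version of Proposition \ref{stein} together with Lemma \ref{4mom4corr}), as in \eqref{cont5}--\eqref{cont6}. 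You instead strip off the geometric matrices pointwise, so that the surviving monomials in $|D|^2$, $\|H\|_F^2$ and $r$ have $\sigma$-independent weights; after integrating over $\Sigma^2$ each term carries the non-negative factor $\bigl|\int_\Sigma e^{2\pi i\langle\sum\mu_j,\sigma\rangle}d\sigma\bigr|^2$, so the triangle inequality plus the homogeneity check (weight bounded by the matching power of $m$) reduces everything to $\mathcal{R}_{2k}\le\mathcal{R}_4$, and Lemma \ref{R4} is cited once as a black box. This is a legitimate and arguably cleaner repackaging: it needs no amplitude-weighted stationary phase and no repeated correlation-set splitting, and for pure upper bounds the discarded $\Omega$-weights carry no information (they matter only where main terms are extracted, as in Lemma \ref{prelim}). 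Your singular set (a superlevel set of $|r|$, with Markov applied to $r^4$) is likewise a simplification of the paper's square-partition construction and suffices here, since only a pointwise dichotomy is required. One caveat to fix in the write-up: $Q$ is \emph{not} $O_\Sigma(1)$ globally --- its explicit form \eqref{Q} degenerates as $n_3\to 0$ --- so ``bounding $Q$ by its operator norm'' should be replaced either by working patchwise (as the paper implicitly does via its local graph parametrisations) or, better, by observing that in each of the seven traces the $Q$'s cancel exactly through cyclicity and the identity \eqref{id}; this is consistent with the fact that your list of reduced monomials contains no $Q$, so the issue is cosmetic rather than a gap.
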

\noindent
The proof of Lemma \ref{X2Y4} will be given in section \ref{sec4mom}.

\subsection{The proof of Proposition \ref{approxKR}}
We claim that the hypotheses of Kac-Rice (Proposition \ref{krvarlen}) hold. The non-degeneracy condition is met since
\begin{equation*}
r(\sigma,\sigma')\neq\pm 1
\end{equation*}
for almost all $\sigma,\sigma'\in\Sigma$. Moreover, by Lemma \ref{unifbdd}, one has the uniform bound
\begin{equation*}
\mathcal{L}_{\epsilon_1}\mathcal{L}_{\epsilon_2}\leq 18^2m.
\end{equation*}
Thanks to \cite[Lemma 5.3]{rudwi2} and the fact that $|\nabla_{\Sigma}F(\sigma)|\leq|\nabla F(\sigma)|$, one has for almost all $\sigma,\sigma'\in\Sigma$,
\begin{multline*}
\mathcal{L}_{\epsilon_1,\epsilon_2}(\sigma,\sigma')=\frac{1}{4\epsilon_1\epsilon_2}
\mathbb{E}\left[
\chi\left(\frac{F(\sigma)}{\epsilon_1}\right)
\chi\left(\frac{F(\sigma')}{\epsilon_2}\right)
|(\nabla_\Sigma F)(\sigma)||(\nabla_\Sigma F)(\sigma')|
\right]
\\\ll\frac{m}{\sqrt{1-r^2(\sigma,\sigma')}},
\end{multline*}
where the implied constant is independent of $\epsilon_1,\epsilon_2$. Therefore, one may exchange the order of taking the limit and the integration over $\Sigma^2$ in Proposition \ref{krvarlen}. The hypotheses of Kac-Rice are thus all verified, hence
\begin{equation}
\label{snsprepre}
\mathbb{E}[\mathcal{L}^2]
=\iint_{\Sigma^2}\tilde{K}_{2;\Sigma}(\sigma,\sigma')d\sigma d\sigma'.
\end{equation}

We will show that Proposition \ref{prop4.5} applies `almost everywhere' in the sense that $r$ is small outside of a small set. Since the surface $\Sigma$ is compact and regular, one may write
\begin{equation*}
\Sigma=\bigcup_{1\leq q\leq Q}\Sigma_q=\bigcup_{1\leq q\leq Q}\gamma_q(U_q),
\end{equation*}
where $\gamma_q:U_q\subset\mathbb{R}^2\to\Sigma$, $1\leq q\leq Q$, are finitely many parametrisations as in \eqref{monge}, and the union is disjoint save for boundary overlaps. These overlaps are a finite union of smooth curves possibly together with a set of points, therefore the intersection with
\begin{equation*}
\mathcal{A}_F:=\{x\in\mathbb{T}^3 : F(x)=0\}
\end{equation*}
is a.s. a finite set of points \cite[Theorem 1.4]{ruwiye}.

For each $q$, consider the smallest rectangle $\overline{U}_q\supseteq U_q$ with sides parallel to the coordinate axes of $\mathbb{R}^2$. Partition (with boundary overlaps) $\overline{U}_q$ into small squares $\overline{U}_{q,p}$ of side length $\delta=c_0/\sqrt{m}$ for some small $c_0>0$. This means $U_q$ is the disjoint union (with boundary overlaps) of the $U_q\cap\overline{U}_{q,p}=:U_{q,p}$. Each $\gamma_q$ is bijective, thus each $\Sigma_q$ is the disjoint union of the $\Sigma_{q,p}:=\gamma_q(U_{q,p})$.

To simplify the notation, we re-label the indices $q,p$ to a single index $i$ and write $\Sigma=\cup_i\Sigma_i$. The set $\Sigma\times\Sigma$ is thus partitioned (with boundary overlaps) into regions $\Sigma_i\times\Sigma_j=:V_{i,j}$.
\begin{defin}
\label{singset}
We say the region $V_{i,j}$ is {\em singular} if there are points $\sigma\in\Sigma_i$ and $\sigma'\in\Sigma_j$ s.t. $|r(\sigma,\sigma')|>1/2$. The union of all singular regions is the {\em singular set} $S$.
\end{defin}
\begin{lemma}
\label{Sbound}
The measure of the singular set satisfies for every $k\geq 0$ the following upper bound:
\begin{equation*}
\text{meas}(S)\ll\mathcal{R}_k(m),
\end{equation*}
where $\mathcal{R}_k(m)$ is the $k$-th moment \eqref{Rkeq} of the covariance function $r$ on the surface $\Sigma$.
\end{lemma}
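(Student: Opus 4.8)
The plan is to exploit the fact that the covariance function $r$ varies slowly on the scale $1/\sqrt{m}$: a single point of a region $V_{i,j}$ at which $|r|>1/2$ will force $|r|$ to stay bounded away from $0$ over the \emph{entire} region, simply because each region has diameter of order $1/\sqrt{m}$ by construction ($\delta=c_0/\sqrt{m}$). This rigidity converts a pointwise condition (``$V_{i,j}$ is singular'') into a lower bound on $r$ valid throughout $V_{i,j}$, which in turn lets me compare $\text{meas}(S)$ to an integral of a power of $r$.

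First I would record a gradient bound for $r$. Differentiating the expression \eqref{rF} gives $\nabla_\sigma r=\frac{2\pi i}{N}\sum_{\mu\in\mathcal{E}}e^{2\pi i\langle\mu,\sigma-\sigma'\rangle}\mu$, and since $|\mu|=\sqrt{m}$ for every $\mu\in\mathcal{E}$, each of the $N$ summands has modulus $\leq 2\pi\sqrt{m}/N$; hence $|\nabla_\sigma r|\leq 2\pi\sqrt{m}$, and by symmetry $|\nabla_{\sigma'}r|\leq 2\pi\sqrt{m}$ as well. Thus $r$, viewed as a function on $\Sigma\times\Sigma\subset\mathbb{R}^3\times\mathbb{R}^3$, is Lipschitz with constant $\ll\sqrt{m}$. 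Next I would convert this into control over the oscillation of $r$ on one region. Since $\Sigma$ is compact and regular it is covered by the finitely many charts $\gamma_q$, whose derivatives are bounded uniformly by a constant depending only on $\Sigma$; consequently each piece $\Sigma_i=\gamma_q(U_{q,p})$, being the image of a square of side $\delta=c_0/\sqrt{m}$, has diameter $\ll_\Sigma\delta$. For any two points $(\sigma,\sigma'),(\sigma_0,\sigma_0')\in V_{i,j}$ this yields $|\sigma-\sigma_0|+|\sigma'-\sigma_0'|\ll_\Sigma\delta$, so combining with the gradient bound,
\[
|r(\sigma,\sigma')-r(\sigma_0,\sigma_0')|\ll_\Sigma \sqrt{m}\,\delta = C_\Sigma\, c_0 .
\]
Choosing the constant $c_0$ in the partition small enough (depending only on $\Sigma$) that $C_\Sigma c_0<1/4$, the oscillation of $r$ over any region $V_{i,j}$ is $<1/4$. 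If $V_{i,j}$ is singular there is a point $(\sigma_0,\sigma_0')$ with $|r(\sigma_0,\sigma_0')|>1/2$, whence $|r(\sigma,\sigma')|>1/4$ for every $(\sigma,\sigma')\in V_{i,j}$, and therefore $|r|>1/4$ everywhere on $S$.

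Finally I would integrate. As $r$ is real-valued, for even $k$ we have $r^k\geq 0$ throughout $\Sigma\times\Sigma$ and $r^k>(1/4)^k$ on $S$, so
\[
\text{meas}(S)\leq 4^k\iint_{S}r^k\,d\sigma\,d\sigma'\leq 4^k\iint_{\Sigma^2}r^k\,d\sigma\,d\sigma'=4^k\,\mathcal{R}_k(m),
\]
the last inequality using $r^k\geq 0$ on all of $\Sigma^2$; the boundary overlaps between regions form a set of measure zero and do not affect $\text{meas}(S)$. This establishes $\text{meas}(S)\ll\mathcal{R}_k(m)$ for every even $k\geq 0$, which includes the case $k=4$ relevant to the application, where the right-hand side is controlled by Lemma \ref{R4}. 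The only genuinely delicate point is that the Lipschitz constant $C_\Sigma$, and hence the admissible threshold $c_0$, must be chosen uniformly over the whole surface; this is exactly where the compactness of $\Sigma$ and the finiteness of the atlas of charts \eqref{monge} are used.
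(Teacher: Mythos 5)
Your argument is correct and is essentially the paper's own proof: the paper likewise observes that $r/\sqrt{m}$ is Lipschitz with constant independent of $m$, so that $|r|>1/4$ throughout every singular region once $c_0$ is chosen small, and then applies the Chebychev--Markov inequality to get $\text{meas}(S)\le 4^k\mathcal{R}_k(m)$. Your explicit restriction to even $k$ is in fact the honest form of the statement (for odd $k$ one would need $|r|^k$ in place of $r^k$), and it covers the only case used in the paper, namely $k=4$.
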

\begin{proof}
As $r/\sqrt{m}$ is a Lipschitz function, with constant independent of $m$, then for each singular region $V_{i,j}=\Sigma_i\times \Sigma_j$
\begin{equation*}
|r(\sigma,\sigma')|>1/4
\end{equation*}
everywhere on $V_{i,j}$, provided $c_0$ is chosen sufficiently small. An application of the Chebychev-Markov inequality now yields the statement of the present lemma.
\end{proof}

Outside of $S$ Proposition \ref{prop4.5} applies so that we may rewrite \eqref{snsprepre} as
\begin{multline}
\label{snspre}
\mathbb{E}[\mathcal{L}^2]
=M\iint_{\Sigma^2\setminus S}
\left(
\frac{1}{4}
+\frac{1}{8}r^2
+\frac{tr(X)}{16}
+\frac{tr(X')}{16}
+\frac{tr(Y'Y)}{32}
\right)d\sigma d\sigma'
\\
+m\cdot O\iint_{\Sigma^2}(r^4+X^2+X'^2+Y^4+Y'^4+r^2(tr(X)+tr(X')+tr(Y'Y)))d\sigma d\sigma'
\\+\iint_{S}\tilde{K}_{2;\Sigma}(\sigma,\sigma')d\sigma d\sigma'.
\end{multline}
To control the third summand in \eqref{snspre} we state the following auxiliary result.
\begin{lemma}
\label{bdsingl}
We have the bound
\begin{equation*}
\iint_{S}\tilde{K}_{2;\Sigma}(\sigma,\sigma')d\sigma d\sigma'
\ll
m\cdot\mathcal{R}_4(m).
\end{equation*}
\end{lemma}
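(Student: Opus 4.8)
The plan is to bound $\tilde K_{2;\Sigma}$ pointwise and then integrate it against the smallness of the singular set $S$. First I would show that, wherever $r\neq\pm1$,
\begin{equation*}
\tilde K_{2;\Sigma}(\sigma,\sigma')\ll\frac{m}{\sqrt{1-r^2(\sigma,\sigma')}}.
\end{equation*}
In \eqref{K2tS} the density factor equals $\phi_{F(\sigma),F(\sigma')}(0,0)=(2\pi\sqrt{1-r^2})^{-1}$, so it remains to bound the conditional expectation by $O(m)$. By the Cauchy--Schwarz inequality,
\begin{equation*}
\mathbb{E}\left[|\nabla_\Sigma F(\sigma)|\,|\nabla_\Sigma F(\sigma')|\ \big|\ F(\sigma)=F(\sigma')=0\right]\leq\prod_{\tau\in\{\sigma,\sigma'\}}\mathbb{E}\left[|\nabla_\Sigma F(\tau)|^2\ \big|\ F(\sigma)=F(\sigma')=0\right]^{1/2},
\end{equation*}
and since conditioning a Gaussian vector only decreases its covariance (the conditional covariance is a Schur complement of the matrix $\Phi$ of Lemma \ref{lecovmat}), each factor is at most its unconditional value $M\,tr(L^T\Omega L)=M(1+n_3^2)\leq2M\ll m$, uniformly. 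This yields the pointwise bound and reduces the lemma to proving
\begin{equation*}
\iint_S\frac{d\sigma\, d\sigma'}{\sqrt{1-r^2(\sigma,\sigma')}}\ll\mathcal{R}_4(m).
\end{equation*}

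I would then split $S$ by the size of $1-r^2$. Fixing a small $\eta>0$, on the mild part $S_{\mathrm{mild}}:=\{(\sigma,\sigma')\in S:|r|\leq1-\eta\}$ the integrand is bounded, $\frac{1}{\sqrt{1-r^2}}\leq(2\eta-\eta^2)^{-1/2}\ll_\eta1$, so by Lemma \ref{Sbound} with $k=4$,
\begin{equation*}
\iint_{S_{\mathrm{mild}}}\frac{d\sigma\, d\sigma'}{\sqrt{1-r^2}}\ll_\eta\text{meas}(S)\ll\mathcal{R}_4(m).
\end{equation*}
This disposes of the region on which $r$ stays away from $\pm1$.

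The main difficulty is the strongly singular part $S_{\mathrm{sing}}:=\{(\sigma,\sigma')\in S:|r|>1-\eta\}$, where the integrand blows up. Here I would use stationarity, $r(\sigma,\sigma')=R(\sigma-\sigma')$ with $R(z)=\frac1N\sum_{\mu\in\mathcal{E}}e^{2\pi i\langle\mu,z\rangle}$, together with the fact that $|R|=1$ occurs only at the finitely many $z=v$ at which every $\langle\mu,v\rangle\in\frac12\mathbb{Z}$ (the diagonal point $v=0$ among them). At each such $v$ the Taylor expansion of $R$ and equidistribution of $\mathcal{E}$ give a nondegenerate extremum,
\begin{equation*}
1-R(z)^2\asymp M\,|z-v|^2\asymp m\,|z-v|^2,\qquad |z-v|\lesssim1/\sqrt m.
\end{equation*}
Writing $w=(\sigma-\sigma')-v$ and changing variables so that $w$ ranges over the directions transverse to the degeneracy locus $\{\sigma-\sigma'=v\}\cap\Sigma^2$ (of dimension $k\in\{2,3\}$ according to whether the tangent planes $T_\sigma\Sigma,T_{\sigma'}\Sigma$ there coincide or not), one finds $\frac{1}{\sqrt{1-r^2}}\asymp\frac{1}{\sqrt m\,|w|}$ and hence
\begin{equation*}
\frac{1}{\sqrt m}\int_{|w|\lesssim1/\sqrt m}\frac{d^k w}{|w|}\asymp\left(\frac{1}{\sqrt m}\right)^{k}\asymp\int_{|w|\lesssim1/\sqrt m}d^k w.
\end{equation*}
The crucial cancellation is that the factor $1/|w|\sim\sqrt m$ from the singularity is exactly balanced by the prefactor $1/\sqrt M\asymp1/\sqrt m$, so that integrating the singular density over a transverse ball of scale $1/\sqrt m$ has the same order as integrating the constant $1$. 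Since $r^4\asymp1$ on this region, the local contribution of $\frac{1}{\sqrt{1-r^2}}$ matches that of $r^4$, and summing over the (finitely many) degeneracy loci gives
\begin{equation*}
\iint_{S_{\mathrm{sing}}}\frac{d\sigma\, d\sigma'}{\sqrt{1-r^2}}\ll\iint_{S_{\mathrm{sing}}}r^4\,d\sigma\, d\sigma'\leq\mathcal{R}_4(m),
\end{equation*}
which together with the mild estimate completes the reduction.

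The hardest step is making this local analysis rigorous with uniform implied constants: one must control the Jacobian of $(\sigma,\sigma')\mapsto\sigma-\sigma'$ near each degeneracy locus — which degenerates precisely where $T_\sigma\Sigma$ and $T_{\sigma'}\Sigma$ align — and quantify how $\Sigma$ meets its translates $\Sigma+v$. This is exactly where the nowhere-vanishing Gauss--Kronecker curvature enters, and it parallels the geometric and lattice-point input used to bound $\mathcal{R}_4$ in section \ref{sec4mom}. Once these are in place, the pointwise bound on $\tilde K_{2;\Sigma}$ and the two regimes above yield $\iint_S\tilde K_{2;\Sigma}\,d\sigma\, d\sigma'\ll m\,\mathcal{R}_4(m)$, as claimed.
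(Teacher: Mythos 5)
Your pointwise bound $\tilde K_{2;\Sigma}\ll m/\sqrt{1-r^2}$ and the treatment of the mild part $\{|r|\le 1-\eta\}$ are fine (the paper uses essentially this bound, via \cite[Lemma 5.3]{rudwi2}, when verifying the Kac--Rice hypotheses). The gap is in the strongly singular part. You assert that $\{z:|R(z)|>1-\eta\}$ is covered by $O(1/\sqrt m)$-neighbourhoods of the finitely many points $v$ where $|R(v)|=1$, and then integrate the Taylor-expanded singularity over those tubes. But nothing in your argument (or in the available lattice-point input) localises the set $\{|r|>1-\eta\}$ near those degeneracy loci: the Taylor expansion $1-R^2\asymp m|z-v|^2$ is only valid in a ball of radius $c_0/\sqrt m$ about each $v$, and the only control on $\{|r|>1-\eta\}$ outside these balls is the Chebychev--Markov bound on its \emph{measure} ($\ll\mathcal{R}_4$), which says nothing about where it sits or how fast $1-r^2$ vanishes there. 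Indeed the measure of $\{|R|>1-\eta\}$ in $\mathbb{T}^3$ is only known to be $\ll N^{-2+\epsilon}\approx m^{-1+\epsilon}$, which is much larger than the total volume $\approx m^{-3/2}$ of your tubes, so the set genuinely need not be confined to them; on the uncovered portion your integrand $m/\sqrt{1-r^2}$ is unbounded and you have no estimate.

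The paper sidesteps this entirely with a probabilistic Cauchy--Schwarz step that you are missing. Writing $\mathcal{L}_i:=h_1(\mathcal{A}_F\cap\Sigma_i)$ for the cells $\Sigma_i$ of diameter $\asymp c_0/\sqrt m$, Kac--Rice gives $\iint_{V_{i,j}}\tilde K_{2;\Sigma}=\mathbb{E}[\mathcal{L}_i\mathcal{L}_j]$, and
\begin{equation*}
\left|\iint_{V_{i,j}}\tilde K_{2;\Sigma}\,d\sigma\,d\sigma'\right|\le\left(\iint_{V_{i,i}}\tilde K_{2;\Sigma}\,d\sigma\,d\sigma'\iint_{V_{j,j}}\tilde K_{2;\Sigma}\,d\sigma\,d\sigma'\right)^{1/2},
\end{equation*}
so every singular (off-diagonal) cell is controlled by \emph{diagonal} cells, where the only degeneracy is at $\sigma=\sigma'$ and the expansion $1-r^2\asymp M|\sigma-\sigma'|^2$ is legitimately available; each diagonal cell contributes $\asymp\sqrt m\iint_{V_{i,i}}|\sigma-\sigma'|^{-1}\ll\sqrt m\,\delta^3\asymp 1/m$, and the number of singular cells is $\ll\mathrm{meas}(S)/\delta^4\ll m^2\mathcal{R}_4(m)$ by Lemma \ref{Sbound}, giving $m\cdot\mathcal{R}_4(m)$. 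You should replace your local analysis of the degeneracy loci by this Cauchy--Schwarz reduction; as written, the strongly singular regime is not handled.
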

The proof of Lemma \ref{bdsingl} will follow in appendix \ref{appa}. By Lemmas \ref{R4}, \ref{X2Y4}, and \ref{bdsingl} we may rewrite \eqref{snspre} as
\begin{multline*}
\mathbb{E}[\mathcal{L}^2]
=M\left[\iint_{\Sigma^2\setminus S}
\left(
\frac{1}{4}
+\frac{1}{8}r^2
+\frac{tr(X)}{16}
+\frac{tr(X')}{16}
+\frac{tr(Y'Y)}{32}
\right)d\sigma d\sigma'
\right.
\\
\left.
+O_\Sigma\left(\frac{1}{m^{11/16-\epsilon}}\right)\right].
\end{multline*}
Changing the domain of integration to $\Sigma^2$ carries an error of $m\cdot\mathcal{R}_4(m)$ thanks to Lemmas \ref{XY} and \ref{Sbound}. Subtracting the expectation squared (Proposition \ref{explen}) completes the proof of the approximate Kac-Rice formula Proposition \ref{approxKR}.

\section{Lattice points on spheres}
\label{seclp}
\subsection{Background}
To estimate the second and fourth moments of the covariance function $r$ and of its derivatives (in sections \ref{sec2mom} and \ref{sec4mom} respectively), we will need several considerations on lattice points on spheres $\sqrt{m}\mathcal{S}^2$. An integer $m$ is representable as a sum of three squares if and only if it is not of the form $4^l(8k+7)$, for $k,l$ non-negative integers \cite{harwri, daven1}. The total number of lattice points $N_m=r_3(m)$ oscillates: it is unbounded but vanishes for arbitrarily large $m$. We have the upper bound \cite[section 1]{bosaru}
\begin{equation*}
N
\ll
(\sqrt{m})^{1+\epsilon} \quad\text{for all} \  \epsilon>0.
\end{equation*}
The condition $m\not\equiv 0,4,7 \pmod 8$ is equivalent to the existence of \textit{primitive} lattice points $(\mu^{(1)},\mu^{(2)},\mu^{(3)})$, meaning $\mu^{(1)},\mu^{(2)},\mu^{(3)}$ are coprime (see e.g. \cite[section 1]{bosaru} and \cite[section 4]{ruwiye}). In this case, we have both lower and upper bounds \eqref{totnumlp3}
\begin{equation*}
(\sqrt{m})^{1-\epsilon}
\ll
N
\ll
(\sqrt{m})^{1+\epsilon}.
\end{equation*}
This lower bound is ineffective: the behaviour of $r_3(m)$ is not completely understood \cite[section 1]{bosaru}.

Given a sphere $\mathfrak{C}\subset\mathbb{R}^3$ and a point $P\in\mathfrak{C}$, we define the {\em spherical cap} $\mathcal{T}$ centred at $P$ to be the intersection of $\mathfrak{C}$ with the ball $\mathcal{B}_s(P)$ of radius $s$ centred at $P$. We will call $s$ the {\em radius of the cap}. We shall denote
\begin{equation}
\label{chi(R,s)}
\chi(\sqrt{m},s)
=\max_\mathcal{T}\#\{\mu\in\mathbb{Z}^3\cap \mathcal{T}\}
\end{equation}
the maximal number of lattice points contained in a spherical cap $\mathcal{T}\subset \sqrt{m}\mathcal{S}^2$ of radius $s$.
\begin{lemma}[Bourgain and Rudnick {\cite[Lemma 2.1]{brgafa}}]
	\label{lemmachi}
	We have for all $\epsilon>0$,
	\begin{equation*}
	\chi(\sqrt{m},s)
	\ll
	m^\epsilon\left(1+\frac{s^2}{\sqrt{m}^{1/2}}\right)
	\end{equation*}
	as $m\to\infty$.
\end{lemma}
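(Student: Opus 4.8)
The plan is to establish the bound through a covering argument that reduces everything to caps small enough to force the lattice points they contain to be coplanar. Write $R=\sqrt{m}$. First I would record the elementary covering estimate: for $t\le s$, a spherical cap on $R\mathcal{S}^2$ of radius $s$ can be covered by $\ll 1+(s/t)^2$ caps of radius $t$, since the larger cap has area $\asymp s^2$ and each small cap has area $\asymp t^2$ (in the only relevant range $t\le R$). Taking the maximum over radius-$s$ caps then gives
\[
\chi(\sqrt{m},s)\ll\left(1+\frac{s^2}{t^2}\right)\chi(\sqrt{m},t),
\]
so the whole problem is reduced to bounding $\chi(\sqrt{m},t)$ for a single well-chosen small radius $t$.

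The heart of the argument is the claim that if $t\le c\,m^{1/8}$ for a suitable small constant $c>0$, then \emph{every} cap $\mathcal{T}$ of radius $t$ contains at most $\ll m^\epsilon$ lattice points. Fix such a $\mathcal{T}$ and suppose $\mu_0,\mu_1,\mu_2,\mu_3\in\mathbb{Z}^3\cap\mathcal{T}$ lie on the sphere. Setting $v_i=\mu_i-\mu_0$, each $v_i$ is an integer vector of length $\le 2t$, and $\det(v_1,v_2,v_3)\in\mathbb{Z}$ equals $6$ times the volume of the tetrahedron $\mu_0\mu_1\mu_2\mu_3$. Because all four points lie on $R\mathcal{S}^2$ within Euclidean distance $t$ of the cap centre, they are confined to a slab of thickness $\asymp t^2/R$ about the tangent plane at that centre while spreading at most $t$ in the tangential directions; hence the tetrahedron fits inside a box of dimensions $t\times t\times(t^2/R)$ and
\[
|\det(v_1,v_2,v_3)|\le 6\,\mathrm{Vol}\ll\frac{t^4}{R}=\frac{t^4}{\sqrt{m}}.
\]
Choosing $c$ small enough that $t\le c\,m^{1/8}$ forces the right-hand side below $1$; since the determinant is an integer it must vanish, so $v_1,v_2,v_3$ are linearly dependent. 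As $\mu_1,\mu_2,\mu_3$ were arbitrary, all lattice points of $\mathcal{T}$ lie in a single plane $\Pi$.

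It then remains to bound the number of lattice points on the planar section $\Pi\cap R\mathcal{S}^2$, which (once $\mathcal{T}$ contains three non-collinear points, otherwise we are trivially done) is a circle of squared radius $\ll m$ lying in a \emph{rational} plane $\Pi$. I would reduce this to the two-dimensional situation: the integer points of $\Pi$ form a rank-$2$ affine lattice, and an affine map carrying $\Pi$ to a coordinate plane sends this lattice into a fixed one and the circle to a conic, so the count is controlled by the number of lattice points on a circle of squared radius $\ll m$ in $\mathbb{R}^2$, which is $\ll m^\epsilon$ by the classical divisor bound on $r_2$. This gives $\chi(\sqrt{m},t)\ll m^\epsilon$ for $t\asymp m^{1/8}$; substituting $t\asymp m^{1/8}$ into the covering estimate yields $\chi(\sqrt{m},s)\ll m^\epsilon(1+s^2/m^{1/4})$, as claimed (note $\sqrt{m}^{1/2}=m^{1/4}$).

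The main obstacle is the coplanarity step: one must extract a genuine gain from the curvature of the sphere (the $t^2/R$ slab thickness) and marry it to the integrality of the determinant, taking care that the volume bound is uniform over the position and orientation of the cap. The covering reduction and the circle count are comparatively routine, the latter being a standard consequence of bounds on the number of representations as a sum of two squares.
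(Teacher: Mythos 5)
Your argument is correct and is essentially the original Bourgain--Rudnick proof of \cite[Lemma 2.1]{brgafa}, which the paper imports without reproving: the coplanarity of lattice points in caps of radius $\ll m^{1/8}$ via the integrality of the determinant against the $t^4/R$ volume bound, the Jarnik-type $m^\epsilon$ count on the resulting planar circle, and the covering step are exactly the ingredients of the cited proof. No gaps; note only that the planar-circle count you sketch is precisely the bound $\kappa(R)\ll R^\epsilon$ that the paper already quotes at \eqref{kappa3bound}, so you could also just invoke that.
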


\begin{defin}
\label{projlpdef}
Given an integer $m$ expressible as a sum of three squares, define
\begin{equation}
\label{projlp}
\widehat{\mathcal{E}}_m:=\mathcal{E}_m/\sqrt{m}\subset\mathcal{S}^{2}
\end{equation}
to be the projection of the set of lattice points $\mathcal{E}_m$ \eqref{E} on the unit sphere (cf. \cite[(1.5)]{bosaru} and \cite[(4.3)]{ruwiye}).
\end{defin}

Linnik conjectured (and proved under GRH) that the projected lattice points $\widehat{\mathcal{E}}_m$ become equidistributed as $m\to\infty$, $m\not\equiv 0,4,7 \pmod 8$. This result was proven unconditionally by Duke \cite{duke88,dukesp} and by Golubeva-Fomenko \cite{golfom} following a breakthrough by Iwaniec \cite{iwniec}. As a consequence, one may approximate a summation over the lattice point set by an integral over the unit sphere.
\begin{lemma}[cf. {\cite[Lemma 8]{pascbo}}]
\label{equidlemma}
Let $g(z)$ be a $C^2$-smooth function on $\mathcal{S}^2$. For $m\to\infty$, $m\not\equiv 0,4,7 \pmod 8$, we have
\begin{equation*}
\frac{1}{N}\sum_{\mu\in\mathcal{E}}g\left(\frac{\mu}{|\mu|}\right)
=
\int_{z\in\mathcal{S}^2}g(z)\frac{dz}{4\pi}
+
O_g\left(\frac{1}{m^{1/28-\epsilon}}\right).
\end{equation*}
\end{lemma}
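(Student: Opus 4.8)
The plan is to pass to a spherical-harmonic expansion and reduce the statement to individual Weyl-type bounds for the projected lattice points $\widehat{\mathcal{E}}_m$, which are supplied by Duke's unconditional resolution of Linnik's problem (following Iwaniec's estimates for the Fourier coefficients of half-integral weight forms). Write $g=\sum_{\ell\ge 0}g_\ell$, where $g_\ell=\sum_{|k|\le\ell}\widehat{g}(\ell,k)Y_{\ell,k}$ is the $L^2(\mathcal{S}^2)$-projection of $g$ onto the degree-$\ell$ spherical harmonics $Y_{\ell,k}$. The degree-zero component is constant and equals $\frac{1}{4\pi}\int_{\mathcal{S}^2}g\,dz$, and this reproduces the main term both for the discrete average $\frac1N\sum_{\mu\in\mathcal{E}}g(\mu/|\mu|)$ (the constant passes through the average) and for $\int_{\mathcal{S}^2}g\,\frac{dz}{4\pi}$. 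Hence it suffices to bound $\frac1N\sum_{\mu\in\mathcal{E}}\bigl(\sum_{\ell\ge 1}g_\ell(\mu/|\mu|)\bigr)$.

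First I would record the arithmetic input, which is the heart of the matter: for each degree $\ell\ge 1$ and each normalised harmonic $Y_{\ell,k}$, the normalised Weyl sum obeys $\bigl|\frac1N\sum_{\mu\in\mathcal{E}}Y_{\ell,k}(\mu/|\mu|)\bigr|\ll_\epsilon m^{-1/28+\epsilon}$, with at most mild dependence on $\ell$; the exponent $1/28$ is precisely the known equidistribution rate for lattice points on $\mathcal{S}^2$, packaged in the cited \cite{pascbo}. I would then exploit the $C^2$-regularity of $g$ analytically. Since $\Delta_{\mathcal{S}^2}g\in C^0\subset L^2$ and $\Delta_{\mathcal{S}^2}g_\ell=-\ell(\ell+1)g_\ell$, Parseval gives $\sum_{\ell\ge 1}\ell^4\|g_\ell\|_2^2\ll\|g\|_{C^2}^2$, whence $\|g_\ell\|_2\ll_g\ell^{-2}$; combined with the addition theorem $\sum_{|k|\le\ell}|Y_{\ell,k}(z)|^2=\frac{2\ell+1}{4\pi}$ this yields both $\|g_\ell\|_\infty\ll\ell^{1/2}\|g_\ell\|_2$ and $\sum_{|k|\le\ell}|\widehat{g}(\ell,k)|\ll\ell^{1/2}\|g_\ell\|_2\ll_g\ell^{-3/2}$.

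Next I would split the sum at a truncation level $L$. For the tail $\ell>L$ the trivial estimate $\bigl|\frac1N\sum_{\mu}g_\ell(\mu/|\mu|)\bigr|\le\|g_\ell\|_\infty$ together with Cauchy--Schwarz and $\sum_\ell\ell^4\|g_\ell\|_2^2<\infty$ yields $\sum_{\ell>L}\ell^{-3/2}(\ell^2\|g_\ell\|_2)\ll_g L^{-1}$. For the head $1\le\ell\le L$ I would insert the Weyl bound and sum, using $\sum_{|k|\le\ell}|\widehat{g}(\ell,k)|\ll_g\ell^{-3/2}$, to get a contribution $\ll_g m^{-1/28+\epsilon}\sum_{\ell\le L}\ell^{-3/2}\ll_g m^{-1/28+\epsilon}$. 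Choosing $L\asymp m^{1/28}$ balances the two terms and produces the claimed error $O_g(m^{-1/28+\epsilon})$.

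The main obstacle is the Weyl-sum bound itself: it is not elementary and rests on Duke's theorem and Iwaniec's estimates for coefficients of modular forms of weight $3/2$. In particular one must verify that its dependence on the harmonic degree $\ell$ is mild enough — at most comparable to $\ell^{1/2}$, so that it is dominated by the $\ell^{-3/2}$ decay of the harmonic coefficients of a $C^2$ function — to ensure that the summation over $\ell$ preserves the rate $m^{-1/28}$. Everything else (the harmonic expansion, the Parseval/Sobolev bounds from $C^2$-regularity, and the head/tail optimisation) is routine once this input is available; since \cite{pascbo} already supplies the equidistribution estimate, the work here is the reduction above and the verification of the precise $C^2$-norm dependence.
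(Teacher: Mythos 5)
The paper does not actually prove this lemma: it is imported wholesale, as a citation, from \cite{pascbo} (Lemma 8), so there is no internal argument to compare yours against. Your reconstruction is the standard proof of such effective equidistribution statements and is essentially sound. Expanding $g$ in spherical harmonics, the degree-zero projection produces the main term on both sides (for $\ell\geq 1$ one has $\int_{\mathcal{S}^2}g_\ell\,dz=0$ by orthogonality); the nonconstant Weyl sums $\frac1N\sum_{\mu}Y_{\ell,k}(\mu/|\mu|)\ll_\epsilon m^{-1/28+\epsilon}$ are supplied by Duke--Iwaniec, and this is where the hypothesis $m\not\equiv 0,4,7\pmod 8$ and the lower bound $N\gg m^{1/2-\epsilon}$ of \eqref{totnumlp3} enter. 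Your bookkeeping is correct: $\sum_{|k|\le\ell}|\widehat g(\ell,k)|\ll\ell^{1/2}\|g_\ell\|_2$ and $\|g_\ell\|_\infty\ll\ell^{1/2}\|g_\ell\|_2$ follow from the addition theorem, the tail bound $\sum_{\ell>L}\ell^{1/2}\|g_\ell\|_2\ll L^{-1}\|\Delta_{\mathcal{S}^2}g\|_{L^2}$ is a correct Cauchy--Schwarz against $\sum_\ell\ell^4\|g_\ell\|_2^2<\infty$, and $L\asymp m^{1/28}$ balances the two contributions. The one point you must genuinely nail down --- and you flag it yourself --- is the degree dependence of the Weyl-sum bound: with only $C^2$ regularity you get $\sum_{|k|\le\ell}|\widehat g(\ell,k)|\ll_g\ell^{-3/2}$ and nothing better, so the exponent $1/28$ survives only if the Duke-type bound for an $L^2$-normalised degree-$\ell$ harmonic carries at worst a factor $O(\ell^{1/2+o(1)})$; the general-purpose statements in the literature give polynomial dependence on $\ell$ of possibly higher degree, in which case one either needs more derivatives of $g$ or concedes a slightly smaller exponent after re-optimising $L$. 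Since the paper outsources the entire statement, including this uniformity, to the cited Lemma 8, your proposal is best read as a correct reduction of the lemma to that quantitative Weyl-sum input rather than as a self-contained proof.
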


Define the probability measures
\begin{equation}
\label{tau}
\tau_m=\frac{1}{N}\sum_{\mu\in\mathcal{E}}\delta_{\mu/|\mu|}
\end{equation}
on the unit sphere, where $\delta_x$ is the Dirac delta function at $x$. By the equidistribution of lattice points on spheres, the $\tau_m$ converge weak-* \footnote{The statement $\nu_i\Rightarrow\nu$ means that, for every smooth bounded test function $g$, one has $\int gd\nu_i\to\int gd\nu$.} to the uniform measure on the unit sphere:
\begin{equation}
\label{weak}
\tau_m\Rightarrow\frac{\sin(\varphi)d\varphi d\psi}{4\pi}
\end{equation}
as $m\to\infty$, $m\not\equiv 0,4,7 \pmod 8$.

\begin{defin}
For $s>0$, the \textbf{Riesz $s$-energy} of $n$ (distinct) points $P_1,\dots,P_n$ on $\mathcal{S}^2$ is defined as
\begin{equation*}
E_s(P_1,\dots,P_n):=\sum_{i\neq j}\frac{1}{|P_i-P_j|^s}.
\end{equation*}
\end{defin}
\noindent
Bourgain, Sarnak and Rudnick computed the following precise asymptotics for the Riesz $s$-energy of the projected lattice points $\widehat{\mathcal{E}}_m\subset\mathcal{S}^2$ \eqref{projlp}.
\begin{prop}[{\cite[Theorem 1.1]{bosaru}, \cite[Theorem 4.1]{ruwiye}}]
\label{asyriesz}
Fix $0<s<2$. Suppose $m\to\infty$, $m\not\equiv 0,4,7 \pmod 8$. There is some $\delta>0$ so that
\begin{equation*}
E_s(\widehat{\mathcal{E}}_m)=I(s)\cdot N^2+O(N^{2-\delta})
\end{equation*}
where
\begin{equation*}
I(s)=\frac{2^{1-s}}{2-s}.
\end{equation*}
\end{prop}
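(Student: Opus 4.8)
The plan is to exploit the arithmetic structure of the pairwise distances between projected lattice points, reducing the Riesz energy to a weighted count of lattice-point pairs at prescribed distances, and then to match this count against the continuous energy integral by equidistribution. I begin by recording the \emph{continuous model}. For uniformly distributed $z,w\in\mathcal{S}^2$, placing $w$ at the north pole and writing $|z-w|=2\sin(\varphi/2)$ with $\varphi$ the polar angle, the normalised measure \eqref{weak} gives, for $0<s<2$ (where the singularity at $\varphi=0$ stays integrable),
\[
\iint_{\mathcal{S}^2\times\mathcal{S}^2}\frac{1}{|z-w|^s}\frac{dz}{4\pi}\frac{dw}{4\pi}
=\frac{1}{2}\int_0^\pi\frac{\sin\varphi}{(2\sin(\varphi/2))^s}\,d\varphi
=2^{1-s}\int_0^1 u^{1-s}\,du=\frac{2^{1-s}}{2-s}=I(s),
\]
using $\sin\varphi=2\sin(\varphi/2)\cos(\varphi/2)$ and $u=\sin(\varphi/2)$. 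This identifies $I(s)$ as the energy integral, so that the target asymptotic $E_s\approx I(s)N^2$ is exactly the statement that the empirical pair measure may be replaced by the uniform one.

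Next I pass to the arithmetic side. Writing $\widehat{\mu}:=\mu/\sqrt{m}$, for $\mu,\nu\in\mathcal{E}_m$ one has $|\mu-\nu|^2=2m-2\langle\mu,\nu\rangle\in 2\mathbb{Z}_{\geq 0}$, so setting $|\mu-\nu|^2=2t$ with $t=m-\langle\mu,\nu\rangle\in\{1,\dots,2m\}$ for $\mu\neq\nu$ we get $|\widehat{\mu}-\widehat{\nu}|=\sqrt{2t/m}$ and hence
\[
E_s(\widehat{\mathcal{E}}_m)=\Big(\frac{m}{2}\Big)^{s/2}\sum_{t=1}^{2m}\frac{\varrho(t)}{t^{s/2}},
\qquad
\varrho(t):=\#\{(\mu,\nu)\in\mathcal{E}_m^2:|\mu-\nu|^2=2t\}.
\]
The classical fact that $\langle z,w\rangle$ is uniform on $[-1,1]$ for uniform $z,w\in\mathcal{S}^2$ translates into the prediction $\varrho(t)\approx N^2/(2m)$, and substituting this together with $\sum_{t\leq 2m}t^{-s/2}\sim\frac{2}{2-s}(2m)^{1-s/2}$ reproduces $I(s)N^2$ after the powers of $m$ and $2$ collapse. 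Thus the whole problem is to establish this prediction for $\varrho(t)$ in the $t^{-s/2}$-weighted average.

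I would then carry this out in a per-$\mu$ form and split at a threshold. For fixed $\mu$, rotational symmetry gives $\int_{\mathcal{S}^2}|\widehat{\mu}-w|^{-s}\frac{dw}{4\pi}=I(s)$, so it suffices to show $\sum_{\nu\neq\mu}|\widehat{\mu}-\widehat{\nu}|^{-s}\approx N\,I(s)$ uniformly in $\mu$ (with the diagonal excised), and then sum over $\mu$. In the \emph{bulk} range $t>T$ (distances bounded below, where the kernel $w\mapsto|\widehat{\mu}-w|^{-s}$ is smooth), I would apply the quantitative equidistribution of Lemma \ref{equidlemma} to the truncated kernel, replacing each block count by its uniform value with a power-saving error; since $N\asymp m^{1/2}$, the relative rate $m^{-1/28-\epsilon}$ translates into an absolute error $O(N^{2-\delta})$. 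In the \emph{near-diagonal} range $t\leq T$ (short distances) I would invoke the cap-counting Lemma \ref{lemmachi}: as $\nu$ with $|\mu-\nu|^2=2t$ lies in a spherical cap of $\mathbb{R}^3$-radius $\sqrt{2t}$, one has $\varrho(t)\ll N m^\epsilon(1+t\,m^{-1/4})$, while the minimal spacing $|\widehat{\mu}-\widehat{\nu}|\geq 1/\sqrt{m}$ forces $t\geq 1$, confining and bounding the short-distance contribution.

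The main obstacle is the regime $s$ close to $2$. There the integral $I(s)$ is itself dominated by the near-diagonal, so the short-distance terms cannot simply be discarded as error—they must be \emph{matched} to the integral, which forces equidistribution of the point set against the singular kernel $w\mapsto|\widehat{\mu}-w|^{-s}$ down to caps finer than bare single-point equidistribution delivers; the $C^2$-norm of the smoothed kernel blows up as the cap shrinks, and this tension is precisely what makes a uniform power saving delicate. Resolving it via the spectral theory of modular forms, in tandem with the cap bounds, is exactly the analytic content of \cite[Theorem 1.1]{bosaru} (equivalently \cite[Theorem 4.1]{ruwiye}), to which I would ultimately appeal for the uniform estimate on $\varrho(t)$ at small $t$, yielding the claimed $E_s(\widehat{\mathcal{E}}_m)=I(s)N^2+O(N^{2-\delta})$ for all $0<s<2$.
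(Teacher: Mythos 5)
This proposition is not proven in the paper at all: it is imported verbatim from \cite[Theorem 1.1]{bosaru} (equivalently \cite[Theorem 4.1]{ruwiye}), so there is no internal argument to compare yours against. Your preparatory material is correct and genuinely clarifying: the evaluation of the continuous energy integral giving $I(s)=2^{1-s}/(2-s)$, the identity $|\mu-\nu|^2=2(m-\langle\mu,\nu\rangle)$ reducing $E_s(\widehat{\mathcal{E}}_m)$ to $(m/2)^{s/2}\sum_t\varrho(t)t^{-s/2}$, and the identification of the near-diagonal range as the obstruction when $s$ is close to $2$ all reflect the actual structure of the argument in \cite{bosaru}.

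As a \emph{proof}, however, your proposal is circular: at the decisive step you ``ultimately appeal to \cite[Theorem 1.1]{bosaru} for the uniform estimate on $\varrho(t)$ at small $t$,'' and that theorem \emph{is} the statement being proven. The gap is not merely formal, because the lemmas actually quoted in this paper cannot execute your two-range strategy. On the near-diagonal side, Lemma \ref{lemmachi} as you apply it gives only $\varrho(t)\ll Nm^{\epsilon}(1+t\,m^{-1/4})$, so the pairs with $t=O(1)$ are permitted to contribute up to $N(m/2)^{s/2+\epsilon}$, which for $s$ near $2$ is of order $m^{3/2-o(1)}$ and dwarfs $N^2\ll m^{1+\epsilon}$ from \eqref{totnumlp3}; one needs instead the plane-section bound \eqref{kappa3bound} (writing $\mu-\nu=e$ fixed and noting $\langle\mu,e\rangle=t$ confines $\mu$ to a plane) or the sharper cap estimates of \cite{bosaru}. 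On the bulk side, the error in Lemma \ref{equidlemma} is $O_g(m^{-1/28+\epsilon})$ with an unspecified dependence on $g$; for the truncated kernel the relevant $C^2$-norm grows like $\rho^{-s-2}$, and no choice of truncation $\rho$ reconciles this with a $1/28$ power saving, as you yourself observe. So the sketch cannot be closed with the tools available here; the honest conclusion is that the proposition is quoted from the literature, exactly as the paper does, rather than proven.
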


\subsection{Lemmas on lattice points on spheres}
\noindent
We will need the following lemma, that also appears in \cite{cammar}.
\begin{lemma}[cf. {\cite[Lemma 3.3]{cammar}}]
\label{1/5}
Suppose $m\to\infty$, $m\not\equiv 0,4,7 \pmod 8$. We have, for $i,j=1,2,3$, $i\neq j$, and $0\leq k\leq l$, $k+l=4$, 
\begin{equation*}
\frac{1}{m^2N}\sum_{(\mu^{(1)},\mu^{(2)},\mu^{(3)})\in\mathcal{E}}({\mu^{(i)}})^k({\mu^{(j)}})^l=
\begin{cases}
1/5+O\left(m^{-1/28+o(1)}\right) & k=0,
\\
0 & k=1,
\\
1/15+O\left(m^{-1/28+o(1)}\right)& k=2.
\end{cases}
\end{equation*}
\end{lemma}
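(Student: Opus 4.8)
The plan is to prove Lemma \ref{1/5} by reducing the three cases to the equidistribution statement of Lemma \ref{equidlemma}. For each case we write the normalized sum as an average of a fixed, bounded, $C^2$-smooth function evaluated at the projected lattice points $\mu/|\mu|\in\mathcal{S}^2$, and then replace that average by an integral over the unit sphere with a controlled error. Concretely, since $|\mu|=\sqrt m$ for every $\mu\in\mathcal{E}$, we have
\begin{equation*}
\frac{1}{m^2 N}\sum_{\mu\in\mathcal{E}}(\mu^{(i)})^k(\mu^{(j)})^l
=\frac{1}{N}\sum_{\mu\in\mathcal{E}}\left(\frac{\mu^{(i)}}{|\mu|}\right)^k\left(\frac{\mu^{(j)}}{|\mu|}\right)^l,
\end{equation*}
because $k+l=4$ cancels the factor $m^2=(|\mu|^2)^2$. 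Thus the function is $g_{k,l}(z)=z_i^k z_j^l$, a polynomial (hence $C^2$-smooth and bounded) on $\mathcal{S}^2$, and Lemma \ref{equidlemma} applies directly, yielding the error term $O(m^{-1/28+o(1)})$ in each case.

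The remaining work is to evaluate the spherical integrals $\int_{\mathcal{S}^2} z_i^k z_j^l\,\frac{dz}{4\pi}$ for the three relevant exponent patterns. First I would record the standard monomial integrals over the uniform probability measure on $\mathcal{S}^2$: by symmetry any monomial with an odd exponent in some coordinate integrates to zero, which immediately disposes of the $k=1$ case (there $z_i^1 z_j^3$ has odd powers in both $z_i$ and $z_j$, so the integral vanishes and the sum is exactly $0+O(\cdots)$; in fact with $k=1$ one can note the lattice sum is literally $0$ by the symmetry $\mu\mapsto(-\mu^{(i)},\dots)$, giving the exact value with no error term). For the even cases I would use the known moments of coordinates on the unit sphere, namely $\int_{\mathcal{S}^2}z_1^4\,\frac{dz}{4\pi}=1/5$ and $\int_{\mathcal{S}^2}z_1^2 z_2^2\,\frac{dz}{4\pi}=1/15$. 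These give the $k=0$ value (where $g=z_j^4$ integrates to $1/5$) and the $k=2$ value (where $g=z_i^2 z_j^2$ integrates to $1/15$) respectively.

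To justify those two spherical moments cleanly I would use the Gaussian-integral device: if $(g_1,g_2,g_3)\sim\mathcal{N}(0,I_3)$, then $(g_1,g_2,g_3)/|g|$ is uniform on $\mathcal{S}^2$ and is independent of $|g|$, so $\mathbb{E}[g_i^k g_j^l]=\mathbb{E}[|g|^{k+l}]\cdot\int_{\mathcal{S}^2}z_i^k z_j^l\,\frac{dz}{4\pi}$. With $k+l=4$ one has $\mathbb{E}[|g|^4]=15$ (since $|g|^2\sim\chi^2_3$ has mean $3$ and second moment $15$), while $\mathbb{E}[g_1^4]=3$ and $\mathbb{E}[g_1^2 g_2^2]=1$ by independence and the Gaussian moments $\mathbb{E}[g^2]=1,\ \mathbb{E}[g^4]=3$. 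Dividing gives $\int z_1^4=3/15=1/5$ and $\int z_1^2 z_2^2=1/15$, as needed.

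The main obstacle here is essentially bookkeeping rather than conceptual: the only genuine input is the unconditional equidistribution quantified in Lemma \ref{equidlemma}, and once the monomials are identified as bounded smooth test functions the result follows mechanically. I would take care to state explicitly that $g_{k,l}$ is independent of $m$ so that the implied constant in the error term is absolute (uniform over the finitely many choices of $i,j,k,l$), and to note the exactness of the $k=1$ case via the coordinate-sign symmetry of $\mathcal{E}$, which is cleaner than invoking equidistribution there.
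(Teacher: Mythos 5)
Your proof is correct and follows essentially the same route as the paper: both reduce the normalized lattice sum to an average of the fixed monomial $z_i^k z_j^l$ over the projected lattice points, invoke Lemma \ref{equidlemma} for the $O\left(m^{-1/28+o(1)}\right)$ error term, and dispose of the $k=1$ case exactly via the sign symmetries of $\mathcal{E}$. The only cosmetic difference is that the paper evaluates the spherical moments $1/5$ and $1/15$ by direct integration in spherical coordinates, whereas you use the Gaussian normalization device; both computations are valid.
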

\begin{proof}
The case $k=1$ immediately follows from the symmetries of the lattice point set $\mathcal{E}$ (cf. \cite[Lemma 2.3]{rudwi2}). Let $k=0$: by the equidistribution of lattice points on spheres (Lemma \ref{equidlemma}), we may approximate
\begin{equation}
\label{sumtoint}
\frac{1}{m^2N}\sum_{\mu\in\mathcal{E}}{\mu^{(j)}}^4
\sim
\frac{1}{4\pi}\int_{z\in\mathcal{S}^2}z_j^4dz
\end{equation}
up to an error of $m^{-1/28+o(1)}$. We use the spherical coordinates
\begin{equation*}
z=(\sin(\varphi)\cos(\psi),\sin(\varphi)\sin(\psi),\cos(\varphi)),
\qquad\quad
0\leq\varphi\leq\pi, \ 0\leq\psi\leq 2\pi.
\end{equation*}
Take $j=3$ (by the symmetry, \eqref{sumtoint} will be independent of the choice of $j$). The integral in \eqref{sumtoint} may be rewritten as
\begin{equation*}
\frac{1}{4\pi}\int_{0}^{\pi}\cos^4(\varphi)\sin(\varphi)d\varphi \int_{0}^{2\pi}d\psi=\frac{1}{5}.
\end{equation*}
Similarly, for $k=2$, Lemma \ref{equidlemma} yields
\begin{equation*}
\frac{1}{m^2N}\sum_{\mu\in\mathcal{E}}{\mu^{(i)}}^2{\mu^{(j)}}^2
\sim
\frac{1}{4\pi}\int_{z\in\mathcal{S}^2}z_i^2z_j^2dz.
\end{equation*}
The latter integral may be computed via the same method as the case $k=0$ and equals $1/15$.
\end{proof}

We denote $\mathcal{C}(4)$ the set of length $4$ {\em spectral correlations}, i.e., $4$-tuples of lattice points on spheres summing up to $0$
\begin{equation}
\label{C4}
\mathcal{C}(4):=\mathcal{C}_m(4)
=\left\{(\mu_1,\mu_2,\mu_3,\mu_4)\in \mathcal{E}_m^4 :
\mu_1+\mu_2+\mu_3+\mu_4=0
\right\}.
\end{equation}
Denote \cite[section 2.3]{brgafa}
\begin{equation*}
\kappa(R):=\max_{\Pi} \#\{\mu\in\mathbb{Z}^3 : \mu\in R\mathcal{S}^{2}\cap\Pi\}
\end{equation*}
the maximal number of lattice points in the intersection of $R\mathcal{S}^{2}\subset\mathbb{R}^3$ and any plane $\Pi$. Jarnik (see \cite{jarnik}, \cite[(2.6)]{brgafa}) found the upper bound
\begin{equation}
\label{kappa3bound}
\kappa(R)\ll R^\epsilon, \quad \forall\epsilon>0.
\end{equation}
An upper bound for $|\mathcal{C}(4)|$ may be obtained as follows. We fix two lattice points $\mu_1,\mu_2$, which may be done in $N^2$ ways. Supposing that $\mu_1+\mu_2\neq 0$, \footnote{The relation $\mu_1+\mu_2=0$ forces also $\mu_3+\mu_4=0$ and determines $N^2$ correlations.} then $\mu_1+\mu_2+\mu_3$ must lie in the intersection of two spheres centred respectively at the origin and at the point $\mu_1+\mu_2$, both of radius $\sqrt{m}$. As two spheres intersect in a circle, we obtain the upper bound
\begin{equation*}
|\mathcal{C}(4)|\ll N^2\cdot\kappa(\sqrt{m}).
\end{equation*}
By \eqref{kappa3bound} and \eqref{totnumlp3}, it now follows that
\begin{equation}
	\label{C4bd}
	|\mathcal{C}(4)|\ll N^{2+\epsilon} \qquad \forall\epsilon>0.
\end{equation}

\begin{lemma}
\label{4mom4corr}
We have the bound
\begin{equation}
\label{eqn4cor}
\sum_{\mathcal{E}^4\setminus\mathcal{C}(4)}\frac{1}{|\mu_1+\mu_2+\mu_3+\mu_4|^2}\ll N^{2+5/8+\epsilon}.
\end{equation}
\end{lemma}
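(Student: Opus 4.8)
The plan is to group the sum according to the value of the vector $v:=\mu_1+\mu_2+\mu_3+\mu_4\in\mathbb{Z}^3$ and decompose dyadically in $|v|$. Since each $\mu_i$ lies on $\sqrt m\,\mathcal S^{2}$ one has $|v|\le 4\sqrt m$, while on $\mathcal E^4\setminus\mathcal C(4)$ one has $v\ne 0$, hence $|v|\ge 1$. Writing
\[ \Phi(\rho):=\#\{(\mu_1,\mu_2,\mu_3,\mu_4)\in\mathcal E^4:\ |\mu_1+\mu_2+\mu_3+\mu_4|\le\rho\}, \]
the number of quadruples in a dyadic shell $R\le|v|<2R$ is at most $\Phi(2R)$, so that
\[ \sum_{\mathcal E^4\setminus\mathcal C(4)}\frac{1}{|\mu_1+\mu_2+\mu_3+\mu_4|^2}\ll\sum_{\substack{R=2^k\\ 1\le R\ll\sqrt m}}\frac{\Phi(2R)}{R^2}. \]
Everything is thus reduced to estimating $\Phi(\rho)$, and the key point is to produce a bound that is efficient both for small and for large $\rho$.

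I would establish two complementary bounds. For the first, efficient when $\rho$ is small, fix the pair $(\mu_1,\mu_2)$ (at most $N^2$ choices) and set $p=\mu_1+\mu_2$; the remaining pair must satisfy $\mu_3+\mu_4\in B_\rho(-p)$, the ball of radius $\rho$ about $-p$. There are $\ll 1+\rho^3$ lattice points $q\in\mathbb Z^3$ in this ball, and for each nonzero $q$ the number of representations $q=\mu_3+\mu_4$ with $\mu_3,\mu_4\in\mathcal E$ is $\ll m^\epsilon$: indeed such $\mu_3$ lie on the intersection of the two spheres of radius $\sqrt m$ centred at the origin and at $q$, a planar section, whence the count is at most $\kappa(\sqrt m)\ll m^\epsilon$ by Jarnik's bound \eqref{kappa3bound}. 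The degenerate value $q=0$ contributes at most $N$ and occurs only when $|p|\le\rho$; summing this exceptional term over $(\mu_1,\mu_2)$ via the cap bound is harmless. Altogether this yields $\Phi(\rho)\ll N^{2+\epsilon}(1+\rho^3)$. For the second bound, efficient when $\rho$ is large, fix $(\mu_1,\mu_2,\mu_3)$ (at most $N^3$ choices); then $\mu_4$ must lie in $\mathcal E\cap B_\rho(-(\mu_1+\mu_2+\mu_3))$, a spherical cap of radius $\ll\rho$, so by the Bourgain--Rudnick cap bound (Lemma \ref{lemmachi}) there are $\ll m^\epsilon(1+\rho^2/m^{1/4})$ choices, giving $\Phi(\rho)\ll N^{3+\epsilon}(1+\rho^2/m^{1/4})$.

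Finally I would insert these into the dyadic sum, using the first bound for $R\le\rho_\ast:=N/m^{1/4}$ and the second for $R>\rho_\ast$ (the crossover of the two estimates). The geometric sums collapse, and the dominant contribution in both ranges is $\ll N^{3+\epsilon}/m^{1/4}$. Since $N\ll m^{1/2+\epsilon}$ by \eqref{totnumlp3}, one has $m^{1/4}\gg N^{1/2-\epsilon}$, so this is $\ll N^{5/2+\epsilon}$, which is comfortably stronger than the asserted bound $N^{2+5/8+\epsilon}$.

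The main obstacle is the small-$\rho$ regime. The naive estimate — fixing three of the four points and controlling the fourth by a spherical cap — only gives $\Phi(\rho)\ll N^{3+\epsilon}$, which does not decay as $\rho\to 0$ and is far too weak once it is weighted by $R^{-2}$ and summed. The gain comes from regrouping the four points into the two chord-sums $p=\mu_1+\mu_2$ and $q=\mu_3+\mu_4$ and exploiting that the number of lattice-point pairs summing to a fixed nonzero vector is essentially bounded (Jarnik), so that the ball $B_\rho(-p)$ need only be counted with the cheap volume bound $\rho^3$. Balancing this planar-section input against the cap bound at $\rho_\ast=N/m^{1/4}$ is what produces the saving; the only delicate bookkeeping is the treatment of the degenerate configurations $\mu_1+\mu_2=0$ and $\mu_3+\mu_4=0$, where the representation count jumps from $m^\epsilon$ up to $N$.
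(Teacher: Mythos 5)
Your argument is correct, and it takes a genuinely different route from the paper's. The paper splits the sum into three ranges $1\le|v|\le A$, $A\le|v|\le B$, $|v|\ge B$ with fixed cutoffs: in the first it uses the same planar-section/Jarnik input \eqref{kappa3bound} as you do (two spheres of radius $\sqrt m$ meet in a circle, so a nonzero chord-sum has $\ll m^\epsilon$ representations), in the second it applies the cap bound of Lemma \ref{lemmachi} with cap radius $B$ while bounding the denominator below by $A^2$, and in the third it uses the trivial count $N^4$; optimizing $(A,B)=(N^{5/8},N^{11/16})$ yields exactly $N^{2+5/8+\epsilon}$. Your dyadic decomposition is a refinement of this scheme: within each shell $R\le|v|<2R$ the cap radius and the denominator are both of size $R$, which eliminates the cross term $N^3B^2/(A^2m^{1/4})$ that limits the paper's exponent, and the trivial third range becomes unnecessary since $\sum_R N^{3+\epsilon}m^{-1/4}$ over $O(\log m)$ scales is already acceptable. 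The two arithmetic inputs are identical (Jarnik for the small-$|v|$ count, Bourgain--Rudnick caps for the large-$|v|$ count), your bookkeeping of the degenerate configurations $\mu_1+\mu_2=0$ or $\mu_3+\mu_4=0$ is sound, and your final bound $N^{3+\epsilon}/m^{1/4}\ll N^{5/2+\epsilon}$ (using only the upper bound in \eqref{totnumlp3}) is strictly stronger than the stated $N^{2+5/8+\epsilon}$; fed into Lemma \ref{R4} it would even improve the error exponent $11/16$ to $3/4$.
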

\begin{proof}
This proof is inspired by the two-dimensional analogue \cite[Lemma 6.2]{rudwig}. Let $v:=\mu_1+\mu_2+\mu_3+\mu_4$: as $|v|\neq 0$, we clearly have $|v|\geq 1$. we separate the summation in \eqref{eqn4cor} over three ranges: $1\leq|v|\leq A$, $A\leq|v|\leq B$, and $|v|\geq B$, where $A=A(m)$ and $B=B(m)$ are parameters.

\underline{First range: $1\leq|v|\leq A$.} Given $v$ and $\mu_1,\mu_2$ such that $\mu_1+\mu_2\neq v$, the number of solutions $(\mu_3,\mu_4)$ to
\begin{equation*}
v=\mu_1+\mu_2+\mu_3+\mu_4
\end{equation*}
is $\ll\kappa(\sqrt{m})\ll m^\epsilon$: to see this, we use the same argument as in the bound for $|\mathcal{C}(4)|$ above. Therefore,
\begin{multline}
\label{regi1}
\sum_{1\leq|v|\leq A}\frac{1}{|\mu_1+\mu_2+\mu_3+\mu_4|^2}\ll N^{2+\epsilon}\sum_{1\leq|v|\leq A}\frac{1}{|v|^2}\ll N^{2+\epsilon}\int_{1\leq|x|\leq A}\frac{dx}{|x|^2}\\\ll N^{2+\epsilon}A.
\end{multline}

\underline{Second range: $A\leq|v|\leq B$.} We fix $\mu_1,\mu_2,\mu_3$ and write
\begin{equation}
\label{regi2'}
\sum_{A\leq|v|\leq B}\frac{1}{|\mu_1+\mu_2+\mu_3+\mu_4|^2}\ll \frac{1}{A^2}N^3\cdot|\{\mu_4 : A\leq|v|\leq B\}|.
\end{equation}
We require an estimate for the second factor on the RHS of \eqref{regi2'}. Consider the geometric picture: the vector $v$ lies on the sphere centred at $\mu_1+\mu_2+\mu_3$ of radius $\sqrt{m}$, and also in the difference set of the two balls centred at the origin of radii $B,A$. Therefore, $\mu_4$ lies on a spherical cap of radius $2B$ of a sphere of radius $\sqrt{m}$, hence the bound
\begin{equation}
\label{applychi}
|\{\mu_4 : A\leq|v|\leq B\}|
\leq\chi(\sqrt{m}, 2B)\ll m^\epsilon\left(1+\frac{B^2}{\sqrt{m}^{1/2}}\right)
\end{equation}
via Lemma \ref{lemmachi}. Replacing \eqref{applychi} into \eqref{regi2'},
\begin{equation}
\label{regi2}
\sum_{A\leq|v|\leq B}\frac{1}{|\mu_1+\mu_2+\mu_3+\mu_4|^2}\ll \frac{N^3}{A^2}\cdot m^\epsilon\left(1+\frac{B^2}{m^{1/4}}\right).
\end{equation}

\underline{Third range: $|v|\geq B$.} Here we have
\begin{equation}
\label{regi3}
\sum_{|v|\geq B}\frac{1}{|\mu_1+\mu_2+\mu_3+\mu_4|^2}\ll \frac{N^4}{B^2}.
\end{equation}
Collecting the estimates \eqref{regi1}, \eqref{regi2} and \eqref{regi3}, we obtain
\begin{equation*}
\sum_{\mathcal{E}^4\setminus\mathcal{C}(4)}\frac{1}{|\mu_1+\mu_2+\mu_3+\mu_4|^2}\ll N^{2+\epsilon} A+\frac{N^3}{A^2}\cdot m^\epsilon\left(1+\frac{B^2}{m^{1/4}}\right)+\frac{N^4}{B^2}.
\end{equation*}
Bearing in mind \eqref{totnumlp3}, the optimal choice for the parameters is $(A,B)=(N^{5/8},N^{11/16})$, so that one has the estimate
\begin{equation*}
\sum_{\mathcal{E}^4\setminus\mathcal{C}(4)}\frac{1}{|\mu_1+\mu_2+\mu_3+\mu_4|^2}\ll N^{2+5/8+\epsilon}
\end{equation*}
as claimed.
\end{proof}

\section{Second moment of $r$ and of its derivatives: proof of Theorem \ref{varthm}}
\label{sec2mom}
\noindent
The present section is dedicated to proving Theorem \ref{varthm}.
\subsection{Statement of estimates}
We recall that $A$ is the area of the surface $\Sigma$, $r$ the covariance function \eqref{rF}, and Definition \ref{XYdef} for $X,X',Y,Y'$.

\begin{lemma}
\label{2momr}
Assume $\Sigma$ is of nowhere zero Gauss-Kronecker curvature. Then we have the following estimates:
\begin{align}
\label{R2}
&\iint_{\Sigma^2}r^2d\sigma d\sigma'
=\frac{A^2}{N}+O\left(\frac{1}{m^{1-\epsilon}}\right);
\\
\label{trX}
&\iint_{\Sigma^2}tr(X)d\sigma d\sigma'
=-\frac{2A^2}{N}
+O\left(\frac{1}{m^{11/16-\epsilon}}\right);
\\
\label{trX'}
&\iint_{\Sigma^2}tr(X')d\sigma d\sigma'
=-\frac{2A^2}{N}
+O\left(\frac{1}{m^{11/16-\epsilon}}\right).
\end{align}
\end{lemma}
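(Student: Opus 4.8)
The plan is to treat all three identities by one mechanism: expand the integrand as a double sum over the lattice point set $\mathcal{E}$, integrate the resulting exponentials over $\Sigma^2$, isolate the ``diagonal'' (zero total frequency) contribution as the main term, and control the off-diagonal by the decay of the Fourier transform of surface measure together with the lattice-point sum estimates of Section \ref{seclp}. Throughout I write $J_\Sigma(w):=\int_\Sigma e^{2\pi i\langle w,\sigma\rangle}\,d\sigma$, so that $J_\Sigma(0)=A$ and, decomposing $\Sigma$ into the graph pieces of the proof of Proposition \ref{approxKR} and applying stationary phase (here the nowhere vanishing Gauss-Kronecker curvature is essential), $|J_\Sigma(w)|\ll_\Sigma(1+|w|)^{-1}$; the same decay holds for $J_{\Omega_{ij}}(w):=\int_\Sigma\Omega_{ij}(\sigma)e^{2\pi i\langle w,\sigma\rangle}\,d\sigma$ since each $\Omega_{ij}$ is smooth.

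For \eqref{R2}, writing $r^2=\frac1{N^2}\sum_{\mu,\nu\in\mathcal E}e^{2\pi i\langle\mu-\nu,\sigma-\sigma'\rangle}$ and integrating gives $\iint_{\Sigma^2}r^2=\frac1{N^2}\sum_{\mu,\nu}|J_\Sigma(\mu-\nu)|^2$. The diagonal $\mu=\nu$ contributes $\frac1{N^2}\cdot N\cdot A^2=\frac{A^2}N$, the claimed main term. For the off-diagonal I would use $|J_\Sigma(\mu-\nu)|^2\ll|\mu-\nu|^{-2}$ and, with $\hat\mu=\mu/\sqrt m$, note $\sum_{\mu\neq\nu}|\mu-\nu|^{-2}=\frac1m E_2(\widehat{\mathcal E}_m)$. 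The point is the borderline energy bound $E_2(\widehat{\mathcal E}_m)\ll_\epsilon m^\epsilon N^2$: since distinct lattice points satisfy $|\mu-\nu|^2\ge 2$, hence $|\hat\mu-\hat\nu|\ge\sqrt{2/m}$, one has $|\hat\mu-\hat\nu|^{-2}\le(m/2)^{(2-s)/2}|\hat\mu-\hat\nu|^{-s}$, so $E_2\le(m/2)^{(2-s)/2}E_s(\widehat{\mathcal E}_m)$; taking $s=2-2\epsilon$ and invoking Proposition \ref{asyriesz} ($E_s\ll_s N^2$) yields the claim. Hence the off-diagonal is $\ll\frac1{N^2}\cdot\frac1m\cdot m^\epsilon N^2\ll m^{-1+\epsilon}$, establishing \eqref{R2}.

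For \eqref{trX} I would first simplify $X$. As $\Omega=I_3-\overrightarrow{n}^{\,T}\overrightarrow{n}$ (see \eqref{omegafull}) is the orthogonal projection onto $T_\sigma\Sigma$ and $Q^2=(L^T\Omega L)^{-1}$, the identity $\Omega L(L^T\Omega L)^{-1}L^T\Omega=\Omega$ (valid when $n_3\neq0$) together with cyclicity of the trace gives $tr(X)=-\frac{D\Omega D^T}{(1-r^2)M}$. Splitting $\frac1{1-r^2}=1+\frac{r^2}{1-r^2}$ turns the correction into $-\frac1M\iint\frac{r^2}{1-r^2}D\Omega D^T=\iint r^2\,tr(X)$, which is $\ll m^{-11/16+\epsilon}$ by Lemma \ref{X2Y4}. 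It remains to evaluate $-\frac1M\iint_{\Sigma^2}D\Omega D^T$. Expanding $D_iD_j$ as a double lattice sum and integrating, the diagonal $\nu=-\mu$ part equals $-\frac{4\pi^2 A}{MN^2}\sum_\mu\sum_{i,j}\mu^{(i)}\mu^{(j)}\int_\Sigma\Omega_{ij}\,d\sigma$; the exact second-moment relations $\sum_\mu\mu^{(i)}\mu^{(j)}=\frac{Nm}3\delta_{ij}$ (from the symmetries of $\mathcal E$), $\int_\Sigma tr(\Omega)\,d\sigma=\int_\Sigma(3-1)\,d\sigma=2A$, and $M=4\pi^2m/3$ collapse this to $-\frac{2A^2}N$. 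The off-diagonal $\mu+\nu\neq0$ is controlled exactly as for \eqref{R2} (the factor $|\mu^{(i)}\nu^{(j)}|\le m$ is absorbed by $1/M\asymp 1/m$), or alternatively by the range-splitting of Lemma \ref{4mom4corr}, giving an error $\ll m^{-1+\epsilon}$; thus the total error is dominated by the $m^{-11/16+\epsilon}$ term. Finally \eqref{trX'} follows from the symmetry $X'(\sigma,\sigma')=X(\sigma',\sigma)$.

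The main obstacle is the arithmetic input. For \eqref{R2} everything hinges on the borderline Riesz $2$-energy estimate: although Proposition \ref{asyriesz} covers only $0<s<2$, the minimal-separation comparison above circumvents the logarithmic divergence of the continuous $2$-energy, and this is the delicate step. For \eqref{trX} the genuine work splits between the trace simplification (the projection identity) and the control of $\iint r^2\,tr(X)$, precisely the fourth-moment-type quantity deferred to Lemma \ref{X2Y4}; the off-diagonal of the main sum is comparatively harmless. One must also account for the singular set where $r$ is near $\pm1$ and $\frac1{1-r^2}$ is large, but its contribution is absorbed into the $\mathcal R_4$ bound exactly as in Lemma \ref{Sbound}.
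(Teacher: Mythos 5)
Your proposal is correct and follows essentially the same route as the paper: diagonal/off-diagonal splitting of the lattice sums, the stationary-phase decay $\int_\Sigma e^{2\pi i\langle w,\sigma\rangle}\psi\,d\sigma\ll|w|^{-1}$ for the off-diagonal combined with the Riesz $s$-energy at $s=2-\epsilon$, the exact second moments $\sum_\mu\mu^{(i)}\mu^{(j)}=\tfrac{mN}{3}\delta_{ij}$ for the diagonal, the projection identity $\Omega L(L^T\Omega L)^{-1}L^T\Omega=\Omega$ to reduce $tr(X)$ to $-D\Omega D^T/((1-r^2)M)$, and the singular-set/fourth-moment machinery for the $r^2$-correction. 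The only cosmetic difference is that you quote the $\iint r^2\,tr(X)$ bound from Lemma \ref{X2Y4} while the paper isolates it as estimate \eqref{e2} of Lemma \ref{prelim}; these are the same computation and no circularity arises.
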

Lemma \ref{2momr} will be proven in section \ref{secpf2momr}. We define
\begin{equation}
\label{H}
\mathcal{H}=\mathcal{H}_\Sigma(\mathcal{E}):=\iint_{\Sigma^2}\frac{1}{N}\sum_{\mu\in\mathcal{E}}\left\langle\frac{\mu}{|\mu|},\overrightarrow{n}(\sigma)\right\rangle^2\cdot\left\langle\frac{\mu}{|\mu|},\overrightarrow{n}(\sigma')\right\rangle^2d\sigma d\sigma'.
\end{equation}
\begin{lemma}
\label{2der}
Assume $\Sigma$ is of nowhere zero Gauss-Kronecker curvature. Then we have the following estimate:
\begin{equation}
\label{trY'Y}
\iint_{\Sigma^2}tr(Y'Y)d\sigma d\sigma'
=\frac{3}{N}(A^2+3\mathcal{H})+O\left(\frac{1}{m^{11/16-\epsilon}}\right).
\end{equation}
\end{lemma}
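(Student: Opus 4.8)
The plan is to use the symmetries recorded after Definition \ref{defdh}, namely $H(\sigma',\sigma)=H(\sigma,\sigma')$, $D(\sigma',\sigma)=-D(\sigma,\sigma')$ and $r(\sigma',\sigma)=r(\sigma,\sigma')$, which together with the symmetry of $\Omega,Q$ give $Y'=Y^{T}$; hence $tr(Y'Y)=\|Y\|_{F}^{2}$ is a genuine (nonnegative) quadratic. I would split $Y=Y_{0}+Y_{1}$ along the two summands of Definition \ref{XYdef}, with $Y_{0}=-\frac{1}{M}QL^{T}\Omega H\Omega'LQ'$ the Hessian part and $Y_{1}=-\frac{r}{(1-r^{2})M}QL^{T}\Omega D^{T}D\Omega'LQ'$ the remainder, so that $tr(Y'Y)=\|Y_{0}\|_{F}^{2}+2\,tr(Y_{0}'Y_{1})+\|Y_{1}\|_{F}^{2}$. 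The leading term will come entirely from $\iint_{\Sigma^{2}}\|Y_{0}\|_{F}^{2}$, where $Y_{0}$ is everywhere bounded and smooth, so no care near the singular set $S$ is needed; the two remaining terms carry a factor $r/(1-r^{2})$, so I would treat them on $\Sigma^{2}\setminus S$ and bound $\iint_{S}tr(Y'Y)\ll\mathrm{meas}(S)\ll m^{-11/16+\epsilon}$ using $Y\ll1$ (Lemma \ref{XY}) and Lemma \ref{Sbound}.

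For the main term, substituting $H=-\frac{4\pi^{2}}{N}\sum_{\mu}e^{2\pi i\langle\mu,\sigma-\sigma'\rangle}\mu^{T}\mu$ and $M=4\pi^{2}m/3$ and writing $\widehat\mu=\mu/|\mu|$ gives
\[
Y_{0}=\frac{3}{N}\sum_{\mu\in\mathcal E}e^{2\pi i\langle\mu,\sigma-\sigma'\rangle}\,w_{\mu}(\sigma)\,w_{\mu}(\sigma')^{T},\qquad w_{\mu}(\sigma):=QL^{T}\Omega(\sigma)\widehat\mu^{T}.
\]
The key simplification is the projection identity $\Omega L(L^{T}\Omega L)^{-1}L^{T}\Omega=\Omega$, valid because $\Omega$ is the orthogonal projection onto the tangent plane $\overrightarrow{n}^{\perp}$ and $\Omega L$ is an isomorphism onto it when $n_{3}\neq0$; combined with \eqref{Q^2} it yields $|w_{\mu}(\sigma)|^{2}=\widehat\mu\,\Omega\,\widehat\mu^{T}=1-\langle\widehat\mu,\overrightarrow{n}(\sigma)\rangle^{2}$. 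Expanding $\|Y_{0}\|_{F}^{2}=\frac{9}{N^{2}}\sum_{\mu,\nu}e^{2\pi i\langle\mu-\nu,\sigma-\sigma'\rangle}\langle w_{\mu}(\sigma),w_{\nu}(\sigma)\rangle\langle w_{\mu}(\sigma'),w_{\nu}(\sigma')\rangle$ and integrating over $\Sigma^{2}$, the diagonal $\mu=\nu$ produces $\frac{9}{N^{2}}\sum_{\mu}\big(\int_{\Sigma}(1-\langle\widehat\mu,\overrightarrow{n}\rangle^{2})\,d\sigma\big)^{2}$. I would evaluate this with the \emph{exact} identity $\frac{1}{N}\sum_{\mu}\langle\widehat\mu,\overrightarrow{n}\rangle^{2}=\tfrac13$ (forced by the coordinate symmetries of $\mathcal E$, hence with no equidistribution error) and the definition \eqref{H} of $\mathcal H$; the three pieces $\frac{9A^{2}}{N}$, $-\frac{6A^{2}}{N}$, $\frac{9\mathcal H}{N}$ then sum to exactly $\frac{3}{N}(A^{2}+3\mathcal H)$.

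It remains to bound the off-diagonal of $\|Y_{0}\|_{F}^{2}$ and the two $Y_{1}$-terms by $m^{-11/16+\epsilon}$. The off-diagonal equals $\frac{9}{N^{2}}\sum_{\mu\neq\nu}\big|\int_{\Sigma}e^{2\pi i\langle\mu-\nu,\sigma\rangle}\langle w_{\mu}(\sigma),w_{\nu}(\sigma)\rangle\,d\sigma\big|^{2}$, and the nonvanishing Gauss--Kronecker curvature gives the oscillatory decay $\ll|\mu-\nu|^{-1}$; since $\sum_{\mu\neq\nu}|\mu-\nu|^{-2}=m^{-1}E_{2}(\widehat{\mathcal E}_{m})\ll m^{-1}N^{2+\epsilon}$ (from Proposition \ref{asyriesz} by interpolation as $s\uparrow2$), the off-diagonal is $\ll N^{-2}\cdot m^{-1}N^{2+\epsilon}=m^{-1+\epsilon}$. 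For $\|Y_{1}\|_{F}^{2}$ I would use that $Y_{1}$ is rank one to obtain the identity $\|Y_{1}\|_{F}^{2}=r^{2}\,tr(X)\,tr(X')$, so that Lemma \ref{X2Y4} (the bound on $\iint r^{2}\,tr(X)$, together with $|tr(X')|\ll1$) gives $\ll m^{-11/16+\epsilon}$.

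The main obstacle is the cross term $\iint_{\Sigma^{2}}tr(Y_{0}'Y_{1})$: Cauchy--Schwarz against $\iint\|Y_{0}\|_{F}^{2}\sim1/N$ and $\iint\|Y_{1}\|_{F}^{2}\ll m^{-11/16+\epsilon}$ is too lossy (it yields only $m^{-19/32}$), so it must be expanded honestly. Here $Y_{0}$ supplies one frequency, the factor $r$ one more, and the two copies of $D$ two more, giving a length-four lattice sum with phase $e^{2\pi i\langle\nu+\kappa+\lambda_{1}+\lambda_{2},\,\sigma-\sigma'\rangle}$. Integrating over $\Sigma^{2}$ and applying the curvature decay, the part where $\nu+\kappa+\lambda_{1}+\lambda_{2}=0$ reduces to the spectral correlations \eqref{C4} and contributes $\ll N^{-2+\epsilon}$ via $|\mathcal C(4)|\ll N^{2+\epsilon}$, while the complementary part is controlled by Lemma \ref{4mom4corr}; the $N^{-4}$ normalisation converts its bound $N^{2+5/8+\epsilon}$ into $N^{-11/8+\epsilon}=m^{-11/16+\epsilon}$, which is precisely the stated error and the source of the exponent $11/16$. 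Collecting the exact diagonal main term with these estimates (and the singular-set bound) completes the proof.
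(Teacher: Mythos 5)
Your proposal is correct and follows essentially the same route as the paper: the main term is the Hessian block of $Y$, whose diagonal ($\mu=\nu$) contribution is evaluated exactly via the coordinate symmetries of $\mathcal{E}$ to give $\frac{3}{N}(A^2+3\mathcal{H})$, the off-diagonal is handled by the curvature decay of Proposition \ref{stein} together with the Riesz-energy bound of Proposition \ref{asyriesz}, and the $D$-dependent remainders reduce to the length-four correlation estimates ($|\mathcal{C}(4)|\ll N^{2+\epsilon}$ and Lemma \ref{4mom4corr}), with the singular set controlled by Lemmas \ref{Sbound} and \ref{R4}. Your decomposition $tr(Y'Y)=\|Y_0\|_F^2+2\,tr(Y_0^TY_1)+\|Y_1\|_F^2$, the explicit projection identity $\Omega L(L^T\Omega L)^{-1}L^T\Omega=\Omega$, and the rank-one identity $\|Y_1\|_F^2=r^2\,tr(X)\,tr(X')$ make the error analysis more explicit than the paper's (which absorbs the cross and square terms into a single $O(\cdot)$ and defers to the analogue of \eqref{e2}), but the underlying estimates are identical.
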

Lemma \ref{2der} will be proven in section \ref{secpf2der}. By the equidistribution of lattice points on spheres (Lemma \ref{equidlemma}), one may express $\mathcal{H}$ in terms of $\Sigma$ only, as shown in the following result.
\begin{lemma}
\label{IH}
As $m\to\infty$, $m\not\equiv 0,4,7 \pmod 8$, one has the estimate
\begin{equation*}
\mathcal{H}=\frac{1}{15}(A^2+2\mathcal{I})+O\left(\frac{1}{m^{1/28-\epsilon}}\right).
\end{equation*}
\end{lemma}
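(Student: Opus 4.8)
The plan is to recognise the inner sum in the definition \eqref{H} of $\mathcal{H}$ as a normalised lattice average of a fixed smooth test function on the sphere, and then invoke the equidistribution of lattice points (Lemma \ref{equidlemma}). Concretely, for each fixed pair $(\sigma,\sigma')\in\Sigma^2$ I would introduce the function $g_{\sigma,\sigma'}:\mathcal{S}^2\to\mathbb{R}$,
\begin{equation*}
g_{\sigma,\sigma'}(z):=\langle z,\overrightarrow{n}(\sigma)\rangle^2\langle z,\overrightarrow{n}(\sigma')\rangle^2,
\end{equation*}
so that $\mathcal{H}=\iint_{\Sigma^2}\frac{1}{N}\sum_{\mu\in\mathcal{E}}g_{\sigma,\sigma'}(\mu/|\mu|)\,d\sigma d\sigma'$. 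Applying Lemma \ref{equidlemma} for each $(\sigma,\sigma')$ replaces the lattice average by $\frac{1}{4\pi}\int_{\mathcal{S}^2}g_{\sigma,\sigma'}(z)\,dz$, up to an error term. It then remains to compute this spherical integral and to control the error after integration over $\Sigma^2$.

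The main point requiring care is that the error term from Lemma \ref{equidlemma} be uniform in $(\sigma,\sigma')$, so that it survives integration over $\Sigma^2$; this is in fact the only mild obstacle, and it is easily dispatched. Indeed, $g_{\sigma,\sigma'}$ is a homogeneous degree-four polynomial in $z$ whose coefficients are products of components of the unit vectors $\overrightarrow{n}(\sigma),\overrightarrow{n}(\sigma')$, hence bounded by an absolute constant. Its $C^2$ norm on $\mathcal{S}^2$ is therefore bounded independently of $\sigma,\sigma'$, and the implied constant $O_g$ in Lemma \ref{equidlemma} may be taken uniform. Integrating the error $O(m^{-1/28+\epsilon})$ over the finite-measure set $\Sigma^2$ (of total measure $A^2$) yields a contribution of $O(m^{-1/28+\epsilon})$, matching the claimed error.

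The crux of the computation is the spherical integral of $g_{\sigma,\sigma'}$. Writing $\overrightarrow{n}=\overrightarrow{n}(\sigma)$, $\overrightarrow{n}'=\overrightarrow{n}(\sigma')$ and expanding, one needs the fourth-moment tensor of the uniform measure on $\mathcal{S}^2$,
\begin{equation*}
\frac{1}{4\pi}\int_{\mathcal{S}^2}z_iz_jz_kz_l\,dz=\frac{1}{15}\left(\delta_{ij}\delta_{kl}+\delta_{ik}\delta_{jl}+\delta_{il}\delta_{jk}\right),
\end{equation*}
which is the unique fully symmetric isotropic tensor whose diagonal entries reproduce the values $1/5$ and $1/15$ recorded in Lemma \ref{1/5}. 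Contracting this against $n_in_jn'_kn'_l$ and using $|\overrightarrow{n}|=|\overrightarrow{n}'|=1$ gives
\begin{equation*}
\frac{1}{4\pi}\int_{\mathcal{S}^2}g_{\sigma,\sigma'}(z)\,dz=\frac{1}{15}\left(1+2\langle\overrightarrow{n}(\sigma),\overrightarrow{n}(\sigma')\rangle^2\right).
\end{equation*}
Finally, integrating this expression over $\Sigma^2$ and recalling both $\iint_{\Sigma^2}d\sigma d\sigma'=A^2$ and the definition \eqref{I} of $\mathcal{I}$ produces the main term $\frac{1}{15}(A^2+2\mathcal{I})$, which together with the uniform error bound above completes the proof. (Alternatively, one could avoid the tensor identity and expand $g_{\sigma,\sigma'}$ in coordinates, reducing every monomial integral directly to the cases tabulated in Lemma \ref{1/5}; this is longer but entirely mechanical.)
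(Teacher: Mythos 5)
Your proposal is correct and follows essentially the same route as the paper: both rest on the equidistribution of lattice points (Lemma \ref{equidlemma}) to replace the normalised sum over $\mathcal{E}$ by a spherical average, whose fourth moments yield $\frac{1}{15}(1+2\langle\overrightarrow{n}(\sigma),\overrightarrow{n}(\sigma')\rangle^2)$ pointwise and hence $\frac{1}{15}(A^2+2\mathcal{I})$ after integration. The only cosmetic difference is that the paper expands in coordinates and invokes the tabulated moments of Lemma \ref{1/5}, whereas you contract against the isotropic fourth-moment tensor directly (and you make the uniformity of the error in $(\sigma,\sigma')$ explicit, which the paper leaves implicit).
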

\begin{proof}
Firstly, using Lemma \ref{1/5}, we estimate the summation
\begingroup
\allowdisplaybreaks
\begin{align}
\label{IHsteps}
\notag&\sum_{\mu\in\mathcal{E}}\left\langle\frac{\mu}{|\mu|},\overrightarrow{n}(\sigma)\right\rangle^2\cdot\left\langle\frac{\mu}{|\mu|},\overrightarrow{n}(\sigma')\right\rangle^2
\\\notag&=\frac{N}{15}\left(
3\sum_{i=1}^{3}n_i^2{n'_i}^2
+\sum_{\substack{i=1\\i\neq j}}^{3}n_i^2{n'_j}^2
+\sum_{\substack{i=1\\i\neq j}}^{3}4n_in_jn'_in'_j
+
O\left(\frac{1}{m^{1/28-\epsilon}}\right)
\right)
\\&=\frac{N}{15}\left(
1+2\langle\overrightarrow{n},\overrightarrow{n}'\rangle^2
+O\left(\frac{1}{m^{1/28-\epsilon}}\right)
\right).
\end{align}
\endgroup
By substituting \eqref{IHsteps} into \eqref{H}, we obtain
\begin{align*}
\mathcal{H}_\Sigma(\mathcal{E})&=\frac{1}{15}\iint_{\Sigma^2}\left(
1+2\langle\overrightarrow{n},\overrightarrow{n}'\rangle^2
\right)d\sigma d\sigma'+
O\left(\frac{1}{m^{1/28-\epsilon}}\right)\\&=\frac{1}{15}(A^2+2\mathcal{I})+
O\left(\frac{1}{m^{1/28-\epsilon}}\right).
\end{align*}
\end{proof}

We may now establish the asymptotics for the nodal intersection length variance.
\begin{proof}[Proof of Theorem \ref{varthm} assuming Lemmas \ref{2momr} and \ref{2der}]
On substituting the estimates \eqref{R2}, \eqref{trX}, \eqref{trX'} and \eqref{trY'Y} into the approximate Kac-Rice formula Proposition \ref{approxKR}, we obtain
\begin{multline*}
\text{Var}(\mathcal{L})=M\left[\frac{1}{8}\frac{A^2}{N}
+\frac{1}{16}\left(-\frac{2A^2}{N}\right)
+\frac{1}{16}\left(-\frac{2A^2}{N}\right)
+\frac{1}{32}\frac{3}{N}(A^2+3\mathcal{H})
\right.
\\
\left.
+O\left(\frac{1}{m^{11/16-\epsilon}}\right)\right]
=\frac{\pi^2}{24}\frac{m}{N}\left(9\mathcal{H}-A^2\right)+O\left(\frac{1}{m^{3/16-\epsilon}}\right).
\end{multline*}
The estimate for $\mathcal{H}$ given by Lemma \ref{IH} now implies \eqref{var}.
\end{proof}
In the rest of section \ref{sec2mom} we prove Lemmas \ref{2momr} and \ref{2der}. Firstly, we will require a bound for oscillatory integrals on a surface.
\begin{prop}[{\cite[section 7]{stein1}}]
\label{stein}
If $\Sigma$ is a hypersurface in $\mathbb{R}^n$ with non-vanishing Gaussian curvature, then
\begin{equation*}
\int_\Sigma e^{ix\cdot\xi}\psi(x)d\sigma(x)=O(|\xi|^{-(n-1)/2})
\end{equation*}
as $|\xi|\to\infty$, whenever $\psi\in C_0^{\infty}(\mathbb{R}^n)$.
\end{prop}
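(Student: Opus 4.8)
The plan is to establish this classical bound by the method of \emph{stationary phase}, after localizing the integral to small coordinate patches on which $\Sigma$ is a graph. Since $\psi$ has compact support and $\Sigma$ is a smooth hypersurface, I would first introduce a smooth partition of unity subordinate to a finite cover of $\mathrm{supp}(\psi)\cap\Sigma$ by coordinate neighbourhoods; it then suffices to bound each localized piece by $O(|\xi|^{-(n-1)/2})$ uniformly and sum the finitely many contributions.

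On a single patch, after a rotation of coordinates, I would write $\Sigma$ as the graph $x_n=\phi(x')$ with $x'=(x_1,\dots,x_{n-1})$ ranging over an open set $U\subset\mathbb{R}^{n-1}$ and $\phi\in C^\infty(U)$. The surface measure becomes $d\sigma=\sqrt{1+|\nabla\phi|^2}\,dx'$, so the localized integral takes the form
\[
I(\xi)=\int_U e^{i\Phi(x')}a(x')\,dx',
\qquad
\Phi(x')=\xi'\cdot x'+\xi_n\,\phi(x'),
\]
where $a\in C_0^\infty(U)$ absorbs $\psi$ together with the Jacobian factor, and $\xi=(\xi',\xi_n)$.

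The critical points of the phase satisfy $\nabla\Phi(x')=\xi'+\xi_n\nabla\phi(x')=0$. Writing $\xi=|\xi|\omega$ with $\omega\in\mathcal{S}^{n-1}$, I would split into two regimes. When $\omega$ stays away from the normal directions of the patch, $|\nabla\Phi|\gtrsim|\xi|$ throughout $U$, and repeated integration by parts against the non-stationary phase yields $I(\xi)=O(|\xi|^{-K})$ for every $K$. When $\omega$ lies near a normal direction, there is (for $U$ small) a unique critical point $x_0'$, and the nondegenerate stationary-phase asymptotic gives
\[
I(\xi)=\left(\frac{2\pi}{|\xi_n|}\right)^{(n-1)/2}
\frac{e^{i\Phi(x_0')}\,e^{i\pi\,\mathrm{sgn}(\mathrm{Hess}\,\Phi)/4}}
{\sqrt{|\det\mathrm{Hess}\,\phi(x_0')|}}\,a(x_0')
+O(|\xi|^{-(n+1)/2}).
\]
The decisive algebraic point is that $\mathrm{Hess}\,\Phi=\xi_n\,\mathrm{Hess}\,\phi$, and the Gauss--Kronecker curvature of a graph equals $K=\det(\mathrm{Hess}\,\phi)/(1+|\nabla\phi|^2)^{(n+1)/2}$; hence the hypothesis $K\neq 0$ forces $\det\mathrm{Hess}\,\phi(x_0')\neq 0$, so the critical point is nondegenerate and the expansion is valid, producing exactly the exponent $(n-1)/2$ via $|\xi_n|\asymp|\xi|$.

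The main obstacle is \emph{uniformity} in the direction $\omega$, namely controlling the implied constant as $\omega$ passes from the non-stationary range into the range containing a critical point. This I would handle by a dyadic decomposition in $|\nabla\Phi|$ (equivalently in the distance of $\omega$ to the normal cone), combined with the quantitative van der Corput / nondegenerate stationary-phase lemma, whose bounds depend only on a uniform lower bound for $|\det\mathrm{Hess}\,\phi|$ and on finitely many derivatives of $a$ and $\phi$. Since the relevant portion of $\Sigma$ is compact and $K$ is bounded away from $0$ there, all these quantities are uniformly controlled, and summing over the finitely many patches completes the proof.
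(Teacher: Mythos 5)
The paper does not prove this statement at all: it is quoted verbatim as a classical result from Stein (the decay of the Fourier transform of a surface-carried measure on a hypersurface with non-vanishing Gaussian curvature), with the remark that only the case $n=3$ is needed. So there is no ``paper proof'' to compare against; your proposal is a reconstruction of the standard argument, and it is essentially the one found in Stein's treatment. The outline is correct: partition of unity, graph coordinates $x_n=\phi(x')$ with $d\sigma=\sqrt{1+|\nabla\phi|^2}\,dx'$, phase $\Phi(x')=\xi'\cdot x'+\xi_n\phi(x')$, rapid decay by non-stationary phase when the direction $\omega=\xi/|\xi|$ stays away from the normals of the patch, and non-degenerate stationary phase near a normal direction, where the identity $\mathrm{Hess}\,\Phi=\xi_n\,\mathrm{Hess}\,\phi$ together with $K=\det(\mathrm{Hess}\,\phi)/(1+|\nabla\phi|^2)^{(n+1)/2}\neq 0$ gives the non-degeneracy and the exponent $(n-1)/2$. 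You correctly identify uniformity in $\omega$ as the only delicate point; your dyadic decomposition works, though the more economical standard device is simply to cover the compact sphere of directions by two open sets --- one on which $|\nabla\Phi|\gtrsim|\xi|$ uniformly on $\mathrm{supp}\,a$, one on which the quantitative stationary-phase lemma applies with constants depending only on a uniform lower bound for $|\det\mathrm{Hess}\,\phi|$ and finitely many derivatives of $a$ and $\phi$ --- and invoke compactness. One small point to make airtight: when the critical point $x_0'$ lies near the boundary of the patch (the transition between your two regimes), neither clean alternative applies verbatim, which is exactly why one shrinks the patches and enlarges the two direction-sets so that they overlap; with that said, the argument is complete and correct.
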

The case $n=2$ of Proposition \ref{stein} was proven by Van der Corput \cite[Proposition 2]{stein1} (see also \cite[Lemma 5.2]{rudwig}). Here we will need the case $n=3$ i.e., as $|\xi|\to\infty$,
\begin{equation*}
\int_\Sigma e^{ix\cdot\xi}\psi(x)d\sigma(x)=O(|\xi|^{-1}).
\end{equation*}

\subsection{Proof of Lemma \ref{2momr}}
\label{secpf2momr}
We square the covariance function \eqref{rF}
\begin{equation*}
r^2(\sigma,\sigma')
=
\frac{1}{N^2}\sum_{\mu,\mu'} e^{2\pi i\langle\mu-\mu',\sigma-\sigma'\rangle}
\end{equation*}
and integrate it over $\Sigma^2$: the diagonal terms equal
\begin{equation*}
\frac{1}{N^2}\iint_{\Sigma^2}\sum_{\mu}1d\sigma d\sigma'
=\frac{A^2}{N}.
\end{equation*}
We bound the off-diagonal terms by applying Proposition \ref{stein}
\begin{align*}
\frac{1}{N^2}\iint_{\Sigma^2}
\sum_{\mu\neq\mu'} e^{2\pi i\langle\mu-\mu',\sigma-\sigma'\rangle}d\sigma d\sigma'
&=\frac{1}{N^2}\sum_{\mu\neq\mu'}\left|\int_{\Sigma}e^{2\pi i\langle\mu-\mu',\sigma\rangle}d\sigma\right|^2
\\&\ll_\Sigma\frac{1}{N^2}\sum_{\mu\neq\mu'}\frac{1}{|\mu-\mu'|^2},
\end{align*}
followed by Proposition \ref{asyriesz} with $s=2-\epsilon$:
\begin{equation*}
\sum_{\mu\neq\mu'}\frac{1}{|\mu-\mu'|^2}\ll\frac{N^2}{(\sqrt{m})^{2-\epsilon}}.
\end{equation*}
This completes the proof of \eqref{R2}.

Next, we show \eqref{trX}, \eqref{trX'} being similar. The proof of the following preliminary lemma is deferred to Section \ref{sec4mom}.
\begin{lemma}
\label{prelim}
With $D,H$ as in Definition \ref{defdh} and $\Omega$ as in \eqref{omegafull}, we have the following estimates:
\begin{align}
&-\iint_{\Sigma^2}\frac{1}{M}D\Omega D^Td\sigma d\sigma'=-\frac{2A^2}{N}+O\left(\frac{1}{m^{1-\epsilon}}\right);\label{e1}
\\&\iint_{\Sigma^2}\frac{r^2}{M}D\Omega D^Td\sigma d\sigma'=O\left(\frac{1}{m^{11/16-\epsilon}}\right);\label{e2}
\\&\iint_{\Sigma^2}\frac{1}{M^2}tr\left(H\Omega H\Omega'\right)d\sigma d\sigma'=\frac{3}{N}(A^2+3\mathcal{H})+O\left(\frac{1}{m^{1-\epsilon}}\right);\label{e3}
\\&\iint_{\Sigma^2}\frac{r^2}{M^2}D\Omega H\Omega'D^Td\sigma d\sigma'=O\left(\frac{1}{m^{11/16-\epsilon}}\right).\label{e4}
\end{align}
\end{lemma}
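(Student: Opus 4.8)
The plan is to prove each of the four estimates by expanding the relevant quantity into a sum over the lattice set $\mathcal{E}$, integrating over $\Sigma^2$, and splitting into a \emph{diagonal} part (where the total frequency vanishes), which produces the main term, and an \emph{off-diagonal} part, which I will show is an error. The common engine is that each of $D$ and $H$ (Definition \ref{defdh}) carries a single frequency $e^{2\pi i\langle\mu,\sigma-\sigma'\rangle}$, so that $D\Omega D^T$, $tr(H\Omega H\Omega')$, $r^2 D\Omega D^T$ and $r^2 D\Omega H\Omega' D^T$ become, respectively, double, double, quadruple and quintuple sums over $\mathcal{E}$, with smooth amplitudes built from $\Omega,\Omega'$ (recall $\Omega=I-\overrightarrow{n}^T\overrightarrow{n}$ by \eqref{omegafull}) and the lattice coordinates. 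Since $\Omega(\sigma)$ and $\Omega'(\sigma')$ depend on $\sigma$ and on $\sigma'$ separately, each $\Sigma^2$-integral factorises into a product of two integrals over $\Sigma$; on the off-diagonal, where the total frequency $v$ is nonzero, Proposition \ref{stein} in dimension $3$ (using that $\Sigma$ has nonvanishing Gauss--Kronecker curvature, so the smooth amplitudes are admissible) bounds each factor by $|v|^{-1}$ times the amplitude, hence the whole integral by $|v|^{-2}$ times the amplitude.

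For \eqref{e1} and \eqref{e3} the diagonal is the set $\mu'=-\mu$. In \eqref{e1} the amplitude collapses via $\sum_{\mu}\mu^{(i)}\mu^{(j)}=\tfrac{Nm}{3}\delta_{ij}$ together with $tr(\Omega)\equiv 2$ (as $\overrightarrow{n}$ is a unit vector), giving exactly $-2A^2/N$ after division by $M$. In \eqref{e3} I would use $\mu\Omega(\sigma)\mu^T=|\mu|^2-\langle\mu,\overrightarrow{n}(\sigma)\rangle^2=m-\langle\mu,\overrightarrow{n}(\sigma)\rangle^2$; writing $\mu=\sqrt{m}\,\hat{\mu}$ and expanding the square, the three resulting sums are $NA^2$, then $\sum_{\mu}\int_\Sigma\langle\hat\mu,\overrightarrow{n}(\sigma)\rangle^2 d\sigma=\tfrac{NA}{3}$ (since $\sum_\mu\langle\hat\mu,\overrightarrow{n}\rangle^2=N/3$), and finally $\sum_{\mu}\left(\int_\Sigma\langle\hat\mu,\overrightarrow{n}(\sigma)\rangle^2 d\sigma\right)^2=N\mathcal{H}$ by Fubini and the very definition \eqref{H} of $\mathcal{H}$; these assemble into $\tfrac{3}{N}(A^2+3\mathcal{H})$. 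In both cases the off-diagonal reduces to a prefactor times $\sum_{\mu\neq\mu'}|\mu-\mu'|^{-2}$; bounding $|\mu-\mu'|^{-2}\leq|\mu-\mu'|^{-2+\epsilon}$ (lattice points are at distance $\geq 1$) and applying Proposition \ref{asyriesz} with $s=2-\epsilon$ after rescaling to the unit sphere gives $\sum_{\mu\neq\mu'}|\mu-\mu'|^{-2}\ll N^2 m^{-1+\epsilon}$, so both off-diagonals are $O(m^{-1+\epsilon})$.

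For \eqref{e2} and \eqref{e4} the factor $r^2$ contributes two further frequencies, so the diagonal is now the vanishing of a sum of four (resp.\ five) lattice points, i.e.\ a spectral correlation. Bounding the amplitude trivially by $\ll m$ (resp.\ $\ll m^2$) and the number of zero-sum tuples by $|\mathcal{C}(4)|\ll N^{2+\epsilon}$ from \eqref{C4bd} (resp.\ by the crude count $\ll N^{3+\epsilon}$ of quintuples summing to zero, obtained by choosing three points freely and noting the remaining pair lies on the intersection of two spheres, a circle, contributing $\ll\kappa(\sqrt{m})\ll m^\epsilon$ choices), both diagonals come out $O(m^{-1+\epsilon})$. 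The off-diagonals reduce, after dividing out amplitudes and prefactors, to $\frac{1}{N^4}\sum^{(4)}_{v\neq 0}|v|^{-2}$ and $\frac{1}{N^5}\sum^{(5)}_{v\neq 0}|v|^{-2}$, where the superscript records the number of lattice points entering $v$. The four-fold sum is exactly Lemma \ref{4mom4corr}, giving $\ll N^{2+5/8+\epsilon}$ and hence $O(m^{-11/16+\epsilon})$ for \eqref{e2}.

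The main obstacle is the five-fold correlation sum in \eqref{e4}, to which Lemma \ref{4mom4corr} does not directly apply. I would establish $\sum^{(5)}_{v\neq 0}|v|^{-2}\ll N^{29/8+\epsilon}$ by repeating the three-range dissection of Lemma \ref{4mom4corr} with one extra free point: for $1\leq|v|\leq A$ fix three of the five points and bound the remaining pair by $\kappa(\sqrt{m})\ll m^\epsilon$, giving $\ll N^{3+\epsilon}A$; for $A\leq|v|\leq B$ fix four points and count the last on a spherical cap via Lemma \ref{lemmachi}, giving $\ll\frac{N^4}{A^2}m^\epsilon(1+B^2 m^{-1/4})$; and for $|v|\geq B$ use $\ll N^5/B^2$. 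The same optimal choice $(A,B)=(N^{5/8},N^{11/16})$ as in Lemma \ref{4mom4corr} balances the three terms at $N^{29/8+\epsilon}$, whence $\frac{1}{N^5}\sum^{(5)}_{v\neq 0}|v|^{-2}\ll N^{-11/8+\epsilon}\asymp m^{-11/16+\epsilon}$ by \eqref{totnumlp3}. Tracking the $\Sigma$-dependence throughout (through the smooth amplitudes $\mu\Omega(\sigma)\mu'^T$ and the curvature lower bound underlying Proposition \ref{stein}) then delivers all four estimates.
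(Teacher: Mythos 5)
Your proposal is correct and, for \eqref{e1}--\eqref{e3}, follows the paper's proof essentially verbatim: the same diagonal/off-diagonal split, the same symmetry identities $\sum_\mu\mu^{(i)}\mu^{(j)}=\tfrac{mN}{3}\delta_{ij}$ and $\mu\Omega\mu^T=m-\langle\mu,\overrightarrow{n}\rangle^2$ for the main terms, and the same combination of Proposition \ref{stein} with Proposition \ref{asyriesz} (for the double sums) or with \eqref{C4bd} and Lemma \ref{4mom4corr} (for the quadruple sums) for the errors. The one place you go beyond the paper is \eqref{e4}, whose proof the paper omits as ``very similar'': you correctly identify that $r^2D\Omega H\Omega'D^T$ is a \emph{quintuple} exponential sum not covered by Lemma \ref{4mom4corr}, and your extension of that lemma's three-range dissection to five points --- with the same parameters $(A,B)=(N^{5/8},N^{11/16})$ yielding $\sum^{(5)}_{v\neq 0}|v|^{-2}\ll N^{29/8+\epsilon}$ and hence $N^{-11/8+\epsilon}\ll m^{-11/16+\epsilon}$ after dividing by $N^5$ --- checks out and legitimately supplies the missing detail.
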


By Definition \ref{XYdef},
\begin{equation*}
X=X(\sigma,\sigma'):=-\frac{1}{(1-r^2)M}QL^T\Omega D^TD\Omega LQ,
\end{equation*}
where the matrices $D,L$ are as in Definition \ref{defdh}, $\Omega$ as in \eqref{omegafull}, and $Q$ as in \eqref{Q}. We separate the domain of integration into the singular set $S$ of Definition \ref{singset} and its complement:
\begin{equation}
\label{s+ns}
\iint_{\Sigma^2}tr(X)d\sigma d\sigma'=\iint_{\Sigma^2\setminus S}tr(X)d\sigma d\sigma'+O(\text{meas}(S)),
\end{equation}
where we used the uniform boundedness of $X$ given by Lemma \ref{XY}. On $\Sigma^2\setminus S$ the covariance function $r$ is bounded away from $\pm 1$, so that
\begin{equation}
\label{trXa}
trX
=-\frac{1}{M}tr(QL^T\Omega D^TD\Omega LQ)
+O\left(tr\left(\frac{r^2}{M}QL^T\Omega D^TD\Omega LQ\right)\right).
\end{equation}
We have
\begin{equation}
\label{trXasy1}
tr(QL^T\Omega D^TD\Omega LQ)=D\Omega LQQL^T\Omega D^T=D\Omega D^T,
\end{equation}
where in the last equality we noted that 
\begin{equation}
\label{id}
LQ^2L^T\Omega=I_3
\end{equation}
by \eqref{Q^2}. Inserting \eqref{trXasy1} into \eqref{trXa} and integrating over $\Sigma^2\setminus S$,
\begin{multline}
\label{into}
\iint_{\Sigma^2\setminus S}tr(X)d\sigma d\sigma'
=-\frac{1}{M}\iint_{\Sigma^2\setminus S}D\Omega D^Td\sigma d\sigma'
\\+O\left(\iint_{\Sigma^2\setminus S}\frac{r^2}{M}D\Omega D^Td\sigma d\sigma'\right)
.
\end{multline}
By Definition \ref{defdh},
\begin{equation}
\label{DID}
D\Omega D^T
=-\frac{4\pi^2}{N^2}\sum_{\mu,\mu'} e^{2\pi i\langle\mu-\mu',\sigma-\sigma'\rangle}\mu\Omega\mu'^T.
\end{equation}
The uniform bound
\begin{equation}
\label{unibd}
|\mu^{(i)}|\leq\sqrt{m} \qquad i=1,2,3
\end{equation}
implies $D\Omega D^T\ll M$. We may then rewrite the main term of \eqref{into} as
\begin{equation}
\label{into2}
-\frac{1}{M}\iint_{\Sigma^2\setminus S}D\Omega D^Td\sigma d\sigma'
=-\frac{1}{M}\iint_{\Sigma^2}D\Omega D^Td\sigma d\sigma'
+O\left(\text{meas}(S)\right).
\end{equation}
We substitute \eqref{into2} into \eqref{into}, and then \eqref{into} into \eqref{s+ns} to obtain
\begin{multline}
\label{supernew}
\iint_{\Sigma^2}tr(X)d\sigma d\sigma'=-\frac{1}{M}\iint_{\Sigma^2}D\Omega D^Td\sigma d\sigma'+O\left(\iint_{\Sigma^2}\frac{r^2}{M}D\Omega D^Td\sigma d\sigma'\right)\\+O(\text{meas}(S)).
\end{multline}
Inserting the estimates \eqref{e1} and \eqref{e2} into \eqref{supernew}, and bearing in mind Lemmas \ref{Sbound} and \ref{R4} concludes the proof of \eqref{trX}, \eqref{trX'} being similar.

\subsection{Proof of Lemma \ref{2der}}
\label{secpf2der}
Similarly to the proof of Lemma \ref{2momr}, we write
\begin{equation}
\label{ys+ns}
\iint_{\Sigma^2}tr(Y'Y)d\sigma d\sigma'=\iint_{\Sigma^2\setminus S}tr(Y'Y)d\sigma d\sigma'+O(\text{meas}(S)).
\end{equation}
By Definition \ref{XYdef} and \eqref{id}, on $\Sigma^2\setminus S$ we write
\begin{multline}
\label{Y'Y}
Y'Y(\sigma,\sigma')=\frac{1}{M}\left[Q'L^T\Omega'\left(H+\frac{r}{1-r^2}D^TD\right)\Omega LQ\right]\\\cdot\frac{1}{M}\left[QL^T\Omega\left(H+\frac{r}{1-r^2}D^TD\right)\Omega'LQ'\right]
\\=\frac{1}{M^2}tr\left(H\Omega H\Omega'\right)
+O\left(\frac{r}{M^2}D\Omega H\Omega'D^T\right).
\end{multline}
We insert \eqref{Y'Y} into \eqref{ys+ns} and use the bound $tr(H\Omega H\Omega')\ll M$ (recall \eqref{unibd}) together with Lemmas \ref{Sbound} and \ref{R4} to obtain 
\begin{multline}
\label{new2}
\iint_{\Sigma^2}tr(Y'Y)d\sigma d\sigma'=\iint_{\Sigma^2}\frac{1}{M^2}tr\left(H\Omega H\Omega'\right)d\sigma d\sigma'
\\+O\left(\iint_{\Sigma^2}\frac{r^2}{M^2}D\Omega H\Omega'D^Td\sigma d\sigma'\right)
+O\left(\frac{1}{m^{11/16-\epsilon}}\right).
\end{multline}
Substituting \eqref{e3} and \eqref{e4} into \eqref{new2} yields \eqref{trY'Y}, concluding the proof of Lemma \ref{2der}.

\section{Fourth moment of $r$: proofs of Lemmas \ref{R4}, \ref{prelim}, and \ref{X2Y4}}
\label{sec4mom}
\begin{proof}[Proof of Lemma \ref{R4}]
We write the fourth power of $r$ \eqref{rF}
\begin{equation*}
r^4(\sigma,\sigma')
=
\frac{1}{N^4}\sum_{\mathcal{E}^4} e^{2\pi i\langle\mu_1+\mu_2+\mu_3+\mu_4,\sigma-\sigma'\rangle}
\end{equation*}
and substitute it into \eqref{Rkeq} with $k=4$ to obtain
\begin{equation*}
\mathcal{R}_4(m)=\frac{1}{N^4}
\sum_{\mathcal{E}^4}\iint_{\Sigma^2}e^{2\pi i\langle\sum\mu_j,\sigma-\sigma'\rangle}d\sigma d\sigma'
\end{equation*}
(we abbreviate $\sum\mu_j:=\mu_1+\mu_2+\mu_3+\mu_4$). We separate the summation over $\mathcal{E}^4$ into the set \eqref{C4} of $4$-spectral correlations $\mathcal{C}(4)$ and its complement.

The summation over the $4$-correlations is
\begin{equation*}
\sum_{\mathcal{C}(4)}\iint_{\Sigma^2}d\sigma d\sigma'
\ll_{\Sigma}|\mathcal{C}(4)|\ll N^{2+\epsilon},
\end{equation*}
where we applied \eqref{C4bd}. By Proposition \ref{stein}, the remaining summation is bounded by
\begin{equation*}
\sum_{\mathcal{E}^4\setminus\mathcal{C}(4)}\iint_{\Sigma^2}e^{2\pi i\langle\sum\mu_j,\sigma-\sigma'\rangle}d\sigma d\sigma'
\\\ll\sum_{\mathcal{E}^4\setminus\mathcal{C}(4)}\frac{1}{|\mu_1+\mu_2+\mu_3+\mu_4|^2}.
\end{equation*}
By Lemma \ref{4mom4corr}, the latter summation has the upper bound
$N^{2+5/8+\epsilon}$. It follows that
\begin{equation*}
\mathcal{R}_4(m)
\ll\frac{1}{N^4}(N^{2+\epsilon}+N^{2+5/8+\epsilon})\ll\frac{1}{m^{11/16-\epsilon}},
\end{equation*}
where we applied \eqref{totnumlp3}.
\end{proof}

\begin{proof}[Proof of Lemma \ref{prelim}]
One clearly has $\sum_{\mu}|\mu|^2=mN$, hence
\begin{equation}
\label{clearly}
\sum_{\mu}(\mu^{(i)})^2=\frac{mN}{3} \qquad
\text{ for } i=1,2,3.
\end{equation}
By the symmetry of the lattice points, one also has $\sum_{\mu}\mu^{(i)}\mu^{(j)}=0$ for any $i\neq j$. Bearing this in mind, we directly compute the diagonal terms $\mu=\mu'$ of \eqref{DID} to equal
\begin{equation*}
\frac{4\pi^2}{N^2}
\sum_{\mu}[(1-n_1^2)(\mu^{(1)})^2+(1-n_2^2)(\mu^{(2)})^2+(1-n_3^2)(\mu^{(3)})^2]
=\frac{2M}{N},
\end{equation*}
giving a contribution of
\begin{equation}
\label{cont1}
-\frac{1}{M}\iint_{\Sigma^2}\frac{2M}{N}d\sigma d\sigma'=-\frac{2A^2}{N}
\end{equation}
to \eqref{e1}. To control the off-diagonal terms $\mu\neq\mu'$ of $D\Omega D^T$, we start by applying Proposition \ref{stein} to the integrals
\begin{equation*}
\int_{\Sigma}\left(\frac{\mu}{|\mu|}\Omega\frac{\mu'^T}{|\mu'|}(\sigma)\right)e^{2\pi i\langle\mu-\mu',\sigma\rangle}d\sigma\ll_\Sigma\frac{1}{|\mu-\mu'|}
\end{equation*}
and
\begin{equation*}
\int_{\Sigma}e^{2\pi i\langle\mu'-\mu,\sigma'\rangle}d\sigma'
\\\ll_\Sigma\frac{1}{|\mu-\mu'|},
\end{equation*}
obtaining the bound
\begin{multline}
\label{cont2}
\sum_{\mu\neq\mu'}\int_{\Sigma}
\frac{4\pi^2}{N^2}\left(\frac{\mu}{|\mu|}\Omega\frac{\mu'^T}{|\mu'|}(\sigma)\right)e^{2\pi i\langle\mu-\mu',\sigma\rangle}d\sigma\int_{\Sigma}e^{2\pi i\langle\mu'-\mu,\sigma'\rangle}d\sigma'
\\\ll_\Sigma\frac{1}{N^2}\sum_{\mu\neq\mu'}\frac{1}{|\mu-\mu'|^2}\ll\frac{1}{(\sqrt{m})^{2-\epsilon}},
\end{multline}
where we also applied Proposition \ref{asyriesz}. Consolidating the estimates \eqref{cont1} and \eqref{cont2} completes the proof of \eqref{e1}. 

We now show \eqref{e2}. To treat the diagonal terms in $r^2D\Omega D^T$, we use \eqref{C4bd} and the triangle inequality to obtain
\begin{equation}
\label{cont3}
\iint_{\Sigma^2}\frac{1}{MN^4}\sum_{\mathcal{C}(4)}e^{2\pi i\langle\sum\mu_j,\sigma-\sigma'\rangle}\mu_3\Omega\mu_4^T
d\sigma d\sigma'
\ll\frac{1}{N^{2-\epsilon}}
\end{equation}
(recall the abbreviation $\sum\mu_j:=\mu_1+\mu_2+\mu_3+\mu_4$). Next, we consider the off-diagonal terms of $r^2D\Omega D^T$. Invoking Proposition \ref{stein} again,
\begin{multline}
\label{cont4}
\sum_{\mathcal{E}^4\setminus\mathcal{C}(4)}\int_{\Sigma}
\frac{1}{N^4}\left(\frac{\mu_3}{|\mu_3|}\Omega\frac{\mu_4^T}{|\mu_4|}(\sigma)\right)e^{2\pi i\langle\sum\mu_j,\sigma\rangle}d\sigma\int_{\Sigma}e^{2\pi i\langle-\sum\mu_j,\sigma'\rangle}d\sigma'
\\\ll_\Sigma\frac{1}{N^4}\sum_{\mathcal{E}^4\setminus\mathcal{C}(4)}\frac{1}{|\sum\mu_j|^2}\ll\frac{1}{m^{11/16-\epsilon}}
\end{multline}
having used Lemma \ref{4mom4corr} in the last inequality. Consolidating the estimates \eqref{cont3} and \eqref{cont4} completes the proof of \eqref{e2}.

To prove \eqref{e3}, we start with Definition \ref{defdh},
\begin{equation*}
tr\left(H\Omega H\Omega'\right)
=\frac{(4\pi^2)^2}{N^2}\sum_{\mu,\mu'}e^{2\pi i\langle\mu-\mu',\sigma-\sigma'\rangle}\mu\Omega\mu'^T\mu'\Omega'\mu^T.
\end{equation*}
One directly calculates the diagonal terms of the latter expression to equal
\begin{align}
\label{10}
&\notag\frac{(4\pi^2)^2}{N^2}\sum_{\mu}(m-\langle\mu,n\rangle^2)\cdot(m-\langle\mu,n'\rangle^2)
\\\notag&=\frac{(4\pi^2)^2}{N^2}\left[m^2N-m\sum_{\mu}\langle\mu,n\rangle^2-m\sum_{\mu}\langle\mu,n'\rangle^2+\sum_{\mu}\langle\mu,n\rangle^2\langle\mu,n'\rangle^2\right]
\\&=\frac{(4\pi^2)^2}{N^2}\left[\frac{m^2N}{3}+\sum_{\mu}\langle\mu,n\rangle^2\langle\mu,n'\rangle^2\right]
\end{align}
having used \eqref{clearly} in the last step. We control the contribution of the off-diagonal terms of $tr\left(H\Omega H\Omega'\right)$ via Propositions \ref{stein} and \ref{asyriesz}:
\begin{multline}
\label{11}
\frac{(4\pi^2)^2}{N^2}\sum_{\mu\neq\mu'}\left|\int_{\Sigma}\frac{\mu}{|\mu|}\Omega\frac{\mu'^T}{|\mu'|}e^{2\pi i\langle\mu-\mu',\sigma\rangle}d\sigma\right|^2
\ll_\Sigma\frac{1}{N^2}\sum_{\mu\neq\mu'}\frac{1}{|\mu-\mu'|^2}
\\\ll\frac{1}{(\sqrt{m})^{2-\epsilon}}.
\end{multline}
Substituting \eqref{10} and \eqref{11} into \eqref{new2} and recalling the definition of $\mathcal{H}$ \eqref{H} yields \eqref{e3}. The computation for \eqref{e4} is very similar to the preceding ones and we omit it here.
\end{proof}

\begin{proof}[Proof of Lemma \ref{X2Y4}]
We will prove in detail a few of the bounds, the remaining being similar. By squaring $X$ (Definition \ref{XYdef}) we obtain 
\begin{equation*}
X^2=\left[-
\frac{1}{(1-r^2)M}\right]^2\cdot\left(QL^T\Omega D^TD\Omega LQ\right)^2.
\end{equation*}
Therefore, outside the singular set $S$ (Definition \ref{singset}),
\begin{equation}
\label{X2}
tr(X^2)
\ll\frac{1}{M^2}(D\Omega D^T)^2
\end{equation}
where we used \eqref{id}. We may thus write
\begin{equation}
\label{trX2}
\iint_{\Sigma^2}tr(X^2)d\sigma d\sigma'=O\left(\iint_{\Sigma^2}\frac{1}{M^2}(D\Omega D^T)^2d\sigma d\sigma'\right)+O(\text{meas}(S)).
\end{equation}
The diagonal terms of the integral on the RHS in \eqref{trX2} give a contribution of
\begin{equation}
\label{cont5}
\iint_{\Sigma^2}\frac{1}{M^2N^4}\sum_{\mathcal{C}(4)}\mu_3\Omega\mu_4^T\mu_4\Omega'\mu_3^T
d\sigma d\sigma'
\ll\frac{1}{N^{2-\epsilon}}
\end{equation}
via \eqref{C4bd}. The off-diagonal terms are bounded via Proposition \ref{stein} and Lemma \ref{4mom4corr}:
\begin{multline}
\label{cont6}
\frac{1}{N^4}\sum_{\mathcal{E}^4\setminus\mathcal{C}(4)}\left|\int_{\Sigma}
\left(\frac{\mu_3}{|\mu_3|}\Omega\frac{\mu_4^T}{|\mu_4|}(\sigma)\right)e^{2\pi i\langle\sum\mu_j,\sigma\rangle}d\sigma\right|^2
\\\ll_\Sigma\frac{1}{N^4}\sum_{\mathcal{E}^4\setminus\mathcal{C}(4)}\frac{1}{|\sum\mu_j|^2}\ll\frac{1}{m^{11/16-\epsilon}}.
\end{multline}
Inserting the bounds \eqref{cont5} and \eqref{cont6} into \eqref{trX2} (and bearing in mind Lemmas \ref{Sbound} and \ref{R4}) yields the desired estimate for $tr(X^2)$. The one for $tr(X'^2)$ is proven in a similar way.

Let us now prove the bound for $tr(Y^4)$, the one for $tr(Y'^4)$ being similar. By Definition \ref{XYdef},
\begin{equation*}
Y=-\frac{1}{M}\left[QL^T\Omega\left(H+\frac{r}{1-r^2}D^TD\right)\Omega'LQ'\right]
\end{equation*}
hence on $\Sigma^2\setminus S$
\begin{equation}
\label{Y4}
tr(Y^4)\ll\frac{1}{M^4}tr(H\Omega H\Omega')^2
\end{equation}
via \eqref{id}. Therefore,
\begin{equation}
\label{trY4}
\iint_{\Sigma^2}tr(Y^4)d\sigma d\sigma'=O\left(\iint_{\Sigma^2}\frac{1}{M^4}tr(H\Omega H\Omega')^2d\sigma d\sigma'\right)+O(\text{meas}(S)).
\end{equation}
Similarly to the case of $tr(X^2)$, the diagonal contribution to \eqref{trY4} is $O(N^{-2+\epsilon})$, whereas the off-diagonal terms are bounded invoking Proposition \ref{stein} and Lemma \ref{4mom4corr} again:
\begin{multline*}
\frac{1}{N^4}\sum_{\mathcal{E}^4\setminus\mathcal{C}(4)}\left|\int_{\Sigma}
\frac{\mu_1}{|\mu_1|}\Omega\frac{\mu_2^T}{|\mu_2|}\frac{\mu_3}{|\mu_3|}\Omega\frac{\mu_4^T}{|\mu_4|}e^{2\pi i\langle\sum\mu_j,\sigma\rangle}d\sigma\right|^2
\\\ll_\Sigma\frac{1}{N^4}\sum_{\mathcal{E}^4\setminus\mathcal{C}(4)}\frac{1}{|\sum\mu_j|^2}\ll\frac{1}{m^{11/16-\epsilon}}.
\end{multline*}

Finally, we show the bound for $r^2 tr(X)$, the ones for $r^2 tr(X')$ and $r^2 tr(Y'Y)$ being similar. As in the previous computations, one writes
\begin{equation*}
\iint_{\Sigma^2}r^2tr(X)d\sigma d\sigma'=O\left(\iint_{\Sigma^2}\frac{r^2}{M}D\Omega D^Td\sigma d\sigma'\right)+O(\text{meas}(S)).
\end{equation*}
The required result now follows from \eqref{e2} and Lemmas \ref{Sbound} and \ref{R4}.
\end{proof}

\section{Study of the variance leading constant: proof of Proposition \ref{Ibds}}
\label{secleadconst}
Recall $A$ is the area of the surface $\Sigma$, $\overrightarrow{n}(\sigma)$ the unit normal at the point $\sigma\in\Sigma$, and $\mathcal{I}$ the integral \eqref{I}
\begin{equation*}
\mathcal{I}=\iint_{\Sigma^2}\langle\overrightarrow{n}(\sigma),\overrightarrow{n}(\sigma')\rangle^2d\sigma d\sigma'.
\end{equation*}
The goal of this section is to prove Proposition \ref{Ibds}, i.e., that the sharp bounds
\begin{equation*}
\frac{A^2}{3}\leq\mathcal{I}\leq A^2
\end{equation*}
hold.
\begin{lemma}
\label{Iupbd}
We have
\begin{equation*}
\mathcal{I}\leq A^2,
\end{equation*}
with equality if and only if $\Sigma$ is contained in a plane.
\end{lemma}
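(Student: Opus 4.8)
The plan is to read $\mathcal{I}\leq A^2$ as a pointwise Cauchy--Schwarz bound integrated over $\Sigma^2$, and then to extract the equality case from the equality case of Cauchy--Schwarz combined with a first-integral argument for the height function.

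First I would note that for every $\sigma,\sigma'\in\Sigma$ the normals $\overrightarrow{n}(\sigma)$ and $\overrightarrow{n}(\sigma')$ are unit vectors, so Cauchy--Schwarz gives
\begin{equation*}
\langle\overrightarrow{n}(\sigma),\overrightarrow{n}(\sigma')\rangle^2\leq|\overrightarrow{n}(\sigma)|^2\,|\overrightarrow{n}(\sigma')|^2=1 .
\end{equation*}
Integrating this bound over $\Sigma^2$ against the (positive) product area measure yields at once $\mathcal{I}\leq\iint_{\Sigma^2}1\,d\sigma d\sigma'=A^2$. For the equality analysis, observe that the integrand is continuous on $\Sigma^2$ and everywhere $\leq 1$; hence the nonnegative continuous function $1-\langle\overrightarrow{n}(\sigma),\overrightarrow{n}(\sigma')\rangle^2$ has vanishing integral precisely when it vanishes identically, i.e. $\mathcal{I}=A^2$ holds if and only if $\langle\overrightarrow{n}(\sigma),\overrightarrow{n}(\sigma')\rangle^2\equiv 1$ on $\Sigma^2$.

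By the equality case of Cauchy--Schwarz, this last condition forces $\overrightarrow{n}(\sigma)=\pm\overrightarrow{n}(\sigma')$ for every pair, i.e. the Gauss map of $\Sigma$ is constant up to sign. Fixing a value $\overrightarrow{n}_0$ of the normal, I would then examine the height function $\sigma\mapsto\langle\sigma,\overrightarrow{n}_0\rangle$: its surface gradient (Definition \ref{surfgrad}) is the tangential projection of $\overrightarrow{n}_0$ onto $T_\sigma\Sigma$, which vanishes identically since $\overrightarrow{n}_0=\pm\overrightarrow{n}(\sigma)$ is orthogonal to $T_\sigma\Sigma$ at every point. Thus the height function is locally constant, confining $\Sigma$ to a level set of $\langle\,\cdot\,,\overrightarrow{n}_0\rangle$, that is, to a plane orthogonal to $\overrightarrow{n}_0$. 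The converse is immediate: if $\Sigma$ lies in a plane, then $\overrightarrow{n}$ is constant, the integrand is identically $1$, and $\mathcal{I}=A^2$.

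The main obstacle is the passage from a constant Gauss map to containment in a plane, where care is needed with the sign ambiguity and with connectedness: continuity of $\overrightarrow{n}$ pins the sign down on each connected component, and the vanishing surface gradient of the height function then confines each component to a single level set orthogonal to $\overrightarrow{n}_0$ (for connected $\Sigma$, a single plane; in general, parallel planes sharing the normal $\overrightarrow{n}_0$).
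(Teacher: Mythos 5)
Your proof is correct and follows essentially the same route as the paper, which simply observes that the integrand $\langle\overrightarrow{n}(\sigma),\overrightarrow{n}(\sigma')\rangle^2\leq 1$ is maximised exactly when all normals are parallel. Your elaboration of the equality case (continuity forcing the integrand to be identically $1$, the height-function argument deducing planarity from a locally constant Gauss map, and the caveat that a disconnected $\Sigma$ could sit in several parallel planes) supplies details the paper's one-line proof leaves implicit, but it is a refinement rather than a different approach.
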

\begin{proof}
The integral $\mathcal{I}$ is maximised when
\begin{equation*}
\langle\overrightarrow{n}(\sigma),\overrightarrow{n}(\sigma')\rangle=\pm 1
\end{equation*}
for every $\sigma,\sigma'\in\Sigma$, i.e., when the normal vectors to the surface are all parallel.
\end{proof}

We now turn to establishing the lower bound $\mathcal{I}\geq A^2/3$. For any probability measure $\tau$ on $\mathcal{S}^2$ invariant by reflection w.r.t. the coordinate planes, define the number
\begin{equation*}
c(\tau,\Sigma):=\iint_{\Sigma^2}\int_{\mathcal{S}^{2}}\left\langle\theta,\overrightarrow{n}(\sigma)\right\rangle^2\cdot\left\langle\theta,\overrightarrow{n}(\sigma')\right\rangle^2d\sigma d\sigma'd\tau(\theta).
\end{equation*}
\begin{lemma}
\label{ctheta}
For any measure $\tau$ on $\mathcal{S}^2$ invariant by reflection w.r.t. the coordinate planes, we have
\begin{equation*}
\frac{A^2}{9}\leq c(\tau,\Sigma)\leq\frac{A^2}{3}.
\end{equation*}
\end{lemma}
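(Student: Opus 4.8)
The plan is to exploit the fact that the integrand defining $c(\tau,\Sigma)$ factorises over the two copies of $\Sigma$. Since $\langle\theta,\overrightarrow{n}(\sigma)\rangle^2\langle\theta,\overrightarrow{n}(\sigma')\rangle^2$ is a product of the same function evaluated at $\sigma$ and at $\sigma'$, I would first rewrite
\[
c(\tau,\Sigma)=\int_{\mathcal{S}^2}\left(\int_\Sigma\langle\theta,\overrightarrow{n}(\sigma)\rangle^2\,d\sigma\right)^2 d\tau(\theta)=\int_{\mathcal{S}^2}\psi(\theta)^2\,d\tau(\theta),
\]
where $\psi(\theta):=\int_\Sigma\langle\theta,\overrightarrow{n}(\sigma)\rangle^2\,d\sigma\ge 0$. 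This reduces the whole problem to estimating the second moment, against the probability measure $\tau$, of the single nonnegative function $\psi$.

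Next I would record two elementary facts about $\psi$. First, since $|\theta|=|\overrightarrow{n}(\sigma)|=1$, Cauchy--Schwarz gives the pointwise bound $0\le\langle\theta,\overrightarrow{n}(\sigma)\rangle^2\le 1$, and integrating over $\Sigma$ yields $0\le\psi(\theta)\le A$. Second, I would compute the first moment $\int_{\mathcal{S}^2}\psi\,d\tau$. By Fubini and the expansion $\langle\theta,\overrightarrow{n}(\sigma)\rangle^2=\sum_{i,j}n_i(\sigma)n_j(\sigma)\theta_i\theta_j$, the reflection invariance of $\tau$ annihilates every off-diagonal term, $\int_{\mathcal{S}^2}\theta_i\theta_j\,d\tau=0$ for $i\neq j$, while the symmetry forces the three diagonal moments $\int_{\mathcal{S}^2}\theta_i^2\,d\tau$ to coincide; combined with $\sum_i\theta_i^2\equiv 1$ on $\mathcal{S}^2$, this pins their common value at $\tfrac13$. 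Hence for each fixed $\sigma$ one gets $\int_{\mathcal{S}^2}\langle\theta,\overrightarrow{n}(\sigma)\rangle^2\,d\tau=\tfrac13\sum_i n_i(\sigma)^2=\tfrac13$, so that
\[
\int_{\mathcal{S}^2}\psi(\theta)\,d\tau(\theta)=\int_\Sigma\tfrac13\,d\sigma=\frac{A}{3}.
\]

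With these two ingredients the two-sided estimate falls out by a single inequality at each end. For the lower bound I would apply Cauchy--Schwarz (equivalently, Jensen, or nonnegativity of the variance) with respect to the probability measure $\tau$:
\[
c(\tau,\Sigma)=\int_{\mathcal{S}^2}\psi^2\,d\tau\ge\left(\int_{\mathcal{S}^2}\psi\,d\tau\right)^2=\frac{A^2}{9}.
\]
For the upper bound I would use the envelope $\psi\le A$ to linearise the square, $\psi^2\le A\psi$, and integrate:
\[
c(\tau,\Sigma)=\int_{\mathcal{S}^2}\psi^2\,d\tau\le A\int_{\mathcal{S}^2}\psi\,d\tau=A\cdot\frac{A}{3}=\frac{A^2}{3}.
\]

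The genuinely delicate point is the normalisation $\int_{\mathcal{S}^2}\theta_i^2\,d\tau=\tfrac13$ used to compute the first moment: this is where the invariance of $\tau$ enters decisively, since one needs the three coordinate second moments to be equal before $|\theta|=1$ can fix their value, and the bounds would genuinely fail for a measure concentrated on a single coordinate axis. I therefore expect this normalisation, rather than the two closing inequalities, to be the crux; everything else is the factorisation followed by two uses of Cauchy--Schwarz (once as Jensen for the lower bound, once through the crude bound $\psi\le A$ for the upper bound). In the application to Proposition \ref{Ibds} the lemma is invoked with $\tau$ the uniform measure on $\mathcal{S}^2$, for which $\int_{\mathcal{S}^2}\theta_i^2\,d\tau=\tfrac13$ holds immediately, and then combining the lower bound $c\ge A^2/9$ with the identity $\mathcal{H}=\tfrac{1}{15}(A^2+2\mathcal{I})$ yields $\mathcal{I}\ge A^2/3$.
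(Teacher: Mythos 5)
Your reduction to $c(\tau,\Sigma)=\int_{\mathcal{S}^2}\psi(\theta)^2\,d\tau(\theta)$ with $\psi=q(\cdot,\Sigma)$, the envelope $0\le\psi\le A$, and the two closing inequalities (Jensen for the lower bound, $\psi^2\le A\psi$ for the upper) are all correct, and this route is genuinely different from the paper's, which instead completes $\theta$ to an orthonormal triple $(\theta,\theta',\theta'')$, uses the pointwise identity $q(\theta,\Sigma)+q(\theta',\Sigma)+q(\theta'',\Sigma)=A$, folds the integral onto the first octant, and applies the AM--QM inequality. However, you have correctly located the crux and then gotten it wrong: invariance of $\tau$ under reflections in the three coordinate planes does kill the off-diagonal moments $\int\theta_i\theta_j\,d\tau$ for $i\ne j$, but it does \emph{not} force the three diagonal moments $\int\theta_i^2\,d\tau$ to coincide. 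The measure $\tau=\tfrac12\left(\delta_{(1,0,0)}+\delta_{(-1,0,0)}\right)$ is invariant under all three coordinate-plane reflections, yet its second moments are $(1,0,0)$. For this $\tau$ one finds $\int_{\mathcal{S}^2}\psi\,d\tau=\int_\Sigma n_1(\sigma)^2\,d\sigma$, which can be anywhere in $[0,A]$, and indeed $c(\tau,\Sigma)=\left(\int_\Sigma n_1^2\,d\sigma\right)^2$ sweeps out $[0,A^2]$ as $\Sigma$ varies. Your own parenthetical remark that ``the bounds would genuinely fail for a measure concentrated on a single coordinate axis'' describes a measure that \emph{does} satisfy the stated hypothesis, so you have in effect produced a counterexample to the lemma as literally stated rather than a caveat lying outside its scope.

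The repair is to strengthen the hypothesis to invariance under signed permutations of the coordinates (the full symmetry group of the cube): then the three diagonal moments are equal by the permutation symmetry, $\sum_i\theta_i^2\equiv1$ pins their common value at $\tfrac13$, your first-moment computation $\int\psi\,d\tau=A/3$ is restored, and the rest of your argument closes cleanly. This costs nothing in the application: the only measure to which the lemma is applied in the paper is the uniform measure on $\mathcal{S}^2$ (in the proof of Proposition \ref{Ibds}), and the lattice-point measures $\tau_m$ of \eqref{tau} are likewise invariant under the full hyperoctahedral group since $\mathcal{E}_m$ is closed under signed coordinate permutations. Note also that the paper's own proof implicitly needs more than bare reflection invariance at the step $3c(\tau,\Sigma)=8\int_{\mathfrak{R}}\left[q(\theta,\Sigma)^2+q(\theta',\Sigma)^2+q(\theta'',\Sigma)^2\right]d\tau(\theta)$ (which already fails for the atomic measure above, whose mass sits on the boundary of the octants), so the defect is in the stated hypothesis rather than in your strategy; but as written, the assertion ``the symmetry forces the three diagonal moments to coincide'' is the one false step in your proposal, and it is essential.
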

Lemma \ref{ctheta} will be proven in a moment; assuming it, we may complete the proof of Proposition \ref{Ibds}.

\begin{proof}[Proof of Proposition \ref{Ibds} assuming Lemma \ref{ctheta}]
One has
\begin{equation*}
c\left(\frac{\sin(\varphi)d\varphi d\psi}{4\pi},\Sigma\right)=\frac{A^2+2\mathcal{I}}{15}
\end{equation*}
hence via Lemma \ref{ctheta} we obtain the lower bound in \eqref{Ibounds}:
\begin{equation*}
\mathcal{I}\geq\frac{A^2}{3}.
\end{equation*}
The upper bound $\mathcal{I}\leq A^2$ in \eqref{Ibounds} has already been proven in Lemma \ref{Iupbd}.
\end{proof}

\begin{proof}[Proof of Lemma \ref{ctheta}]
We write
\begin{equation}
\label{AMQM2}
c(\tau,\Sigma)=\int_{\mathcal{S}^{2}}q(\theta,\Sigma)^2d\tau(\theta),
\end{equation}
where for $\theta\in\mathcal{S}^2$ we have defined
\begin{equation*}
q(\theta,\Sigma):=\int_\Sigma\langle\theta,\overrightarrow{n}(\sigma)\rangle^2d\sigma.
\end{equation*}
Let $\mathfrak{R}$ be the intersection of the unit sphere and the first octant
\begin{equation*}
\mathfrak{R}:=\{(x,y,z)\in\mathbb{R}^3 : x,y,z>0\}\cap\mathcal{S}^{2}.
\end{equation*}
We complete $\theta$ to an orthonormal basis $(\theta,\theta',\theta'')$ such that $\theta,\theta',\theta''$ lie in distinct octants, hence
\begin{equation}
\label{AMQM3}
3 c(\tau,\Sigma)
=8\int_\mathfrak{R}\left[q(\theta,\Sigma)^2+q(\theta',\Sigma)^2+q(\theta'',\Sigma)^2\right]d\tau(\theta)
\end{equation}
via \eqref{AMQM2} and the symmetries of $\tau$. As $\theta,\theta',\theta''\in\mathcal{S}^2$ are pairwise orthogonal, we have
\begin{equation*}
q(\theta,\Sigma)+q(\theta',\Sigma)+q(\theta'',\Sigma)=A
\end{equation*}
so that by Cauchy-Schwartz (or the AM-QM inequality)
\begin{equation}
\label{AMQM1}
\frac{A^2}{3}\leq q(\theta,\Sigma)^2+q(\theta',\Sigma)^2+q(\theta'',\Sigma)^2\leq A^2.
\end{equation}
Inserting the two inequalities \eqref{AMQM1} into \eqref{AMQM3} yields the claim of the present lemma.
\end{proof}

Let us compare the conditions to obtain the vanishing of the variance leading term in the two- and three-dimensional settings. In the former, this occurs for certain subsets of circles \cite[Proposition 7.3]{rudwig} and more generally, for families of static curves \cite[appendix F]{roswig}. We were able to find specific examples of surfaces s.t. the variance leading term vanishes (e.g. spheres and hemispheres): it would be interesting to find, if it exists, a more general family of surfaces satisfying this condition.

\appendix

\section{Proofs of auxiliary results}
\label{appa}
\noindent
In this appendix, we prove several auxiliary propositions and lemmas.
\begin{proof}[Proof of Proposition \ref{detnl}]
	We follow the approach of \cite[Lemma 3.1]{rudwi2}. Applying Proposition \ref{coarea} with $X=\Sigma$, $Y=\mathbb{R}$,
	$\varphi=G$, 
	and
	\begin{equation*}
	g(\sigma):=\frac{1}{2\epsilon}\chi\left(\frac{G(\sigma)}{\epsilon}\right)
	\end{equation*}
	yields the equality
	\begin{equation}
	\label{coareaapp}
	\int_\Sigma \frac{1}{2\epsilon}\chi\left(\frac{G(\sigma)}{\epsilon}\right)
	|\nabla_\Sigma G(\sigma)|d\sigma
	=
	\int_\mathbb{R}
	\left[
	\int_{G(\sigma)=y}
	\frac{1}{2\epsilon}\chi\left(\frac{G(\sigma)}{\epsilon}\right)
	dh_1\sigma
	\right]
	dy.
	\end{equation}
	Substituting \eqref{coareaapp} into \eqref{epslen}, we obtain
	\begin{equation*}
	\mathcal{L}_\epsilon
	=
	\frac{1}{2\epsilon}
	\int_{-\epsilon}^{\epsilon}
	h_1\{\sigma : G(\sigma)=y\}dy.
	\end{equation*}
	
	By assumption, $G$ satisfies $G(\sigma)=0 \Rightarrow \nabla_\Sigma G(\sigma)\neq 0$ for every $\sigma\in\Sigma$, hence the function $y\mapsto h_1(G^{-1}\{y\})$ is continuous at $y=0$, so that, by the fundamental theorem of calculus,
	\begin{equation*}
	\lim_{\epsilon\to 0}\mathcal{L}_\epsilon
	=
	\lim_{\epsilon\to 0}\frac{1}{2\epsilon}
	\int_{-\epsilon}^{\epsilon}
	h_1(\{\sigma : G(\sigma)=y\})dy
	=
	h_1(G^{-1}(0))=\mathcal{L},
	\end{equation*}
	proving \eqref{deterministic}.
\end{proof}

\begin{proof}[Proof of Lemma \ref{unifbdd}]
By the definition \eqref{epslen},
\begin{equation*}
\mathcal{L}_\epsilon(F,\Sigma)
=
\frac{1}{2\epsilon}
\int_\Sigma
\chi\left(\frac{F(\sigma)}{\epsilon}\right)
|\nabla_\Sigma F(\sigma)|d\sigma.
\end{equation*}
As the surface gradient $\nabla_\Sigma F$ is the projection of $\nabla F$ on $T_\sigma\Sigma$,
\begin{equation}
\label{unifbddeqn}
\mathcal{L}_\epsilon(F,\Sigma)
\leq
\frac{1}{2\epsilon}
\int_\Sigma
\chi\left(\frac{F(\sigma)}{\epsilon}\right)
|\nabla F(\sigma)|d\sigma
\leq
\frac{1}{2\epsilon}
\int_{\mathbb{T}^3}
\chi\left(\frac{F(x)}{\epsilon}\right)
|\nabla F(x)|dx.
\end{equation}
In \cite[Lemma 3.2]{rudwi2}, it was shown that the quantity
\begin{equation*}
Z_\epsilon(F):=\frac{1}{2\epsilon}
\int_{\mathbb{T}^3}
\chi\left(\frac{F(x)}{\epsilon}\right)
|\nabla F(x)|dx
\end{equation*}
satisfies the uniform bound
\begin{equation*}
Z_\epsilon(F)
\leq
18\sqrt{m}.
\end{equation*}
Substituting the latter bound into \eqref{unifbddeqn}, we obtain the claim of the present lemma. 
\end{proof}

\begin{proof}[Proof of Lemma \ref{lecovmat}]
We show a few of the computations for the covariance matrix $\Phi$, the remaining being similar or following immediately from the symmetry of $\Phi$. Firstly,
\begin{align*}
\phi_{23}:=\mathbb{E}[F(\sigma')a_1(\sigma)]&=\mathbb{E}[F(\sigma')((\nabla F)_1-\langle\nabla F,\overrightarrow{n}\rangle n_1)]
\\&=\langle D,(n_2^2+n_3^2,-n_1n_2,-n_1n_3)\rangle,
\end{align*}
with $D$ as in Definition \ref{defdh}. By a similar computation,
\begin{equation*}
\phi_{24}:=\mathbb{E}[F(\sigma')a_2(\sigma)]=\langle D,(-n_1n_2,n_1^2+n_3^2,-n_2n_3)\rangle,
\end{equation*}
so that, recalling the definitions of the matrices $\Omega$ \eqref{omegafull} and $L$ \eqref{matL}, we may write
\begin{equation*}
\begin{pmatrix}
\phi_{23} & \phi_{24}
\end{pmatrix}=D\Omega L.
\end{equation*}

Since $L^T\Omega L$ is the upper left $2\times 2$ block of $\Omega$, one has
\begin{equation*}
\begin{pmatrix}
\phi_{33} & \phi_{34} \\ \phi_{43} & \phi_{44}
\end{pmatrix}=ML^T\Omega L.
\end{equation*}
Further,
\begin{align*}
\phi_{36}:&=\mathbb{E}[a_1(\sigma)a_2(\sigma')]\\&=\mathbb{E}[((\nabla F)_1-\langle\nabla F,\overrightarrow{n}\rangle n_1)(\sigma)\cdot((\nabla F)_2-\langle\nabla F,\overrightarrow{n}\rangle n_2)(\sigma')]
\\&=H_{11}(-n'_1n'_2)(n_2^2+n_3^2)
+H_{12}((n_2^2+n_3^2)({n'_1}^2+{n'_3}^2)+n_1n'_1n_2n'_2)
\\&+H_{13}((-n'_2n'_3)(n_2^2+n_3^2)+n_1n'_1n'_2n_3)
+H_{22}(-n_1n_2)({n'_1}^2+{n'_3}^2)
\\&+H_{23}((-n_1n_3)({n'_1}^2+{n'_3}^2)+n_1n_2n'_2n'_3)
+H_{33}(n_1n'_2n_3n'_3).
\end{align*}
Similarly, one computes $\phi_{35},\phi_{45},\phi_{46}$ to see that
\begin{equation*}
\begin{pmatrix}
\phi_{35} & \phi_{36} \\ \phi_{45} & \phi_{46}
\end{pmatrix}=-L^T\Omega H\Omega'L.
\end{equation*}
\end{proof}

\begin{proof}[Proof of Lemma \ref{XY}]
	The matrix
	\begin{equation*}
	I_4+
	\begin{pmatrix}
	X & Y \\ Y' & X'
	\end{pmatrix}
	\end{equation*}
	is a covariance matrix, hence, by Cauchy-Schwartz, to show \eqref{XYl1} it will suffice to bound the diagonal entries of $X,X'$. In turn, this may be done by noting that the diagonal entries of a covariance matrix are positive, and that the entries of $X,X'$ are negative (see Definition \ref{XYdef}).
\end{proof}

Next, we compute the Taylor expansion of a perturbed Gaussian covariance matrix, establishing Lemma \ref{lemma5.1}. We employ Berry's elegant method \cite{berry2}, rather than computing the various derivatives by brute force, which would result in a longer computation.
\begin{proof}[Proof of Lemma \ref{lemma5.1}]
Here we outline the main steps and refer the reader to the more detailed \cite[Lemma 5.1]{krkuwi} and \cite[Lemma 5.8]{benmaf}. We begin by writing
\begin{equation}
\label{intxi}
\mathbb{E}[|(\hat{W_1},\hat{W_2})|\cdot|(\hat{W_3},\hat{W_4})|]=
\frac{1}{2\pi}\iint_{\mathbb{R}_+^2}
\xi(t,s)
\frac{dtds}{(ts)^{3/2}}
\end{equation}
with
\begin{equation*}
\xi(t,s):=(\eta(0,0)-\eta(t,0)-\eta(0,s)+\eta(t,s))
\end{equation*}
and
\begin{gather*}
\eta_{X,X',Y,Y'}(t,s)=
\frac{1}{\sqrt{\det{(I_4+J)}}}
\end{gather*}
where
\begin{equation*}
I_4+J=
\begin{pmatrix}
(1+t)I_2+tX & \sqrt{ts}Y \\ \sqrt{ts}Y' & (1+s)I_2+sX'
\end{pmatrix}.
\end{equation*}

We perform the Taylor expansion of $\det(I_4+J)^{-1/2}$ using the formula for the determinant of a block matrix:
\begin{multline*}
\det(I_4+J)^{-1/2}
\\=\det((1+t)I_2+tX)^{-1/2}
\cdot
\det[(1+s)I_2+sX'-tsY'((1+t)I_2+tX)^{-1}Y]^{-1/2}
\end{multline*}
so that \footnote{All error terms in \eqref{eta} are controlled by $X^2+X'^2+Y^4+Y'^4$: for instance, $YXY'\ll\max\{XY^2, XY'^2\}\ll\max\{X^2, Y^4, Y'^4\}$.}
\begin{multline}
\label{eta}
\eta_{X,X',Y,Y'}(t,s)=\frac{1}{(1+t)(1+s)}
-\frac{1}{2}\frac{t}{(1+t)^2(1+s)}tr(X)
\\-\frac{1}{2}\frac{s}{(1+t)(1+s)^2}tr(X')
+\frac{1}{2}\frac{ts}{(1+t)^2(1+s)^2}tr(Y'Y)
\\+O(X^2+X'^2+Y^4+Y'^4).
\end{multline}
This leads to
\begin{align}
\label{xi}
\notag
\xi_{X,X',Y,Y'}(t,s)
&=\frac{ts}{(1+t)(1+s)}
+\frac{1}{2}\frac{ts}{(1+t)^2(1+s)}tr(X)
\\\notag&+\frac{1}{2}\frac{ts}{(1+t)(1+s)^2}tr(X')
+\frac{1}{2}\frac{ts}{(1+t)^2(1+s)^2}tr(Y'Y)
\\&+O\left(\min(t,1)\min(s,1)(X^2+X'^2+Y^4+Y'^4)\right),
\end{align}
where we improved the error term in \eqref{eta} by noting that $\xi(t,s)=O(ts)$. On inserting \eqref{xi} into \eqref{intxi} and integrating we obtain
\begin{multline*}
\mathbb{E}[|(\hat{W_1},\hat{W_2})|\cdot|(\hat{W_3},\hat{W_4})|]
\\=\frac{\pi}{2}\left(1+\frac{tr(X)}{4}+\frac{tr(X')}{4}+\frac{tr(Y'Y)}{8}\right)
+O(X^2+X'^2+Y^4+Y'^4).
\end{multline*}
\end{proof}
We conclude the appendix with the following proof.
\begin{proof}[Proof of Lemma \ref{bdsingl}]
We begin by invoking Lemma \ref{leK2} to write
\begin{equation}
\label{K2te}
\tilde{K}_{2;\Sigma}=MK_{2;\Sigma}=\frac{M}{2\pi\sqrt{1-r^2}}\mathbb{E}[|(\hat{W_1},\hat{W_2})|\cdot|(\hat{W_3},\hat{W_4})|]
\end{equation}
where
\begin{equation*} \hat{W}=(\hat{W_1},\hat{W_2},\hat{W_3},\hat{W_4})\sim\mathcal{N}(0,\hat{\Theta}),
\end{equation*}
and $\hat{\Theta}$ is given by \eqref{hattheta}. As $\sigma'\to\sigma$ the expectation in \eqref{K2te} converges to a positive constant, so that on each diagonal region $V_{i,i}$
\begin{equation*}
\tilde{K}_{2;\Sigma}\asymp\frac{m}{\sqrt{1-r^2}}\asymp\frac{\sqrt{m}}{|\sigma-\sigma'|}
\end{equation*}
\footnote{The notation $f\asymp g$ means $g\ll f\ll g$.} via a Taylor expansion. Therefore,
\begin{equation}
\label{K2bound}
\iint_{V_{i,i}}\tilde{K}_{2;\Sigma}d\sigma d\sigma'\asymp\sqrt{m}\iint_{V_{i,i}}\frac{d\sigma d\sigma'}{|\sigma-\sigma'|}\ll_\Sigma\sqrt{m}\delta^3\asymp\frac{1}{m}.
\end{equation}
By Kac-Rice and the Cauchy-Schwartz inequality,
\begin{multline}
\label{cs}
\left|\iint_{V_{i,j}}\tilde{K}_{2;\Sigma}d\sigma d\sigma'\right|=\left|\mathbb{E}[\mathcal{L}_i\mathcal{L}_j]\right|\leq\sqrt{\mathbb{E}[\mathcal{L}_i^2]\mathbb{E}[\mathcal{L}_j^2]}\\=\left(\iint_{V_{i,i}}\tilde{K}_{2;\Sigma}d\sigma d\sigma'\iint_{V_{j,j}}\tilde{K}_{2;\Sigma}d\sigma d\sigma'\right)^{1/2}
\end{multline}
where
\begin{equation*}
\mathcal{L}_i:=h_1(\mathcal{A}_F \cap \Sigma_i)
\end{equation*}
(recall the notation $h_1$ for Hausdorff measure). By \eqref{K2bound} and \eqref{cs} one obtains
\begin{equation*}
\iint_{S}\tilde{K}_{2;\Sigma}(\sigma,\sigma')d\sigma d\sigma'
\ll
\sum_{V_{i,j} \text{ sing}} \frac{1}{m}
\asymp
\frac{1}{m}\cdot\frac{\text{meas}(S)}{\delta^4}
\ll
m\cdot\mathcal{R}_4(m),
\end{equation*}
where we also applied Lemma \ref{Sbound}. The proof of Lemma \ref{bdsingl} is complete.
\end{proof}

\addcontentsline{toc}{section}{References}
\bibliographystyle{plain}
\bibliography{bibfile}

\begin{thebibliography}{10}

\bibitem{adltay}
Robert~J. Adler and Jonathan~E. Taylor.
\newblock {\em Random fields and geometry}.
\newblock Springer Monographs in Mathematics. Springer, New York, 2007.

\bibitem{azawsc}
Jean-Marc Aza{\"{\i}}s and Mario Wschebor.
\newblock {\em Level sets and extrema of random processes and fields}.
\newblock John Wiley \& Sons, Inc., Hoboken, NJ, 2009.

\bibitem{benmaf}
Jacques Benatar and Riccardo~W. Maffucci.
\newblock Random waves on $\mathbb{T}^3$: Nodal area variance and lattice point
  correlations.
\newblock {\em International Mathematics Research Notices}, to appear.

\bibitem{berry2}
Michael~V Berry.
\newblock Statistics of nodal lines and points in chaotic quantum billiards:
  perimeter corrections, fluctuations, curvature.
\newblock {\em Journal of Physics A: Mathematical and General}, 35(13):3025,
  2002.

\bibitem{brnoda}
Jean Bourgain and Ze{\'e}v Rudnick.
\newblock On the nodal sets of toral eigenfunctions.
\newblock {\em Invent. Math.}, 185(1):199--237, 2011.

\bibitem{brgafa}
Jean Bourgain and Ze{\'e}v Rudnick.
\newblock Restriction of toral eigenfunctions to hypersurfaces and nodal sets.
\newblock {\em Geom. Funct. Anal.}, 22(4):878--937, 2012.

\bibitem{bosaru}
Jean Bourgain, Peter Sarnak, and Ze{\'e}v Rudnick.
\newblock Local statistics of lattice points on the sphere.
\newblock {\em Modern Trends in Constructive Function Theory, Contemp. Math},
  661:269--282, 2012.

\bibitem{cammar}
Valentina Cammarota.
\newblock Nodal area distribution for arithmetic random waves.
\newblock {\em arXiv preprint arXiv:1708.07679}, 2017.

\bibitem{canhan}
Yaiza Canzani and Boris Hanin.
\newblock Local universality for zeros and critical points of monochromatic
  random waves.
\newblock {\em arXiv preprint arXiv:1610.09438}, 2016.

\bibitem{cantot}
Yaiza Canzani and John~A Toth.
\newblock Nodal sets of {S}chr{\"o}dinger eigenfunctions in forbidden regions.
\newblock {\em Annales Henri Poincar{\'e}}, 17(11):3063--3087, 2016.

\bibitem{cheng1}
Shiu-Yuen Cheng.
\newblock Eigenfunctions and nodal sets.
\newblock {\em Commentarii Mathematici Helvetici}, 51(1):43--55, 1976.

\bibitem{ciller}
Javier Cilleruelo.
\newblock The distribution of the lattice points on circles.
\newblock {\em J. Number Theory}, 43(2):198--202, 1993.

\bibitem{cralea}
Harald Cram{\'e}r and M.~R. Leadbetter.
\newblock {\em Stationary and related stochastic processes. {S}ample function
  properties and their applications}.
\newblock John Wiley \& Sons, Inc., New York-London-Sydney, 1967.

\bibitem{daven1}
Harold Davenport.
\newblock {\em The higher arithmetic: an introduction to the theory of
  numbers}.
\newblock Cambridge University Press, 1999.

\bibitem{docarm}
Manfredo~P. do~Carmo.
\newblock {\em Differential geometry of curves and surfaces}.
\newblock Prentice-Hall, Inc., Englewood Cliffs, N.J., 1976.
\newblock Translated from the Portuguese.

\bibitem{duke88}
William Duke.
\newblock Hyperbolic distribution problems and half-integral weight maass
  forms.
\newblock {\em Inventiones mathematicae}, 92(1):73--90, 1988.

\bibitem{dukesp}
William Duke and Rainer Schulze-Pillot.
\newblock Representation of integers by positive ternary quadratic forms and
  equidistribution of lattice points on ellipsoids.
\newblock {\em Inventiones Mathematicae}, 99(1):49--57, 1990.

\bibitem{elhtot}
Layan El-Hajj and John~A Toth.
\newblock Intersection bounds for nodal sets of planar {N}eumann eigenfunctions
  with interior analytic curves.
\newblock {\em Journal of Differential Geometry}, 100(1):1--53, 2015.

\bibitem{erdhal}
Paul Erd{\"o}s and Richard~Roxby Hall.
\newblock On the angular distribution of gaussian integers with fixed norm.
\newblock {\em Discrete mathematics}, 200(1-3):87--94, 1999.

\bibitem{equidi}
Laura Fainsilber, P{\"a}r Kurlberg, and Bernt Wennberg.
\newblock Lattice points on circles and discrete velocity models for the
  {B}oltzmann equation.
\newblock {\em SIAM J. Math. Anal.}, 37(6):1903--1922 (electronic), 2006.

\bibitem{fede59}
Herbert Federer.
\newblock Curvature measures.
\newblock {\em Transactions of the American Mathematical Society},
  93(3):418--491, 1959.

\bibitem{golfom}
Elena~Petrovna Golubeva and Oleg~Mstislavovich Fomenko.
\newblock Asymptotic distribution of integral points on the three-dimensional
  sphere.
\newblock {\em Zapiski Nauchnykh Seminarov POMI}, 160:54--71, 1987.

\bibitem{harwri}
G.~H. Hardy and E.~M. Wright.
\newblock {\em An introduction to the theory of numbers}.
\newblock The Clarendon Press, Oxford University Press, New York, fifth
  edition, 1979.

\bibitem{iwniec}
Henryk Iwaniec.
\newblock Fourier coefficients of modular forms of half-integral weight.
\newblock {\em Inventiones mathematicae}, 87(2):385--401, 1987.

\bibitem{jarnik}
Vojt\v{e}ch Jarn{\'{\i}}k.
\newblock \"{U}ber die {G}itterpunkte auf konvexen {K}urven.
\newblock {\em Math. Z.}, 24(1):500--518, 1926.

\bibitem{krkuwi}
Manjunath Krishnapur, P{\"a}r Kurlberg, and Igor Wigman.
\newblock Nodal length fluctuations for arithmetic random waves.
\newblock {\em Ann. of Math. (2)}, 177(2):699--737, 2013.

\bibitem{kurwig}
P{\"a}r Kurlberg and Igor Wigman.
\newblock On probability measures arising from lattice points on circles.
\newblock {\em Mathematische Annalen}, 367(3-4):1057--1098, 2017.

\bibitem{leten1}
Thomas Letendre.
\newblock Expected volume and euler characteristic of random submanifolds.
\newblock {\em Journal of Functional Analysis}, 270(8):3047--3110, 2016.

\bibitem{leten2}
Thomas Letendre.
\newblock Variance of the volume of random real algebraic submanifolds.
\newblock {\em Transactions of the American Mathematical Society}, 2017.

\bibitem{maff3d}
Riccardo~W. Maffucci.
\newblock Nodal intersections for random waves against a segment on the
  3-dimensional torus.
\newblock {\em Journal of Functional Analysis}, 272(12):5218--5254, 2017.

\bibitem{maff2d}
Riccardo~W. Maffucci.
\newblock Nodal intersections of random eigenfunctions against a segment on the
  2-dimensional torus.
\newblock {\em Monatshefte f{\"u}r Mathematik}, 183(2):311--328, 2017.

\bibitem{orruwi}
Ferenc Oravecz, Ze{\'e}v Rudnick, and Igor Wigman.
\newblock The {L}eray measure of nodal sets for random eigenfunctions on the
  torus.
\newblock {\em Annales de l'Institut Fourier}, 58(1):299--335, 2008.

\bibitem{pascbo}
Andrzej Palczewski, Jacques Schneider, and Alexandre~V. Bobylev.
\newblock A consistency result for a discrete-velocity model of the {B}oltzmann
  equation.
\newblock {\em SIAM J. Numer. Anal.}, 34(5):1865--1883, 1997.

\bibitem{roswig}
Maurizia Rossi and Igor Wigman.
\newblock Asymptotic distribution of nodal intersections for arithmetic random
  waves.
\newblock {\em Nonlinearity}, 31(10):4472, 2018.

\bibitem{rudwi2}
Ze{\'e}v Rudnick and Igor Wigman.
\newblock On the volume of nodal sets for eigenfunctions of the {L}aplacian on
  the torus.
\newblock {\em Ann. Henri Poincar\'e}, 9(1):109--130, 2008.

\bibitem{rudwig}
Ze\'ev Rudnick and Igor Wigman.
\newblock Nodal intersections for random eigenfunctions on the torus.
\newblock {\em Amer. J. Math.}, 138(6):1605--1644, 2016.

\bibitem{ruwiye}
Ze\'ev Rudnick, Igor Wigman, and Nadav Yesha.
\newblock Nodal intersections for random waves on the 3-dimensional torus.
\newblock {\em Ann. Inst. Fourier (Grenoble)}, 66(6):2455--2484, 2016.

\bibitem{serne2}
Edoardo Sernesi.
\newblock {\em Geometria 2: Edoardo Sernesi}.
\newblock Bollati Boringheri, 1994.

\bibitem{stein1}
Elias~M Stein.
\newblock Oscillatory integrals in fourier analysis.
\newblock {\em Beijing lectures in harmonic analysis (Beijing, 1984)},
  112:307--355, 1986.

\bibitem{totzel}
John~A. Toth and Steve Zelditch.
\newblock Counting nodal lines which touch the boundary of an analytic domain.
\newblock {\em J. Differential Geom.}, 81(3):649--686, 2009.

\end{thebibliography}

\Addresses

\end{document}